\begin{document}

 \baselineskip 16.6pt
\hfuzz=6pt

\widowpenalty=10000

\newtheorem{cl}{Claim}
\newtheorem{theorem}{Theorem}[section]
\newtheorem{proposition}[theorem]{Proposition}
\newtheorem{coro}[theorem]{Corollary}
\newtheorem{lemma}[theorem]{Lemma}
\newtheorem{definition}[theorem]{Definition}
\newtheorem{assum}{Assumption}[section]
\newtheorem{example}[theorem]{Example}
\newtheorem{remark}[theorem]{Remark}
\renewcommand{\theequation}
{\thesection.\arabic{equation}}

\def\SL{\sqrt H}

\newcommand{\mar}[1]{{\marginpar{\sffamily{\scriptsize
        #1}}}}

\newcommand{\as}[1]{{\mar{AS:#1}}}

\newcommand\R{\mathbb{R}}
\newcommand\RR{\mathbb{R}}
\newcommand\CC{\mathbb{C}}
\newcommand\NN{\mathbb{N}}
\newcommand\ZZ{\mathbb{Z}}
\def\RN {\mathbb{R}^n}
\renewcommand\Re{\operatorname{Re}}
\renewcommand\Im{\operatorname{Im}}

\newcommand{\mc}{\mathcal}
\newcommand\D{\mathcal{D}}
\def\hs{\hspace{0.33cm}}
\newcommand{\la}{\alpha}
\def \l {\alpha}
\newcommand{\eps}{\varepsilon}
\newcommand{\pl}{\partial}
\newcommand{\supp}{{\rm supp}{\hspace{.05cm}}}
\newcommand{\x}{\times}
\newcommand{\lag}{\langle}
\newcommand{\rag}{\rangle}

\newcommand\wrt{\,{\rm d}}

\title[]{The Schr\"odinger equation   in $L^p$ spaces
for  operators with \\[2pt] heat  kernel satisfying Poisson type  bounds}

\author{Peng Chen, Xuan Thinh Duong, Zhijie Fan, Ji Li, Lixin Yan}

 \address{Peng Chen, Department of Mathematics, Sun Yat-sen
University, Guangzhou, 510275, P.R. China}
\email{chenpeng3@mail.sysu.edu.cn}

 \address{Xuan Thinh Duong, Department of Mathematics, Macquarie University, NSW 2109, Australia}
\email{xuan.duong@mq.edu.au}

 \address{Zhijie Fan, Department of Mathematics, Sun Yat-sen
University, Guangzhou, 510275, P.R. China}
\email{fanzhj3@mail2.sysu.edu.cn}

\address{Department of Mathematics, Macquarie University, NSW, 2109, Australia}
\email{ji.li@mq.edu.au}

\address{Department of Mathematics, Sun Yat-sen (Zhongshan) University, Guangzhou, 510275, P.R. China
 and Department of Mathematics, Macquarie University, NSW 2109, Australia}
\email{mcsylx@mail.sysu.edu.cn}

  \date{\today}
 \subjclass[2010]{42B37, 35J10,  47F05}
\keywords{ Sharp  $L^p$ estimate, Schr\"odinger  equation, elliptic  operator, heat kernel,  space of homogeneous type}

\begin{abstract}
Let $L$ be a non-negative self-adjoint operator acting on $L^2(X)$
where $X$ is a space of homogeneous type with a dimension $n$.
In this paper, we study  sharp endpoint $L^p$-Sobolev  estimates  for the solution of
the initial value  problem for the Schr\"odinger equation\,  $i  \partial_t u +  L u=0 $  and show that
  for all $f\in L^p(X), 1<p<\infty,$
 \begin{eqnarray*}
 \left\| e^{itL} (I+L)^{-{\sigma n}} f\right\|_{p} \leq  C(1+|t|)^{\sigma n} \|f\|_{p},
 \ \ \ t\in{\mathbb R}, \ \ \ \sigma\geq  \big|{1\over  2}-{1\over  p}\big|,
\end{eqnarray*}
where
the semigroup $e^{-tL}$ generated by $L$  satisfies  a Poisson type upper bound.
\end{abstract}

\maketitle


\section{Introduction}
\subsection{Background and main result}
We will consider the initial value  problem for the Schr\"odinger equation
\begin{eqnarray} \label{Sch}
\left\{
\begin{array}{ll}
    i { \partial_t u (x,t)  } +  Lu(x, t)=0 \quad x\in \mathbb R^n,\ \  t>0,  \\[5pt]
 u(x, 0)=f(x),
\end{array}
\right.
\end{eqnarray}
where $L $ is
a non-negative self-adjoint operator on $L^2(\mathbb R^n)$
with the heat kernel $h_t(x,y)$ of the semigroup $e^{-tL}$ satisfying the Poisson type upper bound: there exist positive
 constants $C$ and $m,a>0$ such that for all $x,y\in \mathbb R^n$, $t>0$,
\begin{eqnarray}\label{heat kernel bound}
|h_{t} (x,y)|\leq  C t^{-\frac{n}{m}}\left(1+\frac{|x-y|}{t^{1/m}}\right)^{-n-a}.
\end{eqnarray}
The aim of this paper is to  focus on the $L^p$ estimate for the solution of this Schr\"odinger
equation with the sharp index. Our main result in this paper is the following.

\begin{theorem}\label{main0}
Suppose that $L $ is
a non-negative self-adjoint operator on $L^2(\mathbb R^n)$ satisfying the above heat kernel upper bound \eqref{heat kernel bound}
with  $a>\big[\frac{n}{2}\big]+1$.
Then for any $p\in (1,\infty)$, there exists a
 constant $C_{p}>0$, independent of $t$ and $f$, such that
\begin{align}\label{Lp main0}
\|e^{itL}(I+L)^{-\sigma n}f\|_{L^{p}(\mathbb R^n)}\leq C_{p}(1+|t|)^{\sigma n}\|f\|_{L^{p}(\mathbb R^n)}\quad {\rm for\ any\ }
 \sigma\geq \sigma_{p}:=\Big|{1\over2} -{1\over p}\Big|.
\end{align}
\end{theorem}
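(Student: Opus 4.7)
The plan is to reduce the theorem to a single endpoint estimate and then recover the entire range by complex interpolation. Since $e^{itL}$ is an isometry on $L^2(\mathbb R^n)$, the case $p=2$ (i.e.\ $\sigma=0$) is immediate, and by duality it suffices to treat $p\in(1,2)$. I would apply Stein's interpolation theorem for analytic families to $T_z := e^{itL}(I+L)^{-zn}$ on the strip $0 \leq \Re z \leq 1/2$. On the line $\Re z=0$, the spectral theorem gives $\|T_{is}\|_{L^2\to L^2}=1$ uniformly in $s$ and $t$. On the line $\Re z = 1/2$ the task reduces to the endpoint bound
$$\|T_{1/2+is}\|_{H^1_L\to L^1}\leq C(1+|s|)^N(1+|t|)^{n/2},$$
where $H^1_L$ denotes the Hardy space adapted to $L$ (defined, e.g., via the square function of $e^{-tL}$). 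Stein interpolation yields $\|T_{\theta/2}\|_{L^{p_\theta}\to L^{p_\theta}}\lesssim (1+|t|)^{\theta n/2}$ with $1/p_\theta = (1+\theta)/2$, which is exactly the claimed bound at the sharp exponent $\sigma = |1/2 - 1/p|$; duality covers $p\in(2,\infty)$, and larger $\sigma$ follows by composition with bounded powers of $(I+L)^{-1}$.

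To establish the $H^1_L\to L^1$ endpoint I would use the molecular (atomic) characterization of $H^1_L$ adapted to operators whose heat semigroup obeys only Poisson-type bounds. It suffices to show $\|T_{1/2}a\|_{L^1}\lesssim (1+|t|)^{n/2}$ uniformly over atoms $a = L^{1/2}b$ with $\supp b \subset B := B(x_B, r_B)$ and $\|b\|_{L^2}\le r_B|B|^{-1/2}$. Splitting the integral at a controlled enlargement $B^* = c_0 B$, the local part $\int_{B^*}|T_{1/2}a|$ is immediate by Cauchy--Schwarz together with $\|T_{1/2}\|_{L^2\to L^2}\le 1$ and the atom normalization, producing an $O(1)$ contribution which is absorbed into the right-hand side.

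The core of the argument is the far part $\int_{\mathbb R^n\setminus B^*}|T_{1/2}a|$. Here I would dyadically decompose the spectral multiplier $\phi(\lambda):=e^{it\lambda}(1+\lambda)^{-n/2}$ as $\phi = \sum_{k\geq 0}\phi_k$ with each $\phi_k$ localized where $\lambda\sim 2^k$, so that $T_{1/2} = \sum_k \phi_k(L)$. Representing each $\phi_k(L)$ as a smooth transform of the heat semigroup and substituting the hypothesis \eqref{heat kernel bound}, one should obtain a pointwise kernel estimate of the form
$$|K_{\phi_k(L)}(x,y)|\lesssim (1+|t|)^{n/2}\,2^{kn/m}\bigl(1+2^{k/m}|x-y|\bigr)^{-n-a'}$$
for some $a' \leq a$ chosen strictly greater than $n/2$, which is possible exactly because $a > [n/2]+1$. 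Integrating these kernel bounds across scales against the cancellation encoded in $a = L^{1/2}b$ and summing in $k$ then yields the required uniform $(1+|t|)^{n/2}$ bound on the far integral.

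The main obstacle will be producing these off-diagonal kernel estimates under only polynomial heat-kernel decay: the Davies--Gaffney and finite-propagation-speed techniques exploited in \cite{CDLY1} relied crucially on exponential decay, so one cannot convert compact spectral support of $\phi_k$ into Gaussian kernel tails. Instead, the Poisson-type decay has to be propagated through the functional calculus by convolution-type arguments in which smoothness and support control of $\phi_k$ (carrying the $t$-dependence) translate into sufficient spatial decay of $K_{\phi_k(L)}$; the quantitative margin $a > [n/2]+1$ is essentially the smallest polynomial-decay threshold that, after the dyadic summation and integration against the atom, leaves summable tails and yields the sharp $(1+|t|)^{n/2}$ growth.
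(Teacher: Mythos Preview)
Your global architecture---reduce to an endpoint via analytic interpolation against the trivial $L^2$ bound, and test the endpoint on atoms/molecules---matches the paper. The paper also interpolates an analytic family $T_z$ between $L^2$ and an $H^1_L$ endpoint (Proposition~\ref{tool}), and that $H^1_L$ endpoint is likewise checked on molecules via a dyadic spectral decomposition.

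The gap is in the heart of the argument: your proposed pointwise kernel bound
\[
|K_{\phi_k(L)}(x,y)|\lesssim (1+|t|)^{n/2}\,2^{kn/m}\bigl(1+2^{k/m}|x-y|\bigr)^{-n-a'}
\]
is exactly the step the paper says cannot be obtained from a Poisson-type heat bound by ``convolution-type arguments.'' With only polynomial decay of $h_t$, neither Phragm\'en--Lindel\"of nor the Davies/Duong--Robinson subharmonic-extension method yields usable decay for $e^{-zL}$ as $\arg z\to\pi/2$, so representing the oscillatory piece $e^{it\lambda}$ through the heat semigroup and ``substituting \eqref{heat kernel bound}'' does not produce a kernel with the claimed $(1+2^{k/m}|x-y|)^{-n-a'}$ tail and the sharp $(1+|t|)^{n/2}$ growth. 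Your last paragraph names the obstacle but does not supply a mechanism.

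What the paper actually does is abandon pointwise kernel bounds altogether in favour of $L^2\!\to\!L^2$ off-diagonal estimates (Proposition~\ref{p2estimate22}). The oscillatory factor is handled by writing $e^{itL}F(L)=R_\lambda^{\,2^{\kappa_0+1}-2}e^{itL}\widetilde F(L)$ with $R_\lambda=(I+\lambda^{-1}L)^{-1}$ and controlling commutators ${\rm Ad}^k_{\ell,r}\bigl(R_\lambda^{\,2^{j+1}-2}e^{itL}\bigr)$ via Lemmas~\ref{adresolvent1}--\ref{commutatorestimate}; this is where the factor $(1+\lambda|t|)^{\kappa_0}$ emerges, and it is combined with an amalgam-block decomposition of $X$ (Lemma~\ref{oscillatory2}) rather than with kernel integration. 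Two smaller points: the molecules used are $a=L^Mb$ with large $M$, not $a=L^{1/2}b$; and the endpoint proved is $H^1_L\to H^1_L$, with the imaginary-power factor $(I+L)^{iyn/2}$ controlled separately on $H^1_L$ via Theorem~\ref{spectralmultiplier}.
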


We point out that when $L$ is the standard Laplacian on $\mathbb R^n$,
the sharp endpoint $L^p$-Sobolev estimate was first studied  by Miyachi \cite{M1,M2}. Later it has been extensively studied in
different types of Schr\"odinger equation in $\mathbb R^n$, see for example \cite{Br, Hi1, Hi2, Hi3}, where the key tool is Fourier transform.
Recently, Jensen and Nakamura \cite{JN94,Jensen} developed an idea on obtaining the $L^p$ estimates of the Sch\"odinger equation
 by using the commutator method. D'Ancola and Nicola \cite{Dancola} applied this method to prove  the uniform local $L^p$
 estimates for the solution of this Schr\"odinger problem,  that is, they obtained the sharp estimate of $\|e^{itL} \varphi(L) f\|_{L^p}$ with optimal growth in time and optimal regularity loss, where $\varphi$ is a compactly supported function in $C^\infty(\mathbb R)$ and the heat kernel upper bound of the
 operator $L$ can be relaxed from exponential decay to polynomial decay. See also \cite{BDN} for the extension of \cite{Dancola}
  to space of homogeneous type. However, their results cannot be applied to proving \eqref{Lp main0}.
In \cite{CDLY1}, the first, second, fourth and fifth authors  of this paper studied the Schr\"odinger equation
  for nonnegative
self-adjoint operator $L$ whose heat kernel satisfies  the Gaussian upper bound, and obtained the sharp estimate \eqref{Lp main0}.
The method in \cite{CDLY1} is to use the sharp maximal function estimate, which depends heavily on the important tool of function
calculus studied by Christ, Hebisch, McIntosh, Duong et., as well as Blunck and Kunstmann and so on, where the exponential decay
of the heat kernel  plays an essential role.

Our main result Theorem \ref{main0} reduces the kernel upper bound to pointwise Poisson bound, and gives the full range of the
 sharp $L^p$ estimate for the solution to the initial value problem for a Schr\"odinger equation.
To obtain this, we develop several new techniques comparing to the previous closely related results \cite{ CDLY1,Dancola,M1, M2}.
 We now explain these in the next subsection in details.

\subsection{Assumptions, framework and new techniques of the proof}

To show Theorem \ref{main0}, we will work on a more general setting in order to cover many important examples that are in the scope  of $\mathbb R^n$.
Throughout this paper, we assume that $X$ is a metric space, with a distance $d$ and  a nonnegative, Borel, doubling measure $\mu$  on $X$ satisfying $\mu(X)=+\infty$.

To be more precise, we first recall the basic setup for the metric space $X$.  Let $B(x,r)=\{y\in X:d(x,y)<r\}$ be the open ball
with center $x\in X$ and radius $r>0$ and let $V(x,r)=\mu(B(x,r))$, the volume of $B(x,r)$. We say that $(X,d,\mu)$ satisfies the doubling
property (see Chapter 3, \cite{Coifman}) if there exists a constant $C>0$ such that
\begin{align}\label{doubling0}
V(x,2r)\leq CV(x,r),\ \ \forall r>0,\ x\in X.
\end{align}
Then the doubling property implies the following strong homogeneity inequality,
\begin{align}\label{doubling}
V(x,\lambda r)\leq C\lambda^{n}V(x,r)
\end{align}
for some $C, n>0$ uniformly for all $\lambda\geq 1$ and $x\in X$.
In the Euclidean space with Lebesgue measure, $n$ is
the dimension of the space.
In our results, the critical index is always expressed in terms of the
homogeneous dimension $n$.
Note also that there  exist $c$ and $D, 0\leq D\leq n$ so that
\begin{equation}
V(y,r)\leq c\bigg( 1+{d(x,y)\over r}\bigg )^D V(x,r)
\label{e1.3}
\end{equation}
uniformly for all $x,y\in {  X}$ and $r>0$. Indeed, the
property (\ref{e1.3}) with
$D=n$ is a direct consequence of the triangle inequality with respect to the metric
$d$ and the strong homogeneity property. In the cases of Euclidean spaces
${\mathbb R}^n$ and Lie groups of polynomial growth, $D$ can be
chosen to be $0$.

Suppose that $L$ is  a non-negative self-adjoint operator on $L^2(X)$, one can formally define an Schr\"odinger group $e^{itL}$,
using the spectral theory for $L.$ Assume  that
 $L$ has a  spectral resolution:
$$
Lf=\int_0^{\infty}\lambda dE_L(\lambda) f, \ \ \ \ f\in L^2(X),
$$
where  $E_L(\lambda)$ is the projection-valued measure supported on the spectrum of $L$.
Then the  operator   $   e^{itL}$ is defined by
 \begin{equation}\label{e111}
 e^{itL}f =   \int_0^{\infty}    e^{it\lambda}dE_L(\lambda) f
   \end{equation}
 for $f\in L^2(X)$, and forms   the Schr\"odinger group.
   By the spectral theorem (\cite{Mc}),  the operator   $  e^{itL}$  is  continuous on $L^2(X)$.
 Assuming $f\in L^2(X)$, $u(x,t)=e^{itL}f$ solves  the following
  initial value  problem for the Schr\"odinger equation
\begin{eqnarray*}
\left\{
\begin{array}{ll}
    i { \partial_t u (x,t)  } +  Lu(x, t)=0 \quad x\in X,\ \  t>0, \\[5pt]
 u(x, 0)=f(x).
\end{array}
\right.
\end{eqnarray*}
 It is interesting to  investigate $L^p$-mapping properties  for
 the Schr\"odinger group $ e^{itL}$   on $L^p(X)$ for some $p, 1\leq p\leq \infty.$

We now introduce the polynomial off-diagonal estimate $({\rm PEV}_{p_{0},2}^{a,m})$ for $L$, which, when moving back to $\mathbb R^n$, is weaker than
the Poisson type decay \eqref{heat kernel bound}.

\begin{definition}\label{def PEV}
Let $L$ be a non-negative self-adjoint operator on $L^2(X)$ and $m> 0$, $a>0$. We say that
the semigroup $e^{-tL}$ generated by $L,$ satisfies the property $({\rm PEV}_{p_{0},2}^{a,m})$
  if there exists a constant $C>0$ such that for all  $x,y\in X$, $t>0$,
\begin{align*}\label{hh}
\tag{${\rm PEV}_{p_{0},2}^{a,m}$}\|\chi_{B(x,t^{1/m})}e^{-t L}V_{t^{1/m}}^{\sigma_{p_{0}}}
\chi_{B(y,t^{1/m})}\|_{p_{0}\rightarrow 2}\leq C\left(1+\frac{d(x,y)}{t^{1/m}}\right)^{-n-a},
\end{align*}
where $V_{t^{1/m}}^{\sigma_{p_{0}}}$  is  a pointwise multiplier operator defined by $V_{t^{1/m}}^{\sigma_{p_{0}}}f(x):=V(x,t^{1/m})^{\sigma_{p_{0}}}f(x)$.
\end{definition}

Note that if the semigroup $e^{-t L}$ has integral kernel $h_{t} (x,y)$
satisfying the following Poisson type upper bound:
\begin{eqnarray}\label{hh}
|h_{t} (x,y)|\leq  C V(x, t^{1/m})^{-1}\left(1+\frac{d(x,y)}{t^{1/m}}\right)^{-n-a}
\end{eqnarray}
 for all $t>0$ and all $x, y\in X$, then $L$ satisfies the property \eqref{inter} with $p=1$.
Based on the condition $({\rm PEV}_{p_{0},2}^{a,m})$, our main result is the following, which covers Theorem \ref{main0} when we
 restrict our $(X,d,\mu)$ to the setting of $\mathbb R^n$.

\begin{theorem}\label{main}
Suppose that $(X,d,\mu)$ is a space of homogeneous type with a dimension $n$ and that $L$ satisfies the property
$({\rm PEV}_{p_{0},2}^{\kappa,m})$ for some $1\leq p_{0}<2$, $m>0$ and $\kappa>\kappa_{0}:=\big[\frac{n}{2}\big]+1$. Then for any
$p\in (p_{0},p_{0}^{\prime})$, there exist a constant $C_{p}>0$, independent of $t$ and $f$, such that
\begin{align}\label{LL}
\|e^{itL}(I+L)^{-\sigma_{p}n}f\|_{L^{p}(X)}\leq C_{p}(1+|t|)^{\sigma_{p}n}\|f\|_{L^{p}(X)}.
\end{align}

 As a consequence, this estimate \eqref{LL} holds for all $1<p<\infty$ when the heat kernel of $L$
 satisfies a Poisson type upper bound \eqref{hh}  for $a>\kappa_{0}$.
\end{theorem}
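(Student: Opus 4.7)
The plan is to adapt the dyadic spectral-decomposition argument of \cite{CDLY1} to the Poisson-type setting, using the polynomial off-diagonal estimate $({\rm PEV}_{p_{0},2}^{\kappa,m})$ as the substitute for Gaussian or exponential kernel bounds. Since $e^{itL}$ is unitary on $L^2(X)$ the statement is trivial at $p=2$ with $\sigma_2=0$; moreover $\bigl(e^{itL}(I+L)^{-\sigma n}\bigr)^{\ast} = e^{-itL}(I+L)^{-\sigma n}$, so any strong-type bound established for $p\in(p_{0},2]$ dualises to the range $p\in[2,p_{0}')$. It therefore suffices to prove the estimate for $p$ close to $p_{0}$ and then interpolate with the trivial $L^2$ bound.

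Introduce a smooth dyadic partition of unity $1=\sum_{j\geq 0}\varphi_{j}$ on $[0,\infty)$, with $\varphi_{j}(\lambda)=\varphi(2^{-j}\lambda)$ for $j\geq 1$ and $\varphi$ supported in $[1/2,2]$, and decompose
\[
 e^{itL}(I+L)^{-\sigma_{p} n} \;=\; \sum_{j\geq 0} F_{j,t}(L), \qquad F_{j,t}(\lambda) \;:=\; \varphi_{j}(\lambda)\,e^{it\lambda}(1+\lambda)^{-\sigma_{p} n}.
\]
Each $F_{j,t}$ is a smooth bump of height $\sim 2^{-j\sigma_{p} n}$ modulated by an oscillatory factor of frequency $\sim|t|$. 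The key analytic step is to deduce from $({\rm PEV}_{p_{0},2}^{\kappa,m})$ a dyadic off-diagonal estimate of the shape
\[
 \bigl\|\chi_{B(x,2^{j/m})} F_{j,t}(L)\chi_{B(y,2^{j/m})}\bigr\|_{p_{0}\to 2}
 \;\lesssim\; V(x,2^{j/m})^{-\sigma_{p_{0}}}\, 2^{-j\sigma_{p} n}\,\bigl(1+|t|2^{j}\bigr)^{\kappa_{0}}\bigl(1+2^{-j/m}d(x,y)\bigr)^{-n-\kappa},
\]
obtained via a Mellin--Fourier representation of $F_{j,t}$ together with repeated commutation against $L$, in the spirit of \cite{Dancola,BDN}. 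Summing over $y$ against the doubling volume and over $j$ then yields an $L^{p_{0}}\to L^{p_{0},\infty}$ bound with the desired $(1+|t|)^{\sigma_{p}n}$ growth, from which real interpolation with $L^2$ produces the strong-type estimate for all $p\in(p_{0},p_{0}')$.

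The principal obstacle is the loss of finite speed of propagation for $\cos(t\sqrt{L})$ and of Davies--Gaffney $L^2$ off-diagonal bounds, both of which were central to the Gaussian argument of \cite{CDLY1} through a sharp maximal function. In the Poisson regime one can only access off-diagonal decay at the coarser $L^{p_{0}}\to L^{2}$ level, and the polynomial tails $(1+d(x,y)/t^{1/m})^{-n-\kappa}$ barely suffice for summation against the doubling volume $V(\cdot,t^{1/m})$. Balancing this polynomial decay against the $(1+|t|2^{j})^{\kappa_{0}}$ derivative cost incurred by commuting the oscillatory factor $e^{it\lambda}$ past $L$ is exactly what forces the hypothesis $\kappa>\kappa_{0}=[n/2]+1$. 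The delicate point will be to execute the balance without loss in the sharp exponent $\sigma_{p}n$, which requires exploiting the full polynomial decay of $({\rm PEV}_{p_{0},2}^{\kappa,m})$ rather than a mere $L^{p_{0}}\to L^{2}$ boundedness, and it is here that new technical input beyond \cite{CDLY1,Dancola} will be needed.
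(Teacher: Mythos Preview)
Your outline departs from the paper's proof and has a genuine gap at the step you yourself flag as needing ``new technical input'': the summation over the dyadic frequency index $j$. With the off-diagonal bound you write down, each piece $F_{j,t}(L)$ carries a factor $2^{-j\sigma_{p}n}(1+|t|2^{j})^{\kappa_{0}}$, and since $\kappa_{0}=[\tfrac{n}{2}]+1>\tfrac{n}{2}\geq\sigma_{p}n$ for every $p$, this blows up as $j\to\infty$; the $j$-sum therefore diverges and no straightforward weak-type/Calder\'on--Zygmund argument recovers it. In the Gaussian setting of \cite{CDLY1} the analogous divergence is cured because classical mean-zero CZ atoms interact well with the heat semigroup (via the sharp maximal function machinery), but under $({\rm PEV}_{p_{0},2}^{\kappa,m})$ alone one has neither finite propagation speed nor any conservation property $e^{-tL}1=1$, so ordinary CZ cancellation does not couple to the spectral calculus of $L$ and that route is blocked.

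The paper proceeds quite differently: it proves an $H_{L}^{1}(X)\to H_{L}^{1}(X)$ endpoint estimate (Proposition~\ref{tool}) and then reaches $L^{p}$ by Stein complex interpolation of the analytic family $T_{z}=e^{(1-z)^{2}}(1+|t|)^{-(1-z)n/2}(I+L)^{-(1-z)n/2}e^{itL}$ between $H_{L}^{1}$ and $L^{2}$, together with the identification $H_{L}^{p}=L^{p}$ for $p\in(p_{0},2]$ (Lemmas~\ref{cominter} and~\ref{Hp}); imaginary powers $(I+L)^{iyn/2}$ are controlled on $H_{L}^{1}$ by a separate spectral multiplier theorem (Theorem~\ref{spectralmultiplier}). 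The point is that $H_{L}^{1}$ is built from $L$-adapted molecules $a=L^{M}b$, and acting on such a molecule one may insert factors $(I-e^{-s^{m}L})^{M}$ producing an extra $\min\{1,(2^{\ell/m}r_{B})^{mM}\}$ that makes the frequency sum converge. In other words, the cancellation you are missing is supplied not by classical atoms but by the operator-adapted molecular structure, and the interpolation is complex rather than real.
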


\begin{remark}
(i). One can also consider the $L^{p}$ boundedness of Schr\"{o}dinger groups for the self-adjoint operator $L$ having a lower bound such that $L+M_{0}\geq 0$ as in \cite{BDN,Simon}, where $M_{0}\geq 0$. In this setting, if $L$ satisfies the off-diagonal estimate
\begin{align*}
\|\chi_{B(x,t^{1/m})}e^{-t L}V_{t^{1/m}}^{\sigma_{p_{0}}}
\chi_{B(y,t^{1/m})}\|_{p_{0}\rightarrow 2}\leq Ce^{tM_{0}}\left(1+\frac{d(x,y)}{t^{1/m}}\right)^{-n-\kappa},
\end{align*}
for some $1\leq p_{0}<2$, $m>0$ and $\kappa>\kappa_{0}:=\big[\frac{n}{2}\big]+1$, then for any $M>M_{0}$ and $p\in (p_{0},p_{0}^{\prime})$, by considering the non-negative self-adjoint operator $L_{M_{0}}:=L+M_{0}$ as in \cite{BDN}, we have
\begin{align*}
\|e^{itL}(M+L)^{-\sigma_{p}n}f\|_{L^{p}(X)}\leq C_{p}(1+|t|)^{\sigma_{p}n}\|f\|_{L^{p}(X)}.
\end{align*}

(ii). Under the assumption of $({\rm PEV}_{p_{0},2}^{\kappa,m})$, we can see by a similar argument in the proof of Theorem~\ref{main} (see also \cite[Theorem 4.1]{CDLY1}) that for any $p\in (p_{0},p_{0}^{\prime})$,
$
\|e^{itL}(I+tL)^{-\sigma_{p}}\|_{p\rightarrow p}\leq C.
$
This, together with Theorem \ref{spectralmultiplier}, implies the $L^{p}$ boundedness of local Schr\"{o}dinger flow $\|e^{itL} \varphi(L) f\|_{L^p}$, where $\varphi$ is a compactly supported function in $C^\infty(\mathbb R_+)$. That is,
\begin{align*}
\|e^{itL}\varphi(2^{-k}L)\|_{p\rightarrow p}\leq C_{p}(1+2^{k}|t|)^{\sigma_{p}}.
\end{align*}
This result is already proved in~\cite{Dancola} on $\R^n$ and in~\cite{BDN} on homogeneous spaces.
\end{remark}

To show Theorem \ref{main}, inspired by Miyachi's work \cite{M1,M2}, we first study a Hardy space $H_{L}^{q}(X)$ ($\frac{2n}{2\kappa_0+n}<q\leq1$) defined via a suitable Littlewood--Paley area function and then
we show that such a Hardy space allows molecular decomposition and complex interpolation. Hence, this theorem can be reduced to proving
the following $H^q(X)$ boundedness of $e^{itL}(I+L)^{-\sigma_{p}n}$ for $\frac{2n}{2\kappa_0+n}<q<1$.
\begin{proposition}\label{tool}
Suppose that $(X,d,\mu)$ is a space of homogeneous type with a dimension $n$ and that $L$ satisfies the property $({\rm PEV}_{2,2}^{\kappa,m})$
for some $m>0$ and $\kappa>\kappa_{0}:= \big[\frac{n}{2}\big]+1$. Then for any $\frac{2n}{2\kappa_0+n}<q<1$, there exists a constant $C>0$, independent of $t$ and $f$, such that
\begin{align*}
\|e^{itL}(I+L)^{-\sigma_q n}f\|_{H_{L}^{q}(X)}\leq C(1+|t|)^{\sigma_qn}\|f\|_{H_{L}^{q}(X)}.
\end{align*}
\end{proposition}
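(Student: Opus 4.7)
The plan is to use the molecular decomposition of $H_L^1(X)$ described just above, which reduces the statement to a uniform bound on a single molecule. Fix $t \in \mathbb{R}$, write $T_t := e^{itL}(I+L)^{-n/2}$, and let $m$ be an $H_L^1$-molecule associated to a ball $B = B(x_B, r_B)$, normalized so that $\|m\|_{H_L^1} \leq 1$. It suffices to show $\|T_t m\|_{H_L^1} \leq C(1+|t|)^{n/2}$ with constant independent of $B$. The strategy is to decompose $T_t m = \sum_{j \geq 0} b_j$ with $b_j = \chi_{S_j(B)} T_t m$ on the annulus $S_j(B) = B(x_B, 2^j r_B) \setminus B(x_B, 2^{j-1} r_B)$ (and $S_0 = 2B$), and then exhibit each $b_j$ as a scalar multiple $\lambda_j m_j$ of an $H_L^1$-molecule associated to the dilated ball $B(x_B, 2^j r_B)$, with $\sum_j |\lambda_j| \lesssim (1+|t|)^{n/2}$.

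The core step is the $L^2$-estimate of $b_j$. I would decompose $T_t$ dyadically in the spectral variable via a Littlewood--Paley partition $1 = \varphi_0(\lambda) + \sum_{k \geq 1} \varphi(2^{-k}\lambda)$ with $\varphi \in C_c^\infty((1/2,2))$, giving $T_t = \sum_k F_{k,t}(L)$ where $F_{k,t}(\lambda) = \varphi_k(\lambda) e^{it\lambda}(1+\lambda)^{-n/2}$. Differentiating the oscillatory factor controls the Sobolev norm of the rescaled symbol by $\|F_{k,t}(2^k\,\cdot\,)\|_{C^\kappa} \lesssim 2^{-kn/2}(1+|t|\cdot 2^k)^\kappa$. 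Combining this with $({\rm PEV}_{2,2}^{\kappa,m})$ through the Blunck--Kunstmann/Hebisch spectral-multiplier machinery yields the localized off-diagonal bound
\begin{equation*}
\bigl\|\chi_{B(x,2^{-k/m})} F_{k,t}(L)\chi_{B(y,2^{-k/m})}\bigr\|_{2\to 2} \lesssim 2^{-kn/2}(1+|t|\cdot 2^k)^\kappa (1+2^{k/m}d(x,y))^{-\kappa}.
\end{equation*}
Summing against $m$ and integrating over the annulus $S_j(B)$ produces a bound for $\|b_j\|_2$ in which the contribution of frequency $2^k$ is non-negligible only in the propagation regime $|t|\cdot 2^{k(1-1/m)} \gtrsim 2^j r_B$; splitting the $k$-sum along this threshold and invoking $\kappa > [n/2]+1$ makes both sub-series geometric.

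To verify that $b_j$ is actually (a multiple of) a molecule I would preserve the cancellation structure of $m$: writing $m = L^M \tilde m$ as prescribed by the molecular definition gives $b_j = \chi_{S_j(B)} L^M [e^{itL}(I+L)^{-n/2}\tilde m]$, since $e^{itL}(I+L)^{-n/2}$ commutes with $L^M$. The $L^2$-bound on $\tilde m$, combined with the annular estimate above, furnishes $|\lambda_j| \lesssim (1+|t|)^{n/2} c_j$ with $\sum_j c_j < \infty$, closing the reconstruction.

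The principal obstacle is that the oscillatory derivative losses $(1+|t|\cdot 2^k)^\kappa$ produced by $e^{it\lambda}$ must be paid for entirely by the algebraic spatial tail $(1+2^{k/m}d(x,y))^{-\kappa}$, since the Poisson-type hypothesis supplies no exponential decay to spare; this contrasts sharply with the Gaussian regime of \cite{CDLY1}, where superpolynomial decay made the analogous estimate nearly free and allowed the use of sharp-maximal-function methods. The threshold $\kappa > [n/2]+1$ is precisely the minimum off-diagonal exponent that permits this trade-off to close and makes the molecular sum $\sum_j |\lambda_j|$ summable uniformly in $t$.
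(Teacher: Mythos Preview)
Your proposal has two genuine gaps.

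First, the spatial cutoff destroys the molecular structure. You write $b_j = \chi_{S_j(B)} T_t m = \chi_{S_j(B)} L^M[T_t\tilde m]$ and claim this is a multiple of a $(1,2,M,\epsilon)$-molecule. But by definition a molecule must be of the form $L^M c$ with $c\in\mathcal{D}(L^M)$ satisfying annular $L^2$ bounds on $(r^m L)^k c$ for \emph{every} $k=0,\dots,M$. The sharp cutoff $\chi_{S_j(B)}$ does not commute with $L^M$, so $b_j$ is not of the form $L^M(\cdot)$ at all, and even if it were you would still need to control the intermediate powers $(r^m L)^k$ on the primitive. The paper avoids this entirely: rather than trying to exhibit $T_t a$ as a sum of molecules, it estimates $\|S_{L,\phi}(e^{itL}(I+L)^{-n/2}a)\|_{L^1}$ directly for a single molecule $a$, using the iteration formula \eqref{iterate} to generate the vanishing factors $(I-e^{-s^mL})^M$ inside the square-function integral.

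Second, and more seriously, the off-diagonal bound
\[
\bigl\|\chi_{B(x,2^{-k/m})} F_{k,t}(L)\chi_{B(y,2^{-k/m})}\bigr\|_{2\to 2} \lesssim 2^{-kn/2}(1+|t|2^k)^\kappa (1+2^{k/m}d(x,y))^{-\kappa}
\]
that you pull from ``Blunck--Kunstmann/Hebisch machinery'' is precisely the estimate the paper's entire Section~3 is devoted to proving. That machinery relies on Phragm\'en--Lindel\"of to extend heat-kernel decay to the complex semigroup $e^{-zL}$; under the Poisson-type hypothesis $(\mathrm{PEV}_{2,2}^{\kappa,m})$ this extension is unavailable, as the paper explains explicitly in \S1.2. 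The substitute --- the oscillatory off-diagonal estimate of Proposition~\ref{p2estimate2} --- is built from scratch via the amalgam-block decomposition and iterated commutators with the resolvent $R_\lambda=(I+\lambda^{-1}L)^{-1}$ (Lemmas~\ref{pro2}, \ref{adresolvent1}--\ref{oscillatory2}). Your proposal black-boxes the main technical contribution of the paper.
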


To obtain the above proposition,
 our main approach is to establish the following new off-diagonal estimates for
the oscillatory spectral multiplier $e^{itL}F(L)$.

\begin{proposition}\label{p2estimate22}
There exist constants $C, c_{0}>0$ such that for any ball $B\subset X$ with radius $r_{B}$ and for any $\lambda>0$, $j\geq6 $,
\begin{align*}
\big\|\chi_{U_{j}(B)}e^{itL}F(L)\chi_{B}f\big\|_{2}\leq C2^{-j\kappa_{0}}(\sqrt[m]{\lambda}r_{B})^{-\kappa_{0}+\frac{n}{2}}(1+\lambda|t|)^{\kappa_{0}}
(\sqrt[m]{\lambda}r)^{-c_{0}}\|\delta_{\lambda}F\|_{C^{\kappa_{0}+1}} \|f\|_{2}
\end{align*}
for all Borel functions $F$ such that supp$F\subseteq [-\lambda,\lambda]$, where $r= {\rm min}\{r_{B},\lambda^{-1/m}\}$.
\end{proposition}

We would like to point out that:
 \begin{itemize}
\item[ (1)] Under the assumption of the Gaussian upper bounds $({\rm GE}_{m})$ of an operator $L$, the Phragm\'{e}n-Lindel\"{o}f Theorem
is the central tool for the optimal extension of the $L^{2}-L^{2}$ off-diagonal estimates of the real semigroup $e^{-\tau L}$ for real
values $\tau\in\mathbb{R}_{+}$ to that for complex values $z\in \mathbb{C}_{+}$. By using complex semigroup $e^{-(i\tau-1)\lambda^{-1}L}$
to represent spectral multiplier $F(L)$, the authors in \cite{CDLY2} (see also \cite{Fan}) showed that for any $s\geq 0$, there exists a constant $C>0$ such that
for any $j\geq 2$,
\begin{align*}
\|\chi_{U_{j}(B)}F(L)\chi_{B}\|_{2\rightarrow 2}\leq C(\sqrt[m]{\lambda}2^{j}r_{B})^{-s}\|F(\lambda\cdot)\|_{B^{s}}
\end{align*}
for all balls $B\subset X$, and all Borel functions $F$ such that supp$F\subset [-\lambda,\lambda]$. It should be noted that due to the
appearance of the Besov norm $\|\cdot\|_{B^{s}}$, this inequality can be used to obtain a sharp $L^{2}-L^{2}$ off-diagonal estimate of
compactly supported spectral multiplier with an oscillatory term $e^{itL}$, which plays a crucial role in showing the boundedness on
$H_{L}^{1}$ for Schr\"{o}dinger groups.

However, under a mild decay assumption $({\rm PEV}_{p_{0},2}^{\kappa,m})$ or the Gaussian upper bounds $({\rm GE}_{m})$ for $m< 2$,
Phragm\'{e}n-Lindel\"{o}f Theorem cannot be applied to obtaining a suitable substitution for $e^{-zL}$ in a similar way.

\item[(2)] Without $({\rm GE}_{m})$, inspired by Davies's work (\cite{Da}), Duong and Robinson (\cite{DR}) used Poisson formula for
 subharmonic function to obtain the off-diagonal decay of the heat kernel $K_{e^{-zL}}(x,y)$ for the special case $X=\mathbb{R}^{n}$ and $p_{0}=1$.

However, such an estimate  is not enough to obtain the required off-diagonal estimates of $F(L)$, since the off-diagonal decay becomes
slower and disappears gradually, as the angle ${\rm arg} z$ increases from 0 to $\frac{\pi}{2}$.
\end{itemize}

Therefore, the previous methods cannot be expected to obtain off-diagonal estimates of $F(L)$.

To overcome this main difficulty, we develop a completely different method to obtain a suitable replacement by means of amalgam
blocks and commutators. To illustrate that, we split the whole process into three steps:

\smallskip
$\bullet$
\textbf{Step 1:}
Inspired by \cite{frame}, by representing the spectral multiplier $F(L)$ as $E(e^{-\frac{L}{\lambda}})e^{-\frac{L}{\lambda}}$ and then
studying the off-diagonal properties of the operators $E(e^{-\frac{L}{\lambda}})$ and $e^{-\frac{L}{\lambda}}$ separately, we obtain
 $L^{2}-L^{2}$ amalgam-type off-diagonal estimates of $F(L)$ (see Lemma \ref{pro2});
\smallskip

$\bullet$
\textbf{Step 2:}
Define $R_{\lambda}:=(I+\lambda^{-1}L)^{-1}$. Inspired by \cite{Dancola}, by representing the oscillatory spectral multiplier $e^{itL}F(L)$
as $R_{\lambda}^{2^{\kappa+1}-2}e^{itL}\widetilde{F}(L)$, where $\widetilde{F}$ is a compactly supported Borel function satisfying
 supp$\widetilde{F}\subset [-\lambda,\lambda]$, and then studying the off-diagonal property of the operator
 $R_{\lambda}^{2^{\kappa+1}-2}e^{itL}$, we obtain $L^{2}-L^{2}$ amalgam-type off-diagonal estimates of $e^{itL}F(L)$ (see Lemma \ref{oscillatory2});
\smallskip

$\bullet$
\textbf{Step 3: }
By embedding the set $U_{j-1}(B)$ into a countable union of amalgam block with a suitable size (which is different
from  the one chosen  in \cite{frame} and \cite{Dancola}, since there are two parameters that we
 need to consider:  the radius of $B$ and the size of supp$F$), we obtain $L^{2}-L^{2}$ off-diagonal estimates of $e^{itL}F(L)$
 (see Proposition \ref{p2estimate22}).

\smallskip

\subsection{Applications}
Our results,   Theorems~\ref{main0} and  \ref{main} can be applied to all examples which are discussed in \cite{CCO,
CDLY1, frame, Dancola, DOS}.

We now provide two more particular examples:

\smallskip
1. Fractional Schr\"odinger operator  with potentials on $\mathbb R^n$.

\smallskip

Let $n\geq 1$ and $W_1, W_2$ be  locally integrable non-negative functions on
${\Bbb R}^n$.
Consider the fractional Schr\"odinger operator  with potentials  $W_1$ and $W_2$:
$$
L=(-\Delta + W_1)^{\beta} +W_2(x), \ \  \ \beta\in (0,1].
$$
The particular case $\beta = 1/2$ is often referred to the relativistic Schr\"odinger operator.
The operator $L$ is self-adjoint as an operator associated with a well defined closed quadratic form. By the classical subordination
formula (see for example, \cite[Section 5.4]{Gr}) together with the Feynman-Kac formula it follows that  the semigroup kernel
$h_t(x,y)$ associated to $e^{-tL}$ satisfies the estimate
\begin{eqnarray*}
	0\leq  h_t(x,y) \leq Ct^{-{n/2\beta}} \left(1+t^{-{1\over 2\beta}}|x-y|\right)^{-(n+2\beta)}
\end{eqnarray*}
for all $t>0$ and $x,y\in{\Bbb R}^n$.
Hence,  estimate  \eqref{heat kernel bound} holds  for   $m=2\beta$ and  $\alpha=2\beta$.
 If $n=1$ and $\beta > 1/2$,   then  we apply  Theorem~\ref{main0}   to
  obtain the sharp   $L^p$-Sobolev   estimate \eqref{Lp main0} for the Schr\"odinger equation \eqref{Sch}  for the operator $L$.

\smallskip
2. Sub-Laplacian on certain Carnot--Carath\'edory spaces developed by Nagel and Stein \cite{NS04}.

\smallskip
We  first recall
   the background of this setting \cite{NS04}.
Let $M$ be a connected smooth manifold
and $\{\mathbb{X}_1, \cdots, \mathbb{X}_k\}$ are $k$ given smooth
real vector fields on $M$ satisfying H\"{o}rmander
condition of order $m$, i. e., these vector fields together with
their commutators of order $\leq m$ span the tangent space to $M$ at each point.

It was shown in \cite{NS04} that
there is a pseudo-metric $d$ on $M$
such that $d(x,y)$ is $C^{\infty}$ on $M\times M \backslash \{{\rm
diagonal}\}$, and for $x\neq y$
\begin{eqnarray*}
|\partial_X^K\partial_Y^Ld(x,y)|\lesssim d(x,y)^{1-K-L}.
\end{eqnarray*}
Here $\partial_X^K$ are products of $K$ vector fields
$\{X_1,\cdots X_k\}$ acting as derivatives on the $x$ variable, and
$\partial_Y^L$ are corresponding $L$ vector fields acting on the
$y$ variable. There is also a doubling measure on $M$, which was given in each specific
example of $M$.

Consider the sub-Laplacian $\mathcal {L}$ on $M$ in
self-adjoint form, given by
\begin{eqnarray*}
\mathcal {L}=\sum_{j=1}^k \mathbb{X}_j^{*}\mathbb{X}_j.
\end{eqnarray*}
Here $(\mathbb{X}_j^{*}\varphi, \psi)=(\varphi, \mathbb{X}_j\psi)$,
where $(\varphi,\psi)=\int\limits_{M} \varphi(x)\bar{\psi}(x)dx
$, and $\varphi, \psi\in C_0^{\infty}(M)$, the space of $C^{\infty}$
functions on $M$ with compact support. In general,
$\mathbb{X}_j^{*}= -\mathbb{X}_j+a_j$, where $a_j\in C^{\infty}(M)$.
The solution of the following initial value problem for the heat
equation
\begin{eqnarray*}
{{\partial u} \over {\partial s}}(x,s)+      \mathcal {L}_x u(x,s)=0
\end{eqnarray*}
with $u(x,0)=f(x)$ is given by $u(x,s)=H_s(f)(x)$, where $H_s$ is
the operator given via the spectral theorem by $H_s=e^{-s\mathcal
{L}}$, and an appropriate self-adjoint extension of the non-negative
operator $\mathcal {L}$ initially defined on $C_0^{\infty}(M)$. Nagel and Stein
proved that for $f\in L^2(X)$,
\begin{eqnarray*}
H_s(f)(x)=\int_M H(s,x,y)f(y)d\mu(y)
\end{eqnarray*}
and the heat kernel $H(s,x,y)$ satisfy the following property (see Proposition 2.3.1
in \cite{NS04} and Theorem 2.3.1 in \cite{NS01a}):

For every integer $N\geq 0$,
\begin{eqnarray*}
{|H(s,x,y)|}
 \lesssim  \frac{\displaystyle 1 }{\displaystyle
V(x,d(x,y))+V(x,\sqrt{s}) +V(y,\sqrt{s}) } \bigg(\frac{\displaystyle
\sqrt{s} }{\displaystyle d(x,y)+\sqrt{s} } \bigg)^{N\over 2},
\end{eqnarray*}
which implies \eqref{hh} obviously. Hence, we can apply  Theorem~\ref{main0}   to
  obtain the sharp   $L^p$-Sobolev   estimate \eqref{Lp main0} for the Schr\"odinger equation \eqref{Sch}  for the operator $\mathcal {L}$.

\subsection{Notation and structure of the paper}
For $1\leq p \leq+\infty$, we denote the norm of a function $f\in L^{p}(X,d\mu)$ by $\|f\|_{p}$. If $T$ is a bounded linear operator
from $L^{p}(X,d\mu)$ to $L^{q}(X,d\mu)$, $1\leq p,q\leq+\infty$, we write $\|T\|_{p\rightarrow q}$ for the operator norm of $T$. The
indicator function of a subset $E\subseteq X$ is denoted by $\chi_{E}$. Besides, let $\mathcal{D}(T)$ be the domain of an operator $T$.
Throughout the paper,   $V_{s}$ is  a pointwise multiplier operator defined by $V_{s}f(x):=V(x,s)f(x)$ and
$\delta_{r}F$ is   the dilation of a function $F$, defined   by $\delta_{r}F(x):=F(rx)$. Recall that $n$
is the dimension of the space $X$, we will write
\begin{eqnarray}\label{xx}
  \kappa_{0}=\left[\frac{n}{2}\right]+1,\ \ \ {\rm and}\ \ \
  \sigma_{p}=\left|\frac{1}{p}-\frac{1}{2}\right|, \ \ \ 1\leq p\leq \infty .
\end{eqnarray}
   Also,  for any given ball $B$ in $X$  we set
\begin{eqnarray}\label{xx}
   U_{0}(B):=B \ \ \  {\rm and}\ \ \
   U_{j}(B):=2^{j}B\backslash2^{j-1} B,\ \ j=1, 2,\ldots.
\end{eqnarray}

This paper is organised as follows. In Section 2 we provide the preliminaries, including the fundamental properties of the off-diagonal
estimate $({\rm PEV}_{p_{0},2}^{a,m})$ and the Hardy space associated with the operator $L$
satisfying $({\rm PEV}_{p_{0},2}^{a,m})$. In Section 3, we develop new techniques on the off-diagonal estimates for the compactly supported
  spectral multipliers (Proposition \ref{p2estimate}) and the oscillatory compactly supported spectral multipliers (Proposition \ref{p2estimate22}).
   In Section 4 we prove our main result Theorem \ref{main}.
In the last section we provide some results  on the molecular decomposition and interpolation
of the Hardy space $H^p_L(X)$.

\section{Preliminaries}
\setcounter{equation}{0}
\subsection{Basic properties of $({\rm PEV}_{p_{0},2}^{a,m})$}
In this subsection, we recall some basic properties of $({\rm PEV}_{p_{0},2}^{a,m})$, which were essentially discussed in \cite{frame}.
Assume that $L$ satisfies the property (${\rm PEV}_{p_{0},2}^{a,m}$) for some $1\leq p_{0}<2$ and $m>0$.
 By H\"{o}lder's inequality,  the property (${\rm PEV}_{p_{0},2}^{a,m}$) implies
   that for any $p_{0}\leq p\leq 2$,
\begin{align}\label{inter}
 \tag{${\rm PEV}_{p,2}^{a,m}$}\|\chi_{B(x,\lambda^{1/m})}e^{-\lambda L}V_{\lambda^{1/m}}^{\sigma_{p}}\chi_{B(y,\lambda^{1/m})}f\|_{2}
 &\leq C\left(1+\frac{d(x,y)}{\lambda^{1/m}}\right)^{-n-a}\|V_{\lambda^{1/m}}^{\frac{1}{p}-\frac{1}{p_{0}}}\chi_{B(y,\lambda^{1/m})}f\|_{p_{0}}\\
 &\leq C\left(1+\frac{d(x,y)}{\lambda^{1/m}}\right)^{-n-a}\|f\|_{p}\nonumber.
\end{align}
In particular, we have
$$
 \|\chi_{B(x,\lambda^{1/m})}e^{-\lambda L} \chi_{B(y,\lambda^{1/m})} \|_{2\to 2}
 \leq C\left(1+\frac{d(x,y)}{\lambda^{1/m}}\right)^{-n-a}.
$$

\smallskip


%

Next, we divide $X$ into countable partitions with different size parameters. For every $r>0$, we choose a sequence
 $\{x_{i}\}_{i=1}^{\infty}\in X$ such that $d(x_{i},x_{j})>\frac{r}{2}$ for $i\neq j$ and $\sup\limits_{x\in X}\inf\limits_{i}d(x,x_{i})\leq \frac{r}{2}$.
  Such sequence exists since $X$ is separable. Set
\begin{align*}
D= \bigcup_{i\in \mathbb{N}}B(x_i,r/4).
\end{align*}
Then we define the amalgam block $Q_{i}(r)$ by the formula
\begin{align*}
Q_i(r) =    B(x_i,r/4)\bigcup\left[B\left(x_i,r/2\right)\setminus
\left(\bigcup_{j<i}B\left(x_j,r/2\right)\cup D\right)\right],
\end{align*}
so that $\{Q_{i}(r)\}_{i}$ is a countable partition of $X$. Namely,
\begin{align}\label{divide}
X=\mathop{\bigcup}\limits_{i\in\mathbb{N}}Q_{i}(r),
\end{align}
where $Q_{i}(r)\cap Q_{j}(r)=\emptyset$ if $i\neq j$. We say that $x_{i}$ is the center of $Q_{i}(r)$ and $r$
is the diameter of $Q_{i}(r)$. Such a partition of $X$ is not unique. For a fixed partition, let $\mathcal{I}_{r}$ be a index
set consisting of all $i\in\mathbb{N}$ such that
\begin{align*}
i\in \mathcal{I}_{r} \Leftrightarrow x_{i}{\rm \ is\ the\ center\ of\ }Q_{i}(r).
\end{align*}

Observe that $Q_{i}(r)\subset B(x_{i},r)$ and there exists a uniform constant $C>0$ depending only on the doubling
constant such that $\mu(Q_{i}(r))\geq C\mu(B_{i})$.

The following lemma is a simple consequence of the estimate (${\rm PEV}_{p,2}^{a,m}$).
\begin{lemma}\label{heatbound}
Let $p_{0}\leq p\leq 2$, then there exists a constant $C>0$ such that for any $\lambda>0$, $0<r\leq \lambda^{-1/m}$, and $\beta\in\mathcal{I}_r$,
\begin{align*}
\big\|e^{-\frac{L}{\lambda}}V_{\lambda^{-1/m}}^{\sigma_{p}}\chi_{Q_{\beta}(r)}f\big\|_{2}\leq C(\sqrt[m]{\lambda}r)^{-n/2}\|f\|_{p}.
\end{align*}
\end{lemma}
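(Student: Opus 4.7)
The plan is to combine the off-diagonal bound $({\rm PEV}_{p,2}^{a,m})$ with the amalgam decomposition of $X$ at the natural scale $\lambda^{-1/m}$ and an almost-orthogonality (Schur) argument to assemble the local pieces.

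First I would fix $r=\lambda^{-1/m}$ and work with the partition $X=\bigcup_{i\in\mathcal{I}_{r}}Q_{i}(r)$ from \eqref{divide}, whose blocks satisfy $Q_{i}(r)\subset B(x_{i},r)$. Decomposing $f=\sum_{j} f\chi_{Q_{j}(r)}$ and applying $({\rm PEV}_{p,2}^{a,m})$ with time parameter $1/\lambda$ in place of $\lambda$ (so that its length scale becomes $r$) to each pair of blocks gives
\[
\bigl\|\chi_{Q_{i}(r)}\,e^{-L/\lambda}V_{r}^{\sigma_{p}}(\chi_{Q_{j}(r)}f)\bigr\|_{2}\leq C\bigl(1+d(x_{i},x_{j})/r\bigr)^{-n-a}\|\chi_{Q_{j}(r)}f\|_{p},
\]
since the characteristic functions of $Q_{i}(r)$ and $Q_{j}(r)$ are dominated by those of $B(x_{i},r)$ and $B(x_{j},r)$ respectively.

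Next I would square and sum in $i$. Using disjointness of the blocks and the triangle inequality,
\[
\|e^{-L/\lambda}V_{r}^{\sigma_{p}}f\|_{2}^{2}=\sum_{i}\|\chi_{Q_{i}(r)}e^{-L/\lambda}V_{r}^{\sigma_{p}}f\|_{2}^{2}\leq \sum_{i}\Bigl(\sum_{j}Ca_{ij}\|\chi_{Q_{j}(r)}f\|_{p}\Bigr)^{2},
\]
where $a_{ij}=(1+d(x_{i},x_{j})/r)^{-n-a}$. Schur's test shows that $(a_{ij})$ is bounded on $\ell^{2}$: the row and column sums are uniformly finite thanks to \eqref{doubling} together with the disjointness of the balls $B(x_{j},r/4)$, combined with $a>0$. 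Consequently,
\[
\|e^{-L/\lambda}V_{r}^{\sigma_{p}}f\|_{2}\leq C\Bigl(\sum_{j}\|\chi_{Q_{j}(r)}f\|_{p}^{2}\Bigr)^{1/2}.
\]

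Finally, since $p\leq 2$, the elementary embedding $\ell^{p}\hookrightarrow \ell^{2}$ applied to the sequence $(\|\chi_{Q_{j}(r)}f\|_{p})_{j}$ yields
\[
\Bigl(\sum_{j}\|\chi_{Q_{j}(r)}f\|_{p}^{2}\Bigr)^{1/2}\leq \Bigl(\sum_{j}\|\chi_{Q_{j}(r)}f\|_{p}^{p}\Bigr)^{1/p}=\|f\|_{p},
\]
using that $\{Q_{j}(r)\}_{j}$ partitions $X$. The only mildly delicate point is the Schur step, where one must verify $\sum_{j}(1+d(x_{i},x_{j})/r)^{-n-a}\leq C$ uniformly in $i$; this is standard, since the dyadic annulus $\{j:2^{k-1}\leq d(x_{i},x_{j})/r\leq 2^{k}\}$ contains $O(2^{kn})$ centers by \eqref{doubling}, and $\sum_{k}2^{kn}2^{-k(n+a)}<\infty$ because $a>0$. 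Everything else is routine bookkeeping with the amalgam blocks.
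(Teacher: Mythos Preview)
Your proof is correct and follows essentially the same route as the paper: decompose $X$ into amalgam blocks at scale $r=\lambda^{-1/m}$, apply $({\rm PEV}_{p,2}^{a,m})$ blockwise, sum using a Schur/Cotlar-type bound, and finish with $\ell^{p}\hookrightarrow\ell^{2}$. If anything, your treatment of the Schur step is more careful than the paper's, which compresses the triangle inequality in $\beta$ and the summability of $(1+d(x_{\alpha},x_{\beta})/r)^{-n-a}$ into a single displayed line.
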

\begin{proof}
The definition of amalgam block allows us to decompose $X=\mathop{\cup}\limits_{\alpha\in\mathbb{N}}Q_{\alpha}(r)$, and then
\begin{align*}
\big\|e^{-\frac{L}{\lambda}}V_{\lambda^{-1/m}}^{\sigma_{p}}\chi_{Q_{\beta}(r)}f\big\|_{2}
&=\left(\sum_{\alpha\in \mathcal{I}_{r}}
\big\|\chi_{Q_{\alpha}(r)}e^{-\frac{L}{\lambda}}V_{\lambda^{-1/m}}^{\sigma_{p}}\chi_{Q_{\beta}(r)}f\big\|_{2}^{2}
\right)^{\frac{1}{2}}\\
&\leq C\left(\sum_{\alpha\in \mathcal{I}_{r}}
\left(1+\sqrt[m]{\lambda}d(x_{\alpha},x_{\beta})\right)^{-2n-2a}\right)^{\frac{1}{2}}\|f\|_{p}\\
&\leq C(\sqrt[m]{\lambda}r)^{-n/2}\|f\|_{p},
\end{align*}
This ends the proof of Lemma \ref{heatbound}.
\end{proof}
As a corollary, we can show the following $L^{p}-L^{2}$ spectral multiplier theorem.
\begin{lemma}\label{global00}
Let $p_{0}\leq p\leq 2$, then there exists a constant $C>0$ such that for any $\lambda>0$, $0<r\leq \lambda^{-1/m}$, and $\beta\in\mathcal{I}_r$,
\begin{align*}
\|F(L)V_{\lambda^{-1/m}}^{\sigma_{p}}\chi_{Q_{\beta}(r)}f\|_{2}\leq C(\sqrt[m]{\lambda}r)^{-n/2}\|\delta_{\lambda}F\|_{C^{1}}\|f\|_{p}
\end{align*}
for all Borel functions $F$ such that supp$F\subseteq [-\lambda,\lambda]$.
\end{lemma}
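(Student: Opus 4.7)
The strategy is to factor the operator $F(L) V_{\lambda^{-1/m}}^{\sigma_p}$ as a composition of an $L^2 \to L^2$ bounded operator with the $L^p \to L^2$ bounded operator provided by Lemma~\ref{heatbound}, using the compact spectral support of $F$ to absorb an inverse heat factor.

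Concretely, I would introduce the auxiliary Borel function $G(\xi) := F(\xi) e^{\xi/\lambda}$. Because $\supp F \subseteq [-\lambda,\lambda]$, $G$ is itself supported in $[-\lambda,\lambda]$ and satisfies
\begin{align*}
\|G\|_\infty \leq e \, \|F\|_\infty.
\end{align*}
By the bounded Borel functional calculus, $G(L)$ is bounded on $L^2(X)$ with $\|G(L)\|_{2\to 2} \leq e\|F\|_\infty$, and the pointwise identity $G(\xi) e^{-\xi/\lambda} = F(\xi)$ on $[0,\infty) \supseteq \sigma(L)$ yields the operator identity
\begin{align*}
F(L) V_{\lambda^{-1/m}}^{\sigma_p} = G(L) \circ \bigl(e^{-L/\lambda} V_{\lambda^{-1/m}}^{\sigma_p}\bigr).
\end{align*}

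Combining this factorisation with Lemma~\ref{heatbound} gives
\begin{align*}
\|F(L) V_{\lambda^{-1/m}}^{\sigma_p} f\|_2 \leq \|G(L)\|_{2\to 2} \, \bigl\|e^{-L/\lambda} V_{\lambda^{-1/m}}^{\sigma_p} f\bigr\|_2 \leq C \|F\|_\infty \|f\|_p.
\end{align*}
Finally, since the dilation $\delta_\lambda$ is an isometry on $L^\infty(\mathbb{R})$, we have $\|F\|_\infty = \|\delta_\lambda F\|_\infty \leq \|\delta_\lambda F\|_{C^1}$, completing the argument.

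There is no substantial obstacle here: the only point that needs verification is the functional-calculus identity $F(L) = G(L) e^{-L/\lambda}$, which is immediate from the spectral theorem because both sides correspond to the same bounded Borel function $F$ on the spectrum of $L$. The use of the $C^1$-norm of $\delta_\lambda F$ (in place of the weaker $L^\infty$-norm) is harmless and is written this way purely for uniformity with the stronger multiplier estimates developed later in the paper.
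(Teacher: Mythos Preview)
Your proof is correct and, in fact, more elementary than the paper's. Both arguments rest on the same factorisation idea---peel off a factor $e^{-L/\lambda}$ to invoke Lemma~\ref{heatbound}, and control the remaining piece on $L^2$---but they handle that remaining piece differently. The paper writes $F(L)=E(e^{-L/\lambda})e^{-L/\lambda}$ with $E(\tau)=F(-\lambda\log\tau)\tau^{-1}$, then represents $E(e^{-L/\lambda})$ via the Fourier inversion formula and bounds $\|\hat E\|_{L^1}\le C\|E\|_{H^1}\le C\|\delta_\lambda F\|_{C^1}$; this genuinely consumes one derivative of $F$. Your direct route $F(L)=G(L)e^{-L/\lambda}$ with $G(\xi)=F(\xi)e^{\xi/\lambda}$ bounds $\|G(L)\|_{2\to 2}\le e\|F\|_\infty$ by the spectral theorem alone, so you actually obtain the sharper conclusion $\|F(L)V_{\lambda^{-1/m}}^{\sigma_p}f\|_2\le C\|\delta_\lambda F\|_{L^\infty}\|f\|_p$. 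The paper's more elaborate representation is not wasted effort, however: the same $E$-substitution and Fourier inversion are reused with real content in Lemma~\ref{pro2}, where commutator estimates on $e^{i\xi e^{-L/\lambda}}$ are needed and a bare $L^\infty$ bound would not suffice. For the present lemma, though, your shortcut is entirely adequate.
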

\begin{proof}
Let $E(\tau):=F(-\lambda{\rm log}\tau)\tau^{-1}$ so that $F(L)=E(e^{-\frac{L}{\lambda}})e^{-\frac{L}{\lambda}}$.
 Then it follows from the Fourier inversion formula that
\begin{align*}
E(e^{-\frac{L}{\lambda}})=\int_{-\infty}^{+\infty}e^{i\xi e^{-\frac{L}{\lambda}}}\hat{E}(\xi)d\xi.
\end{align*}

This, together with the spectral theorem and Lemma \ref{heatbound}, yields that
\begin{align*}
\|F(L)V_{\lambda^{-1/m}}^{\sigma_{p}}\chi_{Q_{\beta}(r)}f\|_{2}
&=\Big\|\int_{-\infty}^{+\infty}e^{i\xi e^{-\frac{L}{\lambda}}}e^{-\frac{L}{\lambda}}V_{\lambda^{-1/m}}^{\sigma_{p}}\chi_{Q_{\beta}(r)}f \hat{E}(\xi)d\xi\Big\|_{2}\\
&\leq\int_{-\infty}^{+\infty}\big\|e^{i\xi e^{-\frac{L}{\lambda}}}e^{-\frac{L}{\lambda}}V_{\lambda^{-1/m}}^{\sigma_{p}}\chi_{Q_{\beta}(r)}f\big\|_{2} |\hat{E}(\xi)|d\xi\\
&\leq \|\hat{E}\|_{1}\big\|e^{-\frac{L}{\lambda}}V_{\lambda^{-1/m}}^{\sigma_{p}}\chi_{Q_{\beta}(r)}f\big\|_{2}\\
&\leq C(\sqrt[m]{\lambda}r)^{-n/2}\|\delta_{\lambda}F\|_{C^{1}}\|f\|_{p},
\end{align*}
where in the last inequality we used the fact that supp$\delta_{\lambda}F\subset [-1,1]$ implies that
\begin{align}\label{Cnorm}
\int_{-\infty}^{+\infty}|\hat{E}(\xi)|d\xi\leq C\|E\|_{H^{1}}\leq C \|\delta_{\lambda}F\|_{H^{1}}\leq C \|\delta_{\lambda}F\|_{C^{1}}.
\end{align}
This finishes the proof of Lemma \ref{global00}.
\end{proof}

\subsection{Preliminaries on Hardy space $H_{L}^{p}(X)$}
There were numerous number of references (see for example, \cite{ADM, DL, DY2, DY3, HLMMY, KU}) studying the theory of
the Hardy spaces associated with certain operators, especially those ones satisfying the Gaussian upper bounds $({\rm GE_{m}})$.
At the beginning of this section, for any $\frac{2n}{2\kappa_0+n}< p\leq 2$, we will extend some basic definitions to the Hardy space $H_{L}^{p}$
associated with operators which only satisfy the estimate (${\rm PEV}_{2,2}^{\kappa,m}$) for some $m>0$ and $\kappa>\kappa_{0}$.

In this article, we will define the Hardy space $H_{L}^{p}(X)$ associated with operators in terms of Littlewood-Paley type area
function instead of the semigroup factor $t^{m}Le^{-t^{m}L}$. Thanks to the compactly supported property of the Littlewood-Paley
function and the off-diagonal estimate (\ref{p2estimate}), we can see below that in our setting, this definition is much more
convenient to obtain the $L^{p}$ boundedness for Schr\"{o}dinger groups.

Usually, we should define the $L^{2}$ adapted Hardy space $\mathcal{H}^{2}(X):=\overline{R(L)}$, that is, the closure of the range of $L$ in $L^{2}(X)$. Then $L^{2}(X)$ is the orthogonal sum of $\mathcal{H}^{2}(X)$ and the null space $N(L)$. However in our setting, that is, if $L$ satisfies the property
$({\rm PEV}_{p_{0},2}^{\kappa,m})$, then $N(L)=\{0\}$. Indeed, it can be verified that for any $f\in N(L)$ and $t>0$, we have $e^{-tL}f=f$ and therefore, by the condition $({\rm PEV}_{p_{0},2}^{\kappa,m})$,
$$
\|f\|_{p_{0}^{\prime}}\leq \lim_{t\rightarrow +\infty}\sum_{i\in\mathcal{I}_{t^{1/m}}}\|\chi_{B(x,t^{1/m})}e^{-tL}\chi_{B(x_{i},t^{1/m})}f\|_{p_{0}^{\prime}}\leq C\lim_{t\rightarrow +\infty}V(x,t^{1/m})^{-\sigma_{p_{0}}}\|f\|_{2}=0.
$$
So in our setting, $\mathcal{H}^2(X)=L^2(X)$.

Next, let $\phi$ be a non-negative cut-off function on $C_{c}^{\infty}(\mathbb{R})$ such that supp$\phi\subset (1/4,1)$ and define
\begin{align*}
S_{L,\phi}(f):=\left(\int_{0}^{\infty}\int_{d(x,y)<\tau^{1/m}}|\phi(\tau L)f(y)|^{2}\frac{d\mu(y)}{V(x,\tau^{1/m})}\frac{d\tau}{\tau}\right)^{\frac{1}{2}}.
\end{align*}
\begin{definition}
{\rm Suppose that $L$ satisfies the estimate (${\rm PEV}_{2,2}^{\kappa,m}$) for some $m>0$ and $\kappa>\kappa_{0}$.
For any $\frac{2n}{2\kappa_0+n}< p\leq 2$, we define the Hardy space $H_{L}^{p}(X)$ associated with $L$ be the completion of the space
\begin{align*}
\{f\in \mathcal{H}^{2}(X):\ S_{L,\phi}(f)\in L^{p}(X)\}
\end{align*}
endowed with the norm
\begin{align*}
\|f\|_{H_{L}^{p}(X)}=\|S_{L,\phi}f\|_{L^{p}(X)}.
\end{align*}}

\end{definition}
\begin{remark}
{\rm
It can be seen from Lemmas \ref{moleculardecomposition}, \ref{cominter}, \ref{Hp} and \cite[Theorem 3.15,
 Proposition 4.4]{DL} that under the assumption that $L$ satisfies Davies-Gaffney estimates $({\rm DG}_{m})$ for $m\geq 2$,
  that is, there exist constants $C,c>0$ such that for all $\lambda>0$, and all $x,y\in X$,
\begin{align}
\tag{${\rm DG}_{m}$}
\|\chi_{B(x,\lambda^{1/m})}e^{-\lambda L}\chi_{B(y,\lambda^{1/m})}\|_{2\rightarrow 2}\leq C{\rm exp}
\left(-c\left(\frac{d(x,y)}{\lambda^{1/m}}\right)^{\frac{m}{m-1}}\right),
\end{align}
then the Hardy space $H_{L}^{p}(X)$
coincides with the Hardy space in some previous references (see for example, \cite{ADM, DL, DY2, DY3, HLMMY, KU}).
}
\end{remark}

It is easy to show, by combining Lemmas \ref{moleculardecomposition} and \ref{cominter}, that this definition is independent of the choice of $\phi$.


An important tool to study the Hardy space $H_{L}^{q}(X)$ for $\frac{2n}{2\kappa_0+n}<q\leq 1$ is the molecule decomposition.
To illustrate that, we need the following definition of $(q,2,M,\epsilon)$-molecule associated with the operator $L$.
\begin{definition}
{\rm Given $\epsilon>0$, $\frac{2n}{2\kappa_0+n}<q\leq 1$ and $M\in \mathbb{N}$.
We say that a function $a(x)\in L^{2}(X)$ is called a $(q,2,M,\epsilon)$-molecule associated with $L$
for some $\epsilon>0$ and $m\in\mathbb{N}$ if there exists a function $b\in \mathcal{D}(L^{M})$ and a ball $B=B(x_{B},r_{B})$ such that

\medskip

{ (i)}\ $a=L^M b$;

\medskip

{ (ii)}\  For every $k=0,1,2,\dots,M$ and $j=0,1,2,\dots$,
$$
\|(r_{B}^{m}L)^{k}b\|_{L^2(U_j(B))}\leq 2^{-j\epsilon} r_{B}^{mM}
\mu(2^jB)^{-(\frac{1}{q}-\frac{1}{2})},
$$
where we denote $\mathcal{D}(T)$ be the domain of an operator $T$.
}
\end{definition}
To continue, we define the molecular Hardy spaces associated with $L$ as follows.
\begin{definition}
{\rm Given $\epsilon>0$, $\frac{2n}{2\kappa_0+n}<q\leq 1$ and $M\in \mathbb{N}$.
Let $\{\lambda_{j}\}_{j=0}^{\infty}\in\ell^{q}$, $\{a_{j}\}$ is a sequence of $(q,2,M,\epsilon)$-molecule,
we say that $f=\sum_{j}\lambda_{j}a_{j}$, where the sum converges in $L^{2}(X)$, is a molecular $(q,2,M,\epsilon)$-representation of $f$. Set
\begin{align*}
\mathbb{H}_{L,mol,M,\epsilon}^{q}(X)=\{f:  f\ {\rm has \ a \ molecular}\ (q,2,M,\epsilon)-{\rm representation}\},
\end{align*}
endowed with the norm
\begin{align*}
\|f\|_{H_{L,mol,M,\epsilon}^{q}(X)}=\inf\left\{\sum_{j=0}^{\infty}|\lambda_{j}|:
f=\sum_{j=0}^{\infty}\lambda_{j}m_{j}\ {\rm is \ a \ molecular }\ (q,2,M,\epsilon)-{\rm representation}\right\}.
\end{align*}
We define the space $H_{L,mol,M,\epsilon}^{q}(X)$ as the completion of $\mathbb{H}_{L,mol,M,\epsilon}^{q}(X)$ with respect to this norm.
}
\end{definition}

Now, we point out that the Hardy space $H_{L}^{q}(X)$ allows molecular decomposition and complex interpolation.
To be precise, we prove the following three lemmas.
\begin{lemma}\label{moleculardecomposition}
Given $\epsilon>0$, $\frac{2n}{2\kappa_0+n}<q\leq 1$ and $M\in \mathbb{N}$. Suppose that $(X,d,\mu)$ is a space of homogeneous type with a dimension $n$ and that $L$ satisfies the property
 (${\rm PEV}_{2,2}^{\kappa,m}$) for some $m>0$ and $\kappa> \kappa_{0}$. Let $M$ be a sufficient large constant,
 then we have $H_{L,mol,M,\epsilon}^{q}(X)=H_{L}^{q}(X)$ with equivalent norm, that is
\begin{align*}
\|f\|_{H_{L,mol,M,\epsilon}^{q}(X)}\approx \|f\|_{H_{L}^{q}(X)}.
\end{align*}
\end{lemma}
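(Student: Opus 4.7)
The plan is to prove the two inclusions $H_{L,mol,M,\epsilon}^1(X) \hookrightarrow H_L^1(X)$ and $H_L^1(X) \hookrightarrow H_{L,mol,M,\epsilon}^1(X)$ separately, following the general template for Hardy spaces associated with operators (in the spirit of Hofmann--Mayboroda and Hofmann--Lu--Mitrea--Mitrea--Yan), but with the crucial modification that every use of Davies--Gaffney semigroup bounds is replaced by the compactly supported spectral multiplier off-diagonal estimate of Proposition \ref{p2estimate}, which is available under $({\rm PEV}_{2,2}^{\kappa,m})$ for $\kappa>\kappa_{0}$.

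For $H_{L,mol,M,\epsilon}^1(X) \hookrightarrow H_L^1(X)$, the task reduces to showing that every $(1,2,M,\epsilon)$-molecule $a = L^M b$ associated with a ball $B = B(x_B,r_B)$ satisfies $\|S_{L,\phi}(a)\|_{L^1(X)} \leq C$ uniformly. I would split $X = \bigcup_{j\geq 0} U_j(4B)$. For the near annuli ($j \leq 4$), Cauchy--Schwarz combined with the $L^2$-boundedness of $S_{L,\phi}$ and the bound $\|a\|_2 \lesssim \mu(B)^{-1/2}$ (obtained by summing the molecular estimates on $U_k(B)$) closes the contribution. For the far annuli ($j \geq 5$), I would further split the $\tau$-integral defining $S_{L,\phi}$ at $\tau = r_B^m$: on the small-scale range $\tau \leq r_B^m$, apply Proposition \ref{p2estimate} directly to $\phi(\tau L)$ acting on each $b\chi_{U_k(B)}$; on the large-scale range $\tau > r_B^m$, exploit the cancellation $a = L^M b$ by writing $\phi(\tau L) a = \tau^{-M}(x^M\phi)(\tau L) b$ and harvesting the scale gain $(r_B^m/\tau)^M$. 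In both regimes Proposition \ref{p2estimate} yields polynomial decay of the form $(1+2^j)^{-\kappa}$ in the annulus index, and choosing $M$ large enough relative to $n/m$ and $\epsilon$ makes the double sum over $j,k$ converge.

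For the reverse inclusion, I would employ a Calder\'on-type reproducing formula $f = c_\psi \int_0^\infty \psi(tL)\phi(tL) f \, \frac{dt}{t}$ with $\psi(x) = x^M \eta(x)$, where $\eta$ is smooth and compactly supported in $(1/4,1)$ and $c_\psi \int_0^\infty \psi(t)\phi(t)\tfrac{dt}{t} = 1$. Since $F(t,y) := \phi(tL)f(y)$ lies in the tent space $T^{2,1}(X)$ with norm equivalent to $\|f\|_{H_L^1(X)}$ (via the quadratic estimate for $\phi(tL)$ and a standard duality argument), Coifman--Meyer--Stein atomic decomposition gives $F = \sum_i \lambda_i A_i$ with tent atoms $A_i$ supported in $\widehat{B_i}$. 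Setting $b_i := c_\psi \int_0^\infty t^M \eta(tL) A_i(t,\cdot)\tfrac{dt}{t}$ and $a_i := L^M b_i$, one verifies by duality that each $a_i$ is, up to a uniform constant, a $(1,2,M,\epsilon)$-molecule adapted to $B_i$: the molecular $L^2$-estimates on $U_j(B_i)$ follow from testing against an arbitrary $L^2$-normalised function supported on $U_j(B_i)$ and invoking Proposition \ref{p2estimate} on $\eta(tL)^\ast$ annulus-by-annulus, using the support $\widehat{B_i}$ to restrict the $t$-integration to $t\leq r_{B_i}$.

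The main obstacle is the transition from exponentially decaying off-diagonal estimates (available under Davies--Gaffney in the cited Hardy-space literature) to the merely polynomial decay provided by $({\rm PEV}_{2,2}^{\kappa,m})$. Under Davies--Gaffney, the dependence on the annulus index $j$ is essentially invisible; in our setting every off-diagonal application loses an explicit polynomial factor, so the summability in $j$ must be traced back to the assumption $\kappa > \kappa_0 = [n/2]+1$, which is exactly the threshold that makes the off-diagonal terms $(1+2^j)^{-\kappa}$ integrable against the doubling bound $\mu(2^j B) \lesssim 2^{jn}\mu(B)$. Verifying simultaneously that this threshold suffices for both the molecule-to-area-function and the area-function-to-molecule estimates, and that the choice of $M$ can be made compatibly with $\epsilon$, is the delicate point; once this bookkeeping is in place, the rest is a routine adaptation of the standard Hardy-space theory.
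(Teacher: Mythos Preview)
Your plan is correct and matches the paper's strategy: both inclusions hinge on Proposition~\ref{p2estimate} as the substitute for Davies--Gaffney decay, and your reverse inclusion via tent-space atoms and duality is exactly the paper's Lemma~\ref{tentatom}. The paper's forward inclusion is organized slightly differently---rather than bounding $\|S_{L,\phi}(a)\|_{L^1}$ on spatial annuli it shows $\phi(\tau^m L)a\in T_2^1(X)$ by exhibiting a tent-atom decomposition on regions $\eta_j,\eta_j',\eta_j''$, and in place of your direct cancellation $\phi(\tau L)a=\tau^{-M}(x^M\phi)(\tau L)b$ for large $\tau$ it invokes the iteration identity~\eqref{iterate} with factors $(I-e^{-s^mL})^M$---but these are equivalent routes and your simpler cancellation works once $M$ is large enough to absorb the losses $(\sqrt[m]{\lambda}r_B)^{-\kappa_0}$ and $(\sqrt[m]{\lambda}r)^{-c_0}$ from Proposition~\ref{p2estimate}. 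One small correction to your closing paragraph: Proposition~\ref{p2estimate} delivers decay $2^{-j\kappa_0}$, not $2^{-j\kappa}$, and after Cauchy--Schwarz the comparison is with $\mu(2^jB)^{1/2}\lesssim 2^{jn/2}\mu(B)^{1/2}$; the annulus summability therefore rests on $\kappa_0>n/2$ (which is built into $\kappa_0=[n/2]+1$), while the hypothesis $\kappa>\kappa_0$ is what makes Proposition~\ref{p2estimate} itself available, not what controls the sum in $j$.
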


\begin{lemma}\label{cominter}
Suppose that $(X,d,\mu)$ is a space of homogeneous type with a dimension $n$ and that $L$ satisfies the property
(${\rm PEV}_{2,2}^{\kappa,m}$) for some $m>0$ and $\kappa>\kappa_{0}$. Suppose {$\frac{2n}{2\kappa_0+n}< p_{1}<p_{2}<\infty$,} $0<\theta<1$,
 and $1/p=(1-\theta)/p_{1}+\theta/p_{1}$. Then
\begin{align*}
[H_{L}^{p_{1}}(X),H_{L}^{p_{2}}(X)]_{\theta}=H_{L}^{p}(X),
\end{align*}
where we recall that $[\cdot,\cdot]$ denotes the complex interpolation bracket.
\end{lemma}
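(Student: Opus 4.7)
The plan is to realize $H_L^p(X)$ as a retract of the tent space $T^p(X \times \mathbb{R}_+)$ equipped with the Coifman--Meyer--Stein area norm $\|F\|_{T^p} := \|A(F)\|_{L^p(X)}$, where $A(F)(x) = \bigl(\int_0^\infty \int_{d(x,y) < \tau^{1/m}} |F(y,\tau)|^2 \frac{d\mu(y)}{V(x,\tau^{1/m})} \frac{d\tau}{\tau}\bigr)^{1/2}$, and then transfer the known complex interpolation identity $[T^{p_1}, T^{p_2}]_\theta = T^p$ via the standard principle that complex interpolation commutes with retracts. Thus the argument reduces to (i) constructing bounded analysis and synthesis maps $\iota : H_L^p \to T^p$ and $\pi : T^p \to H_L^p$ with $\pi \circ \iota = I$ at both endpoints, and (ii) invoking tent-space interpolation.

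For the analysis side, set $\iota_\phi(f)(x,\tau) := \phi(\tau L)f(x)$; by the very definition of $H_L^p$ via $S_{L,\phi}$ one has $\|\iota_\phi(f)\|_{T^p} = \|f\|_{H_L^p}$, so $\iota_\phi$ is isometric. For the synthesis side, pick an auxiliary $\psi \in C_c^\infty((1/4,1))$ normalised by the Calder\'on condition $\int_0^\infty \psi(\tau)\phi(\tau)\,\frac{d\tau}{\tau} = 1$, and define
\[
\pi_\psi(F)(x) := \int_0^\infty \psi(\tau L)\bigl(F(\cdot,\tau)\bigr)(x)\,\frac{d\tau}{\tau}.
\]
The spectral theorem then gives $\pi_\psi \circ \iota_\phi = I$ on a dense subspace of $L^2 \cap H_L^p$. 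Boundedness $\pi_\psi: T^2 \to H_L^2 = L^2$ is immediate from the quadratic estimate. For $\pi_\psi: T^1 \to H_L^1$, I will use the tent atomic decomposition: a $T^1$-atom $A$ is supported in a Carleson tent $\widehat{B}$ over a ball $B$ with $\|A\|_{L^2(X \times \mathbb{R}_+;\, d\mu\,d\tau/\tau)} \le \mu(B)^{-1/2}$, and I will show that $\pi_\psi(A)$ is (up to a uniform constant) an $(1,2,M,\epsilon)$-molecule adapted to $B$. Writing $\pi_\psi(A) = L^M b$ with $b := \int_0^\infty (\tau L)^{-M}\psi(\tau L)\bigl(A(\cdot,\tau)\bigr)\tau^M \frac{d\tau}{\tau}$ reduces the required annular size bounds on $U_j(B)$ to $L^2$ off-diagonal estimates for the compactly supported multiplier $\tilde\psi(\tau L) := (\tau L)^{-M}\psi(\tau L)$, which are exactly of the type supplied by Proposition \ref{p2estimate}. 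Lemma \ref{moleculardecomposition} then upgrades this to $\|\pi_\psi(F)\|_{H_L^1} \lesssim \|F\|_{T^1}$.

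Putting the pieces together, $H_L^{p_i}$ is a retract of $T^{p_i}$ for $i=1,2$ via the \emph{same} pair $(\iota_\phi,\pi_\psi)$ (so compatibility of the interpolation pair is automatic), and the retract principle combined with $[T^{p_1},T^{p_2}]_\theta = T^p$ then yields $[H_L^{p_1},H_L^{p_2}]_\theta = H_L^p$. The \textbf{main obstacle} I anticipate is the tent-atom to molecule estimate: because we only assume the polynomial-type bound $({\rm PEV}_{2,2}^{\kappa,m})$ rather than Gaussian or exponential decay, $\psi(\tau L)$ only has polynomial off-diagonal decay, so obtaining summable control over the annuli $U_j(B)$ forces us to take the molecular vanishing order $M$ comfortably larger than $\kappa_0 = [n/2]+1$ and to use the sharpest form of Proposition \ref{p2estimate}. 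This is precisely where the hypothesis $\kappa > \kappa_0$ is consumed.
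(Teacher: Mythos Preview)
Your proposal is correct and follows essentially the same route as the paper: both arguments realize $H_L^p$ as a retract of the tent space $T_2^p$ via the isometric embedding $f\mapsto \phi(\tau L)f$ and the synthesis operator $\pi_L$ (the paper uses the same $\phi$ for both legs rather than an auxiliary $\psi$, which is cosmetic), reduce the key boundedness $\pi_L:T_2^1\to H_L^1$ to showing that tent atoms are sent to $(1,2,M,\epsilon)$-molecules via the off-diagonal estimate of Proposition~\ref{p2estimate} (this is exactly Lemma~\ref{tentatom} in the paper), and then transfer the known complex interpolation of tent spaces through the retract, citing the HLMMY template for the remaining routine steps.
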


\begin{lemma}\label{Hp}
Suppose that $(X,d,\mu)$ is a space of homogeneous type with a dimension $n$ and that $L$ satisfies the property
 (${\rm PEV}_{p_{0},2}^{\kappa,m}$) for some $1\leq p_{0}<2$, $m>0$ and $\kappa>\kappa_{0}$. Then
  for any $p\in (p_{0},2]$, we have $H_{L}^{p}(X)=L^{p}(X)$ with equivalent norm, that is
\begin{align*}
\|f\|_{H_{L}^{p}(X)}\approx \|f\|_{L^{p}(X)}.
\end{align*}
\end{lemma}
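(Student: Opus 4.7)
My plan is to prove the two-sided estimate $\|S_{L,\phi}f\|_{L^p(X)}\approx\|f\|_{L^p(X)}$ on $L^2(X)\cap L^p(X)$ via a tent-space argument, and then pass to completions. First, choose a companion $\psi\in C_c^\infty(\mathbb{R})$ with $\mathrm{supp}\,\psi\subset(1/4,1)$ and $\int_0^\infty \phi(t)\psi(t)\tfrac{dt}{t}=1$. The spectral theorem then yields the Calder\'on reproducing formula $f=\int_0^\infty \psi(\tau L)\phi(\tau L)f\,\tfrac{d\tau}{\tau}$ (weakly in $L^2$), and Fubini together with the doubling property (\ref{doubling}) shows that the vertical and conical square functions of $L$ are comparable in $L^2$, giving $\|S_{L,\phi}f\|_{L^2}\approx\|f\|_{L^2}$ and settling the endpoint $p=2$.

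For $p_0<p<2$, I would introduce the tent space $T^p(X)$ with norm $\|F\|_{T^p}:=\|A(F)\|_{L^p}$, where
\[
A(F)(x):=\left(\int_0^\infty\int_{d(x,y)<\tau^{1/m}}|F(y,\tau)|^2\,\frac{d\mu(y)}{V(x,\tau^{1/m})}\,\frac{d\tau}{\tau}\right)^{1/2},
\]
together with the operators $Q_\phi f(y,\tau):=\phi(\tau L)f(y)$ and $\pi_\psi F:=\int_0^\infty\psi(\tau L)F(\cdot,\tau)\tfrac{d\tau}{\tau}$, so that $\|S_{L,\phi}f\|_{L^p}=\|Q_\phi f\|_{T^p}$ and the reproducing formula reads $\pi_\psi\circ Q_\phi=\mathrm{Id}$ on $L^2$. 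The goal reduces to the two mapping properties
\[
Q_\phi\colon L^p(X)\longrightarrow T^p(X),\qquad \pi_\psi\colon T^p(X)\longrightarrow L^p(X),
\]
for $p_0<p\leq 2$: the first gives $\|S_{L,\phi}f\|_p\lesssim\|f\|_p$, while combining the second with $\pi_\psi\circ Q_\phi=\mathrm{Id}$ gives $\|f\|_p\lesssim\|S_{L,\phi}f\|_p$.

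I would obtain $Q_\phi\colon L^p\to T^p$ by interpolating the $L^2\to T^2$ isomorphism with a weak-type $(p_0,p_0)$ bound, the latter proved by a Calder\'on--Zygmund decomposition of $f\in L^{p_0}$ at height $\alpha$: the good part is handled through the $L^2$-theory, and for the bad part $b=\sum_i b_i$ (with $b_i$ supported in a CZ ball $B_i$) I would decompose $X\setminus 4B_i$ into the annuli $U_j(B_i)$ and invoke the compactly supported off-diagonal machinery built from Lemma \ref{global00} (applied to the symbols $\delta_{\tau^{-1}}\phi$, together with the amalgam-block partition (\ref{divide})) to sum the contributions against the decay factor $(1+2^j)^{-n-\kappa}$ inherited from $({\rm PEV}_{p_0,2}^{\kappa,m})$. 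For $\pi_\psi\colon T^p\to L^p$ I would appeal to the atomic decomposition of tent spaces: any $F\in T^p\cap T^2$ admits a representation $F=\sum_k\lambda_k A_k$ with $A_k$ a $T^p$-atom supported in a tent over a ball $B_k$ and $(\sum_k|\lambda_k|^p)^{1/p}\lesssim\|F\|_{T^p}$, and verify $\|\pi_\psi A_k\|_{L^p}\leq C$ uniformly by splitting $\pi_\psi A_k$ over the annuli $U_j(B_k)$, estimating the $L^2$-norm on each annulus by the off-diagonal behaviour of $\psi(\tau L)$ provided by $({\rm PEV}_{p_0,2}^{\kappa,m})$, and then converting to $L^p$ via H\"older against the volume $\mu(U_j(B_k))^{1/p-1/2}$.

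The principal obstacle is the weak-type $(p_0,p_0)$ endpoint for $S_{L,\phi}$: because $L$ satisfies only the polynomial off-diagonal bound $({\rm PEV}_{p_0,2}^{\kappa,m})$ rather than a Gaussian one, one cannot directly invoke the classical Blunck--Kunstmann scheme, and the tails of the bad part must be controlled through the amalgam-block machinery of Section 2 together with the compactly supported multiplier estimates. The hypothesis $\kappa>\kappa_0=\bigl[n/2\bigr]+1$ enters precisely to make the geometric series over $j$ absolutely convergent after the volume factor $V(x,2^j r_{B})\lesssim 2^{jn}V(x,r_{B})$ from (\ref{doubling}) has been absorbed.
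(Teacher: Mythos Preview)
Your tent-space strategy is a reasonable blueprint, but the weak-type $(p_0,p_0)$ step for $S_{L,\phi}$ has a genuine gap. The off-diagonal information you invoke---whether from Lemma~\ref{global00} (which is a global $L^{p_0}\to L^2$ bound with no localization), from $({\rm PEV}_{p_0,2}^{\kappa,m})$ directly, or from Proposition~\ref{p2estimate}---controls $\|\chi_{U_j(B_i)}\phi(\tau L)\chi_{B_i}\|_{p_0\to 2}$ only at the scale of the multiplier: for $\phi(\tau\cdot)$ supported in $[\tfrac{1}{4\tau},\tfrac{1}{\tau}]$, Proposition~\ref{p2estimate} gives a bound of order $2^{-j\kappa_0}(\tau^{-1/m}r_{B_i})^{-\kappa_0+n\sigma_{p_0}}(\tau^{-1/m}r)^{-c_0}$, which blows up as $\tau\to\infty$. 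Hence $\int_0^\infty\|\chi_{U_j(B_i)}\phi(\tau L)b_i\|_2^2\,\tfrac{d\tau}{\tau}$ cannot be controlled by spatial off-diagonal decay alone. The Calder\'on--Zygmund bad piece $b_i$ carries no cancellation usable against $\phi(\tau L)$ (mean zero is irrelevant for an operator without kernel regularity), so the large-$\tau$ regime remains uncontrolled. The missing ingredient is an operator-side smoothing: one must insert $(I-e^{-r_{B_i}^mL})^M$ to gain the factor $\min\{1,(\tau^{-1/m}r_{B_i})^{mM}\}$ before any off-diagonal argument can close.

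The paper sidesteps weak-type bounds entirely. For $H_L^p\subset L^p$ it combines $H_L^1\subset L^1$ (via molecules) with $H_L^2=L^2$ and complex interpolation (Lemma~\ref{cominter}). For $L^p\subset H_L^p$ it applies an Auscher-type criterion (Lemma~\ref{boundtest}): one verifies the two off-diagonal conditions \eqref{test01}--\eqref{test02} for $T=S_{L,\phi}$ and $A_{r_B}=I-(I-e^{-r_B^mL})^M$, and $L^p$-boundedness for $p_0<p\le2$ follows directly, with no endpoint estimate needed. The factor $(I-e^{-r_B^mL})^M$ built into \eqref{test01} is exactly the smoothing you are missing; it is what makes the $\tau$-integral in the square function converge when coupled with Proposition~\ref{p2estimate}. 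A secondary issue: tent spaces $T^p$ for $1<p<2$ do not admit an atomic decomposition of the kind you describe, so your $\pi_\psi$ argument would also have to proceed by interpolation rather than atoms.
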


Proof of Lemmas~\ref{moleculardecomposition}, \ref{cominter} and  \ref{Hp} will be given in Section 5.

\medskip

\section{Off-diagonal estimates}
\setcounter{equation}{0}

This section is devoted to presenting some off-diagonal estimates for different kinds of spectral multipliers.

\subsection{Off-diagonal estimates for compactly supported spectral multipliers}
In the following two subsections, we will develop a new method to obtain off-diagonal estimates for compactly supported
spectral multipliers with oscillatory terms by means of the theory of amalgam block and some techniques related to commutators
(see Lemma \ref{pro2} and Lemma \ref{oscillatory2}). These estimates play crucial roles in obtaining the sharp boundedness for
 Schr\"{o}dinger groups. In the first step, we show off-diagonal estimates for compactly supported spectral multipliers without
  oscillatory term. That is, we will show the following proposition.
\begin{proposition}\label{p2estimate}
Let $p_{0}\leq p\leq 2$, then there exist constants $C, c_{0}>0$ such that for any ball $B\subset X$ with radius $r_{B}$ and for any $\lambda>0$, $j\geq5 $,
\begin{align}\label{p2es}
\|\chi_{U_{j}(B)}F(L)\chi_{B}\|_{p\rightarrow 2}\leq C2^{-j\kappa_{0}}\mu(B)^{-\sigma_{p}}
(\sqrt[m]{\lambda}r_{B})^{-\kappa_{0}+\frac{n}{2}}(\sqrt[m]{\lambda}r)^{-c_{0}}\|\delta_{\lambda}F\|_{C^{\kappa_{0}+1}}
\end{align}
for all Borel functions $F$ such that supp$F\subseteq [-\lambda,\lambda]$, where $r= {\rm min}\{r_{B},\lambda^{-1/m}\}$.
\end{proposition}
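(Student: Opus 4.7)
My plan follows the three-step programme sketched in the introduction, specialised here to the non-oscillatory case. The core idea is to rewrite $F(L)$ so that its off-diagonal behaviour is governed by a single copy of the semigroup $e^{-L/\lambda}$, derive an amalgam-block $L^2$--$L^2$ bound at the natural scale $\lambda^{-1/m}$ using $({\rm PEV}_{2,2}^{\kappa,m})$, sharpen it through a commutator argument, and finally sum the resulting bounds over amalgam blocks at the combined scale $r=\min\{r_B,\lambda^{-1/m}\}$.

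\medskip

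\textbf{Reduction to one semigroup.} Following the proof of Lemma \ref{global00}, set $E(\tau):=F(-\lambda\log\tau)\tau^{-1}$ so that $F(L)=E(e^{-L/\lambda})\,e^{-L/\lambda}$, and use Fourier inversion to write $E(e^{-L/\lambda})=\int_{-\infty}^{+\infty}\hat E(\xi)\,e^{i\xi e^{-L/\lambda}}\,d\xi$. Each $e^{i\xi e^{-L/\lambda}}$ is unitary on $L^2$, so the off-diagonal behaviour of $F(L)$ between two amalgam blocks at scale $\lambda^{-1/m}$ is inherited from $e^{-L/\lambda}$, modulo the scalar cost $\|\hat E\|_1\lesssim\|\delta_\lambda F\|_{C^{1}}$ established in \eqref{Cnorm}. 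Partitioning $X$ via \eqref{divide} at scale $\lambda^{-1/m}$ and applying $({\rm PEV}_{2,2}^{\kappa,m})$ produces a clean amalgam bound of the form $\|\chi_{Q_\alpha(\lambda^{-1/m})}F(L)\chi_{Q_\beta(\lambda^{-1/m})}\|_{2\to 2}\lesssim \|\delta_\lambda F\|_{C^1}\bigl(1+\sqrt[m]{\lambda}\,d(x_\alpha,x_\beta)\bigr)^{-n-\kappa}$.

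\medskip

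\textbf{Commutator enhancement and geometric summation.} The polynomial decay above is not, by itself, sharp enough: it delivers only $(2^j\sqrt[m]{\lambda} r_B)^{-n-\kappa}$ between $B$ and $U_j(B)$, whereas I need $2^{-j\kappa_0}(\sqrt[m]{\lambda} r_B)^{-\kappa_0}$ after volume rescaling. To extract the missing $\kappa_0$ powers, I pick a smooth cutoff $\eta$ equal to $1$ on $B$ and vanishing on $U_j(B)$, rewrite $\chi_{U_j(B)}F(L)\chi_B=\chi_{U_j(B)}[\eta,F(L)]\chi_B$, and iterate the commutator identity $[\eta,e^{-sL}]=-\int_0^s e^{-(s-u)L}[L,\eta]e^{-uL}\,du$ inside the $\xi$-integral $\kappa_0$ times. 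Each iteration gains a factor of order $(\sqrt[m]{\lambda}\cdot 2^j r_B)^{-1}$ and consumes one derivative of $\delta_\lambda F$; after $\kappa_0$ steps the total cost is exactly $\|\delta_\lambda F\|_{C^{\kappa_0+1}}$. Covering $B$ and $U_j(B)$ by amalgam blocks at scale $r=\min\{r_B,\lambda^{-1/m}\}$, whose number is at most $C(2^j r_B/r)^n$ by the strong homogeneity inequality \eqref{doubling}, and summing through Cauchy--Schwarz and $\ell^p\subset\ell^2$ as in the proof of Lemma \ref{heatbound}, produces the desired $L^p\to L^2$ bound. The volume comparison $V(x_B,r_B)\le C(\sqrt[m]{\lambda} r_B)^{n}V(x_B,\lambda^{-1/m})$ (valid when $r_B\ge\lambda^{-1/m}$) converts the natural $V_{\lambda^{-1/m}}^{\sigma_p}$ normalisation into $\mu(B)^{-\sigma_p}(\sqrt[m]{\lambda} r_B)^{n\sigma_p}$, and the residual factor $(\sqrt[m]{\lambda} r)^{-c_0}$ absorbs the regime $r_B<\lambda^{-1/m}$, where the amalgam scale coincides with the ball scale and a single additional commutator step compensates the loss.

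\medskip

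\textbf{Main obstacle.} The central difficulty, highlighted in items (1)--(2) of the introduction, is that the usual paths to sharp compactly supported $L^2$ off-diagonal estimates --- Phragm\'en--Lindel\"of complexification (requires Gaussian bounds with $m\ge 2$) and the Davies--Duong--Robinson subharmonic method (not sharp enough after a single layer of functional calculus) --- are both unavailable here. The Fourier--unitary representation of Step 1 is what bypasses this obstruction: by concentrating every operator-theoretic step into one copy of $e^{-L/\lambda}$, it renders the commutator iteration tractable. The delicate bookkeeping is then to verify that the commutator identity, applied pointwise in $\xi$ under the integral $\int\hat E(\xi)e^{i\xi e^{-L/\lambda}}d\xi$, can be re-integrated without loss of the $\|\hat E\|_1$-type control, and to coordinate the amalgam scale $r=\min\{r_B,\lambda^{-1/m}\}$ with the two competing length scales $r_B$ and $\lambda^{-1/m}$ so that \emph{precisely} the exponents $-\kappa_0+n\sigma_p$ and the small correction $c_0$ appearing in \eqref{p2es} emerge.
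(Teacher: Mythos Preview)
Your first step --- writing $F(L)=E(e^{-L/\lambda})e^{-L/\lambda}$ and using Fourier inversion --- is exactly the paper's opening move. But the commutator enhancement you propose does not work in this setting, and the mechanism you describe for how derivatives of $F$ are consumed is not the right one.

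Concretely, your plan is to pick a smooth cutoff $\eta$ with $\eta\equiv 1$ on $B$, $\eta\equiv 0$ on $U_j(B)$, write $\chi_{U_j(B)}F(L)\chi_B=\chi_{U_j(B)}[\eta,F(L)]\chi_B$, and iterate the Duhamel identity $[\eta,e^{-sL}]=-\int_0^s e^{-(s-u)L}[L,\eta]e^{-uL}\,du$. There are three problems. First, on a general space of homogeneous type only Lipschitz bump functions are available, not smooth ones. Second, and more seriously, under the sole hypothesis $({\rm PEV}_{p_0,2}^{\kappa,m})$ you have no control whatsoever on the commutator $[L,\eta]$: $L$ is an abstract self-adjoint operator of ``order $m$'', and there is no reason for $[L,\eta]$ to be bounded (or even for $\eta f\in\mathcal D(L)$ when $f\in\mathcal D(L)$). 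Iterating $\kappa_0$ times makes this worse, as it brings in higher commutators $[L,[L,\eta]],\dots$. Third, even if these issues were absent, nothing in your Duhamel iteration produces powers of $\xi$, so your assertion that each step ``consumes one derivative of $\delta_\lambda F$'' is not justified; the link between commutator depth and $\|\delta_\lambda F\|_{C^{\kappa_0+1}}$ is missing.

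The paper avoids commuting with $L$ entirely. The weight is the \emph{distance function} $d(\cdot,x_\gamma)/r$ (Lipschitz, always available), and it is commuted not through $e^{-sL}$ but through the \emph{bounded} operator $e^{i\xi e^{-L/\lambda}}$, via Lemma~\ref{du} applied with $T=e^{-L/\lambda}$. This reduces matters to bounding ${\rm Ad}_{\ell,r}^k(e^{-L/\lambda})$ on $L^2$, which follows directly from $({\rm PEV}_{2,2}^{\kappa,m})$ by a triangle-inequality expansion over amalgam blocks (Lemma~\ref{ppp1}, estimate~\eqref{comheat}) --- no derivatives of $L$ appear. Crucially, the formula $[\eta,e^{i\xi T}]=i\xi\int_0^1 e^{is\xi T}[\eta,T]e^{i(1-s)\xi T}\,ds$ manufactures a factor $\xi$ at each step, so $\kappa_0$ iterations produce $(1+|\xi|)^{\kappa_0}$, and it is \emph{this} that forces $\int|\hat E(\xi)|(1+|\xi|)^{\kappa_0}\,d\xi\lesssim\|\delta_\lambda F\|_{C^{\kappa_0+1}}$. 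Structurally, the paper also inserts an intermediate amalgam block $Q_\gamma(r)$ between $Q_\alpha(r)$ and $Q_\beta(r)$ and splits according to whether $x_\gamma$ is closer to $x_\alpha$ or to $x_\beta$; in one regime the commutator argument is run on $E(e^{-L/\lambda})$, in the other the raw $({\rm PEV}_{p,2}^{\kappa,m})$ decay of $e^{-L/\lambda}$ suffices (this is Lemma~\ref{pro2}). Your sketch is missing both this splitting and the correct commutator target.
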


To begin with, we set $\Gamma(j,0)=1$ for $j\geq 1$, and we define $\Gamma(j,k)$ inductively by $\Gamma(j,k+1)
:=\sum_{\ell=k}^{j-1}\Gamma(\ell,k)$ for $1\leq k\leq j-1$. For a given $r>0$, we denote commutator inductively by
\begin{align*}
&{\rm Ad}_{\ell,r}^{0}(T):=T;\\
&{\rm Ad}_{\ell,r}^{k}(T):={\rm Ad}_{\ell,r}^{k-1}\left(\frac{d(x_{\ell},\cdot)}{r}T-T\frac{d(x_{\ell},\cdot)}{r}\right), k\geq 1.
\end{align*}

To continue, we recall a known formula for commutator of a Lipschitz function and an operator $T$ on $L^{2}(X)$.
\begin{lemma}\label{du}
Let $T$ be a self-adjoint operator on $L^{2}(X)$. Assume that for some $\eta\in {\rm Lip}(X)$, the commutator $[\eta, T]$,
 defined by $[\eta, T]f:=\eta Tf-T(\eta f)$, satisfies that for any $f\in \mathcal{D}(T)$, $\eta f\in \mathcal{D}(T)$ and
 that $[\eta, T]$ is bounded on $L^{2}(X)$. Then the following formula holds:
\begin{align*}
[\eta,e^{itT}]f=it\int_{0}^{1}e^{istT}[\eta, T]e^{i(1-s)tT}fds,  \ \ \forall t\in\mathbb{R},\ \forall f\in L^{2}(X).
\end{align*}
\end{lemma}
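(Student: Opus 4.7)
\textbf{Proof plan for Lemma \ref{du}.} The identity is a standard calculus-type formula on the one-parameter unitary group $U(s):=e^{isT}$, and the natural route is to differentiate an auxiliary $L^{2}$-valued function of $s$ and integrate. Fix $t\in\mathbb{R}$ and, for now, $f\in\mathcal{D}(T)$. Define
\begin{equation*}
\phi(s):=e^{istT}\,\eta\,e^{i(1-s)tT}f,\qquad s\in[0,1].
\end{equation*}
Then $\phi(0)=\eta\,e^{itT}f$ and $\phi(1)=e^{itT}(\eta f)$, so the endpoint difference satisfies $\phi(1)-\phi(0)=[e^{itT},\eta]f=-[\eta,e^{itT}]f$. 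Hence it suffices to show that $\phi$ is strongly differentiable on $[0,1]$ with
\begin{equation*}
\phi'(s)=-it\,e^{istT}[\eta,T]e^{i(1-s)tT}f,
\end{equation*}
and then integrate from $0$ to $1$.

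The differentiation will be carried out in the following order. First, by Stone's theorem, $\{e^{i\tau T}\}_{\tau\in\mathbb{R}}$ is a strongly continuous unitary group on $L^{2}(X)$ and, for any $g\in\mathcal{D}(T)$, the map $\tau\mapsto e^{i\tau T}g$ is strongly $C^{1}$ with derivative $iT e^{i\tau T}g=ie^{i\tau T}Tg$. Apply this with $g=f\in\mathcal{D}(T)$: the vector $e^{i(1-s)tT}f$ lies in $\mathcal{D}(T)$, and by hypothesis multiplication by $\eta$ preserves $\mathcal{D}(T)$, so $\eta\,e^{i(1-s)tT}f\in\mathcal{D}(T)$ as well. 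Therefore both factors in $\phi(s)$ are strongly differentiable, and the product rule (which is valid because one factor takes values in $\mathcal{D}(T)$ and the other is a uniformly bounded unitary) gives
\begin{equation*}
\phi'(s)=itT\,e^{istT}\eta\,e^{i(1-s)tT}f-it\,e^{istT}\eta\,T\,e^{i(1-s)tT}f=it\,e^{istT}(T\eta-\eta T)e^{i(1-s)tT}f,
\end{equation*}
which is exactly $-it\,e^{istT}[\eta,T]e^{i(1-s)tT}f$. The strong continuity of $\phi'$ on $[0,1]$ follows from the strong continuity of the unitary group together with the boundedness of $[\eta,T]$ on $L^{2}(X)$, so the fundamental theorem of calculus for Bochner integrals in $L^{2}(X)$ yields
\begin{equation*}
-[\eta,e^{itT}]f=\phi(1)-\phi(0)=-it\int_{0}^{1}e^{istT}[\eta,T]e^{i(1-s)tT}f\,ds,
\end{equation*}
which is the claimed identity on $\mathcal{D}(T)$.

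Finally, I would extend the formula from $\mathcal{D}(T)$ to all of $L^{2}(X)$ by density. The right-hand side is controlled in norm by $|t|\,\|[\eta,T]\|_{2\to 2}\,\|f\|_{2}$ because the $e^{i\tau T}$ are unitaries, while the left-hand side is bounded by $2\|\eta\|_{\infty}\|f\|_{2}$ on bounded $\eta$ and, more generally, extends to a bounded operator once $[\eta,T]$ is bounded; thus both sides define continuous maps $L^{2}(X)\to L^{2}(X)$. Since $\mathcal{D}(T)$ is dense in $L^{2}(X)$ (it is the domain of a densely defined self-adjoint operator), the identity passes to every $f\in L^{2}(X)$.

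The main technical obstacle I anticipate is the rigorous justification of the product-rule step, since $T$ is unbounded: one has to verify that $e^{i(1-s)tT}f\in\mathcal{D}(T)$ (handled by $\mathcal{D}(T)$-invariance of the unitary group), that $\eta\cdot e^{i(1-s)tT}f\in\mathcal{D}(T)$ (this is precisely the hypothesis $\eta\,\mathcal{D}(T)\subset\mathcal{D}(T)$), and that the two-sided strong derivative exists uniformly in $s$. Once these domain issues are checked, the computation is formal and the density argument closes the proof.
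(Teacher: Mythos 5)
The paper does not prove this lemma; it simply refers to \cite{MM}, so there is no ``paper's own proof'' to compare against beyond that citation. Your strategy --- differentiate $\phi(s)=e^{istT}\eta\,e^{i(1-s)tT}f$ in $s$, identify $\phi'$, and integrate --- is the standard (and almost certainly the cited) route, and your computation of the formal derivative is correct.

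There is, however, a genuine gap in the way you carry out the differentiation. In the setting of this paper $\eta$ is a Lipschitz function such as $d(x_\ell,\cdot)/r$, which is \emph{not} in $L^\infty(X)$, so multiplication by $\eta$ is an unbounded operator on $L^2(X)$. If you take $f\in\mathcal{D}(T)$ only, then $Te^{i(1-s)tT}f$ lies in $L^2(X)$ but in general \emph{not} in $\mathcal{D}(T)$, and the hypothesis $\eta\,\mathcal{D}(T)\subset\mathcal{D}(T)$ gives you no control over $\eta\,Te^{i(1-s)tT}f$ --- this object need not even belong to $L^2(X)$. So the intermediate term $it\,e^{istT}\eta\,T\,e^{i(1-s)tT}f$ in your product rule is not well defined, and the map $s\mapsto\eta\,e^{i(1-s)tT}f$ is, in fact, not strongly differentiable for a generic $f\in\mathcal{D}(T)$ when $\eta$ is unbounded. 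There are two clean ways to repair this. One is to start with $f\in\mathcal{D}(T^2)$: then $Te^{i(1-s)tT}f\in\mathcal{D}(T)$, the closed graph theorem shows that multiplication by $\eta$ is bounded from $(\mathcal{D}(T),\text{graph norm})$ into $L^2(X)$ (it is a closed map between Banach spaces since pointwise a.e.\ convergence identifies the limit), and the product rule computation goes through as you wrote it; the extension to $f\in L^2(X)$ is then by density as in your last paragraph, since $\mathcal{D}(T^2)$ is also dense. The other is to work weakly: for $f,g\in\mathcal{D}(T)$, set $\Psi(s)=\langle\eta\,e^{i(1-s)tT}f,\,e^{-istT}g\rangle$, split the difference quotient symmetrically between the two slots so that each derivative lands on a separate factor and is paired against a fixed $L^2$ vector, and use self-adjointness of $T$ together with $T\eta u=\eta Tu+[T,\eta]u$ (which also shows $\eta Tu\in L^2$ whenever $u,\eta u\in\mathcal{D}(T)$) to obtain $\Psi'(s)=-it\langle e^{istT}[\eta,T]e^{i(1-s)tT}f,g\rangle$; integrating and letting $g$ run over the dense set $\mathcal{D}(T)$ gives the vector identity for $f\in\mathcal{D}(T)$. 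Either repair closes the argument; as written, though, the differentiability claim on $\mathcal{D}(T)$ is not justified.
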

\begin{proof}
The proof was given in \cite{MM}.
\end{proof}

Next, we recall a criterion for $L^{p}-L^{q}$ boundedness for linear operators.
\begin{lemma}\label{criterion}
Let $T$ be a linear operator and $1\leq p\leq q\leq\infty$. For every $r>0$,
\begin{align*}
\|T\|_{p\rightarrow q}\leq \sup\limits_{j}\sum_{i}\|\chi_{Q_{i}(r)}T\chi_{Q_{j}(r)}\|_{p\rightarrow q}
+\sup\limits_{i}\sum_{j}\|\chi_{Q_{i}(r)}T\chi_{Q_{j}(r)}\|_{p\rightarrow q},
\end{align*}
where $\{Q_{i}(r)\}_{i}$ is a countable partition of $X$.
\end{lemma}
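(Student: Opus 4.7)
The plan is to read the desired inequality as a Schur-type test applied to the matrix $A = (a_{ij})$ with entries $a_{ij} := \|\chi_{Q_{i}(r)} T \chi_{Q_{j}(r)}\|_{p \to q}$, and then transfer the resulting sequence-space bound back to an $L^p(X) \to L^q(X)$ bound using that $\{Q_{i}(r)\}_i$ is a pairwise disjoint partition of $X$.

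First I would fix $f \in L^p(X)$ and set $b_j := \|\chi_{Q_{j}(r)} f\|_{p}$. Since the sets $Q_{j}(r)$ are mutually disjoint and cover $X$, one has $\|f\|_{p} = \|b\|_{\ell^p}$ (with the obvious modification at the endpoints). The same disjointness gives $\|Tf\|_{q}^{q} = \sum_i \|\chi_{Q_{i}(r)} Tf\|_{q}^{q}$, and the triangle inequality combined with the definition of $a_{ij}$ yields
\[
\|\chi_{Q_{i}(r)} Tf\|_{q} \leq \sum_j \|\chi_{Q_{i}(r)} T \chi_{Q_{j}(r)} f\|_{q} \leq \sum_j a_{ij}\, b_j = (Ab)_i,
\]
so that $\|Tf\|_{q} \leq \|Ab\|_{\ell^q}$.

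Next, using the hypothesis $p \leq q$ together with the embedding $\ell^p \subset \ell^q$ valid for counting-measure sequence spaces, I would further estimate $\|Ab\|_{\ell^q} \leq \|Ab\|_{\ell^p}$, reducing the problem to bounding the operator norm of $A$ on $\ell^p$. The classical Schur test then provides
\[
\|A\|_{\ell^p \to \ell^p} \leq \Bigl(\sup_i \sum_j a_{ij}\Bigr)^{1/p'}\Bigl(\sup_j \sum_i a_{ij}\Bigr)^{1/p} \leq \sup_i \sum_j a_{ij} + \sup_j \sum_i a_{ij},
\]
where the last step is the trivial bound $R^{1/p'} C^{1/p} \leq \max(R,C) \leq R + C$. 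Combining these estimates gives the claimed inequality.

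I do not anticipate any genuine obstacle in carrying this out; the argument is a soft Schur-type test, and the only routine care needed is handling the endpoint cases $p=1$ and $q=\infty$ (where some $\ell^p$ sums collapse into suprema) and noting that the disjointness of the partition is used exactly through the identity $\|f\|_{p}^{p} = \sum_j \|\chi_{Q_{j}(r)} f\|_{p}^{p}$.
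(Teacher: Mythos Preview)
Your argument is correct and is the standard Schur-test proof of this type of block-criterion. The paper itself does not give a proof but simply cites \cite[Lemma 2.1]{frame}; your approach is exactly the natural one and almost certainly coincides with what is done there. The only minor point worth making explicit is that the decomposition $Tf=\sum_j T(\chi_{Q_j(r)}f)$ and the subsequent triangle inequality are first applied to $f$ supported in finitely many blocks (so that no convergence issue arises before boundedness of $T$ is established), after which one passes to general $f$ by density; this is routine and does not affect your argument.
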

\begin{proof}
The proof was given in \cite[Lemma 2.1]{frame}.
\end{proof}
A direct consequence of this criterion is the following estimate.
\begin{lemma}\label{ppp1}
There exists a constant $C>0$, such that for any $\lambda, r>0$, $\xi\in\mathbb{R}$, $\ell\in \mathcal{I}_r$ and $0\leq k\leq \kappa_{0}$,
\begin{align*}
\big\|{\rm Ad}_{\ell,r}^{ k}(e^{i\xi e^{-\frac{L}{\lambda}}})\big\|_{2\rightarrow 2}\leq C(1+|\xi|)^{ k}(\sqrt[m]{\lambda}r)^{- k}.
\end{align*}
\end{lemma}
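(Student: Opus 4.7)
The plan is to prove the bound by induction on $k$. The base case $k = 0$ is immediate: $e^{i\xi e^{-L/\lambda}}$ is unitary on $L^2(X)$ by the spectral theorem, since $e^{-L/\lambda}$ is self-adjoint and $|e^{i\xi s}| = 1$. The inductive step rests on two ingredients: (i) an auxiliary commutator estimate for the semigroup itself, and (ii) Lemma~\ref{du} combined with a Leibniz expansion.

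\textbf{Step 1 (auxiliary estimate).} I would first prove
\[
\bigl\|{\rm Ad}_{\ell,r}^{k}(e^{-L/\lambda})\bigr\|_{2\to 2} \leq C (\sqrt[m]{\lambda}\,r)^{-k}, \qquad 0 \leq k \leq \kappa_0,
\]
by iterating a block decomposition. Set $\eta := d(x_\ell,\cdot)/r$. For any operator $S$, using that $\chi_{Q}$ and $\eta$ commute as multiplication operators,
\[
\chi_{Q_i}[\eta,S]\chi_{Q_j} = (\eta(x_i)-\eta(x_j))\chi_{Q_i} S \chi_{Q_j} + [(\eta-\eta(x_i))\chi_{Q_i}]\,S\,\chi_{Q_j} - \chi_{Q_i}\,S\,[(\eta-\eta(x_j))\chi_{Q_j}]
\]
holds on the partition $X = \bigcup_{i \in \mathcal{I}_{\lambda^{-1/m}}} Q_i(\lambda^{-1/m})$. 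Using $|\eta(x_i)-\eta(x_j)| \leq d(x_i,x_j)/r$ and $\|\eta-\eta(x_i)\|_{L^\infty(Q_i)} \leq C (\sqrt[m]{\lambda}\,r)^{-1}$, together with the $L^2$ off-diagonal estimate supplied by $({\rm PEV}_{2,2}^{\kappa,m})$, one bracket costs a single factor $(\sqrt[m]{\lambda}\,r)^{-1}$ and one power of the off-diagonal exponent in $(1+d(x_i,x_j)\sqrt[m]{\lambda})$. After $k$ iterations the exponent has dropped to $-n-\kappa+k$, and since $\kappa > \kappa_0 \geq k$ the sum over amalgam blocks converges uniformly in the free index, so Lemma~\ref{criterion} yields the claimed operator bound.

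\textbf{Step 2 (inductive step).} To pass from $k-1$ to $k$, apply Lemma~\ref{du} with the self-adjoint $T = e^{-L/\lambda}$ to obtain
\[
[\eta, e^{i\xi e^{-L/\lambda}}] = i\xi \int_0^1 e^{is\xi e^{-L/\lambda}}\,[\eta, e^{-L/\lambda}]\, e^{i(1-s)\xi e^{-L/\lambda}}\, ds,
\]
then apply ${\rm Ad}_{\ell,r}^{k-1}$ to both sides and expand on the triple product by the Leibniz identity $[\eta, ABC] = [\eta,A]BC + A[\eta,B]C + AB[\eta,C]$. This yields a finite sum, with combinatorial coefficients encoded by the numbers $\Gamma(j,k)$ introduced above, of terms
\[
{\rm Ad}_{\ell,r}^{j_1}\bigl(e^{is\xi e^{-L/\lambda}}\bigr)\, {\rm Ad}_{\ell,r}^{j_2+1}\bigl(e^{-L/\lambda}\bigr)\, {\rm Ad}_{\ell,r}^{j_3}\bigl(e^{i(1-s)\xi e^{-L/\lambda}}\bigr)
\]
with $j_1+j_2+j_3 = k-1$. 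The induction hypothesis bounds the two outer factors by $C(1+|\xi|)^{j_1}(\sqrt[m]{\lambda}\,r)^{-j_1}$ and $C(1+|\xi|)^{j_3}(\sqrt[m]{\lambda}\,r)^{-j_3}$ (using $s,1-s\in[0,1]$), while Step 1 bounds the middle factor by $C(\sqrt[m]{\lambda}\,r)^{-(j_2+1)}$. Multiplying, including the prefactor $|\xi|\leq 1+|\xi|$, and integrating over $s\in[0,1]$ produces the claimed bound $C(1+|\xi|)^k(\sqrt[m]{\lambda}\,r)^{-k}$.

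The main obstacle is Step 1: each bracket erodes one power of the off-diagonal decay of $e^{-L/\lambda}$, and the quantitative assumption $\kappa > \kappa_0 = [n/2]+1$ is precisely what guarantees that after $k \leq \kappa_0$ iterations the surviving exponent $-n-\kappa+k$ is still strictly less than $-n$, so that the amalgam sum in Lemma~\ref{criterion} converges. A secondary technical point is the bookkeeping of the Leibniz expansion for iterated commutators acting on a triple product, which is exactly what the $\Gamma(j,k)$ coefficients are designed to track.
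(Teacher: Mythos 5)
Your proof is correct and follows essentially the same route as the paper: reduce the claim via Lemma~\ref{du} and an iterated Leibniz expansion to the estimate $\|{\rm Ad}_{\ell,r}^{k}(e^{-L/\lambda})\|_{2\to 2}\leq C(\sqrt[m]{\lambda}\,r)^{-k}$, and establish that auxiliary bound by a block decomposition over the partition $\{Q_i(\lambda^{-1/m})\}$ combined with Lemma~\ref{criterion} and the off-diagonal decay from $({\rm PEV}_{2,2}^{\kappa,m})$, where $\kappa>\kappa_0$ is exactly what makes the amalgam sum converge. One small inaccuracy: the coefficients appearing in the Leibniz expansion of ${\rm Ad}^{k-1}$ across a triple product are multinomial coefficients, not the $\Gamma(j,k)$ numbers of the paper (those encode the identity $\eta^{\kappa_0} T=\sum_k \Gamma(\kappa_0,k)\,{\rm Ad}^k(T)\,\eta^{\kappa_0-k}$ used later), but this is harmless since only finiteness of the sum is used.
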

\begin{proof}
By Lemma \ref{du}, for all $\ell>0$,
\begin{align*}
{\rm Ad}_{\ell,r}(e^{i\xi e^{-\frac{L}{\lambda}}})f=i\xi\int_{0}^{1}e^{is\xi e^{-\frac{L}{\lambda}}}{\rm Ad}_{\ell,r}
(e^{-\frac{L}{\lambda}})e^{i(1-s)\xi e^{-\frac{L}{\lambda}}}fds.
\end{align*}
Note that $e^{-\frac{L}{\lambda}}$ is a bounded operator on $L^{2}(X)$. Repeatedly, it can be reduced to showing that
\begin{align}\label{comheat}
\big\|{\rm Ad}_{\ell,r}^{ k}(e^{-\frac{L}{\lambda}})\big\|_{2\rightarrow 2}\leq C(\sqrt[m]{\lambda}r)^{- k}, 0\leq k\leq \kappa_{0}.
\end{align}

By Lemma \ref{criterion}, it suffices to show that for every $0\leq  k \leq \kappa_{0}$, $\ell>0$, $\lambda>0$, $r\leq \lambda^{-1/m}$,
\begin{align}\label{ss1}
\sup\limits_{\alpha\in \mathcal{I}_{\lambda^{-1/m}}}\sum_{\beta\in \mathcal{I}_{\lambda^{-1/m}}}
\big\|\chi_{Q_{\beta}(\lambda^{-1/m})}{\rm Ad}_{\ell,r}^{ k}(e^{-\frac{L}{\lambda}})\chi_{Q_{\alpha}(\lambda^{-1/m})}
\big\|_{2\rightarrow 2}\leq C(\sqrt[m]{\lambda}r)^{- k}.
\end{align}
To show (\ref{ss1}), we note that
\begin{align*}
\chi_{Q_{\beta}(\lambda^{-1/m})}{\rm Ad}_{\ell,r}^{ k}(e^{-\frac{L}{\lambda}})\chi_{Q_{\alpha}(\lambda^{-1/m})}
=&\sum_{\gamma_{1}+\gamma_{2}+\gamma_{3}= k}\frac{ k!}{\gamma_{1}!\gamma_{2}!\gamma_{3}!}
\left(\frac{d(x_{\beta},x_{\ell})}{r}-\frac{d(x_{\alpha},x_{\ell})}{r}\right)^{\gamma_{1}}
\left(\frac{d(\cdot,x_{\ell})}{r}-\frac{d(x_{\beta},x_{\ell})}{r}\right)^{\gamma_{2}}\times\\
&\chi_{Q_{\beta}(\lambda^{-1/m})}e^{-\frac{L}{\lambda}}\chi_{Q_{\alpha}(\lambda^{-1/m})}
\left(\frac{d(x_{\alpha},x_{\ell})}{r}-\frac{d(\cdot,x_{\ell})}{r}\right)^{\gamma_{3}}.
\end{align*}
Observe that
\smallskip

$\bullet$
$\Big|\frac{d(x_{\beta},x_{\ell})}{r}-\frac{d(x_{\alpha},x_{\ell})}{r}\Big|\leq \frac{d(x_{\beta},x_{\alpha})}{r}$;

\smallskip

$\bullet$
$\Big|\frac{d(x,x_{\ell})}{r}-\frac{d(x_{\beta},x_{\ell})}{r}\Big|\chi_{Q_{\beta}(\lambda^{-1/m})}(x)\leq (\sqrt[m]{\lambda}r)^{-1}$;

\smallskip

$\bullet$
$\Big|\frac{d(x_{\alpha},x_{\ell})}{r}-\frac{d(y,x_{\ell})}{r}\Big|\chi_{Q_{\alpha}(\lambda^{-1/m})}(y)\leq (\sqrt[m]{\lambda}r)^{-1}$.

These, in combination with the estimate (\ref{inter}) with $a=\kappa$, yields
\begin{align*}
&\sum_{\beta\in \mathcal{I}_{\lambda^{-1/m}}}\big\|\chi_{Q_{\beta}(\lambda^{-1/m})}{\rm Ad}_{\ell,r}^{ k}
(e^{-\frac{L}{\lambda}})\chi_{Q_{\alpha}(\lambda^{-1/m})}\big\|_{2\rightarrow 2}\\
\leq & C\sum_{\gamma_{1}+\gamma_{2}+\gamma_{3}= k}\sum_{\beta\in \mathcal{I}_{\lambda^{-1/m}}}
\Big(\frac{d(x_{\beta},x_{\alpha})}{r}\Big)^{\gamma_{1}}(\sqrt[m]{\lambda}r)^{-\gamma_{2}-\gamma_{3}}
\big\|\chi_{Q_{\beta}(\lambda^{-1/m})}e^{-\frac{L}{\lambda}}\chi_{Q_{\alpha}(\lambda^{-1/m})}\big\|_{2\rightarrow 2}\\
\leq & C(\sqrt[m]{\lambda}r)^{- k}\sum_{\beta\in \mathcal{I}_{\lambda^{-1/m}}}
(1+\sqrt[m]{\lambda}d(x_{\beta},x_{\alpha}))^{\kappa_{0}}\big\|\chi_{Q_{\beta}(\lambda^{-1/m})}
e^{-\frac{L}{\lambda}}\chi_{Q_{\alpha}(\lambda^{-1/m})}\big\|_{2\rightarrow 2}\\
\leq & C(\sqrt[m]{\lambda}r)^{- k}
\end{align*}
for some constant $C>0$ independent of $\alpha$.

Hence, (\ref{ss1}) is proved.
\end{proof}

The most technical lemma in this subsection is the following amalgam type off-diagonal estimate.{
\begin{lemma}\label{pro2}
Let $p_{0}\leq p\leq 2$, then there exist constants $C, c_{0}>0$ such that for any ball
$B\subset X$ with radius $r_{B}$ and for any $\lambda>0$, $j\geq 3 $, $r= {\rm min}\{r_{B},\lambda^{-1/m}\}$,
\begin{align*}
\left(\sum_{\substack{\alpha\in \mathcal{I}_{r}\\
d(x_{\alpha},x_{B})\geq 2^{j}r_B}}\|\chi_{Q_{\alpha}(r)}F(L)\chi_{ B}f\|_{2}^{2}\right)^{\frac{1}{2}}
\leq C\mu(B)^{-\sigma_{p}}2^{-j\kappa_{0}}(\sqrt[m]{\lambda}r_{B})^{-\kappa_{0}+\frac{n}{2}}
(\sqrt[m]{\lambda}r)^{-c_{0}}\|\delta_{\lambda}F\|_{C^{\kappa_{0}+1}}\|f\|_{p}
\end{align*}
for all Borel functions $F$ such that supp$F\subseteq [-\lambda,\lambda]$.
\end{lemma}
\begin{proof}
Let $E(\tau):=F(-\lambda{\rm log}\tau)\tau^{-1}$ so that $F(L)=E(e^{-\frac{L}{\lambda}})e^{-\frac{L}{\lambda}}$. Then
\begin{align*}
&\left(\sum_{\substack{\alpha\in \mathcal{I}_{r}\\
d(x_{\alpha},x_{B})\geq 2^{j}r_B}}\|\chi_{Q_{\alpha}(r)}F(L)\chi_{ B}f\|_{2}^{2}\right)^{\frac{1}{2}}\\
\leq &\left(\sum_{\substack{\alpha\in \mathcal{I}_{r}\\
d(x_{\alpha},x_{B})\geq 2^{j}r_B}}\big\|\sum_{\substack{\gamma\in \mathcal{I}_{r}\\
\ d(x_{\gamma},x_{B})\leq 2^{j-1}r_B}}\chi_{Q_{\alpha}(r)}E(e^{-\frac{L}{\lambda}})
\chi_{Q_{\gamma}(r)}e^{-\frac{L}{\lambda}}\chi_{B}f\big\|_{2}^{2}\right)^{\frac{1}{2}}\\
+&\left(\sum_{\substack{\alpha\in \mathcal{I}_{r}\\
d(x_{\alpha},x_{B})\geq 2^{j}r_B}}\big\|\sum_{\substack{\gamma\in \mathcal{I}_{r}\\
\ d(x_{\gamma},x_{B})> 2^{j-1}r_B}}\chi_{Q_{\alpha}(r)}E(e^{-\frac{L}{\lambda}})\chi_{Q_{\gamma}(r)}e^{-\frac{L}{\lambda}}
\chi_{B}f\big\|_{2}^{2}\right)^{\frac{1}{2}}\\
=:& {\rm I} + {\rm II}.
\end{align*}

To estimate the first term ${\rm I}$, we note that if $x\in Q_{\alpha}(r)$, then
\begin{align*}
\frac{d(x,x_{\gamma})}{r}\geq \frac{d(x_{\alpha},x_{\gamma})}{r}-\frac{d(x,x_{\alpha})}{r}\geq
\frac{d(x_{\alpha},x_{\gamma})}{r}-1\geq \frac{1}{2}\frac{d(x_{\alpha},x_{\gamma})}{r}.
\end{align*}
Besides, since $d(x_{\alpha},x_{B})\geq 2^{j}r_B$ and $d(x_{\gamma},x_{B})\leq 2^{j-1}r_B$,
\begin{align*}
\frac{d(x_{\alpha},x_{\gamma})}{r}\geq \frac{d(x_\alpha,x_B)}{r}-\frac{d(x_\gamma,x_B)}{r}\geq \frac{2^{j-1}r_B}{r}.
\end{align*}
Hence,
\begin{align*}
{\rm I}&\leq \sum_{\substack{\gamma\in \mathcal{I}_{r}\\
\ d(x_{\gamma},x_{B})\leq 2^{j-1}r_B}}\left(\sum_{\substack{\alpha\in \mathcal{I}_{r}\\
d(x_{\alpha},x_{B})\geq 2^{j}r_B}}\big\|\chi_{Q_{\alpha}(r)}E(e^{-\frac{L}{\lambda}})
\chi_{Q_{\gamma}(r)}e^{-\frac{L}{\lambda}}\chi_{B}f\big\|_{2}^{2}\right)^{\frac{1}{2}}\\
&\leq \sum_{\gamma\in \mathcal{I}_{r}}\left(\sum_{\substack{\alpha\in \mathcal{I}_{r}\\
d(x_{\alpha},x_{\gamma})\geq 2^{j-1}r_B}}\big\|\chi_{Q_{\alpha}(r)}E(e^{-\frac{L}{\lambda}})
\chi_{Q_{\gamma}(r)}e^{-\frac{L}{\lambda}}\chi_{B}f\big\|_{2}^{2}\right)^{\frac{1}{2}}\\
&\leq C\sum_{\gamma\in \mathcal{I}_{r}}\left(\sum_{\substack{\alpha\in
\mathcal{I}_{r}\\ \ d(x_{\alpha},x_{\gamma})\geq 2^{j-1}r_{B}}}
\left(\frac{d(x_{\alpha},x_{\gamma})}{r}\right)^{-2\kappa_{0}}
\left\|\left(\frac{d(\cdot,x_{\gamma})}{r}\right)^{\kappa_{0}}\chi_{Q_{\alpha}(r)}E(e^{-\frac{L}{\lambda}})
\chi_{Q_{\gamma}(r)}e^{-\frac{L}{\lambda}}\chi_{B}f\right\|_{2}^{2}\right)^{\frac{1}{2}}\\
&\leq C\left(\frac{2^{j}r_B}{r}\right)^{-\kappa_0}\sum_{\gamma\in \mathcal{I}_{r}}\left(\sum_{\alpha\in
\mathcal{I}_{r}}
\left\|\left(\frac{d(\cdot,x_{\gamma})}{r}\right)^{\kappa_{0}}\chi_{Q_{\alpha}(r)}E(e^{-\frac{L}{\lambda}})
\chi_{Q_{\gamma}(r)}e^{-\frac{L}{\lambda}}\chi_{B}f\right\|_{2}^{2}\right)^{\frac{1}{2}}\\
&\leq C\left(\frac{2^{j}r_B}{r}\right)^{-\kappa_0}\sum_{\gamma\in \mathcal{I}_{r}}
\left\|\left(\frac{d(\cdot,x_{\gamma})}{r}\right)^{\kappa_{0}}E(e^{-\frac{L}{\lambda}})
\chi_{Q_{\gamma}(r)}e^{-\frac{L}{\lambda}}\chi_{B}f\right\|_{2}.
\end{align*}
Combining this estimate with the Fourier inversion formula
\begin{align}\label{fourier}
E(e^{-\frac{L}{\lambda}})=\int_{-\infty}^{+\infty}e^{i\xi e^{-\frac{L}{\lambda}}}\hat{E}(\xi)d\xi,
\end{align}
we obtain that
\begin{align*}
{\rm I} &\leq C\left(\frac{2^{j}r_B}{r}\right)^{-\kappa_0}\sum_{\gamma\in \mathcal{I}_{r}}
\left\|\int_{-\infty}^{+\infty}\left(\frac{d(\cdot,x_{\gamma})}{r}\right)^{\kappa_{0}}e^{i\xi e^{-\frac{L}{\lambda}}}
\chi_{Q_{\gamma}(r)}e^{-\frac{L}{\lambda}}\chi_{B}f \hat{E}(\xi)d\xi\right\|_{2}\\
&\leq C\left(\frac{2^{j}r_B}{r}\right)^{-\kappa_0}\sum_{\gamma\in \mathcal{I}_{r}}
\int_{-\infty}^{+\infty}\left\|\left(\frac{d(\cdot,x_{\gamma})}{r}\right)^{\kappa_{0}}e^{i\xi e^{-\frac{L}{\lambda}}}
\chi_{Q_{\gamma}(r)}e^{-\frac{L}{\lambda}}\chi_{B}f\right\|_{2} |\hat{E}(\xi)|d\xi.
\end{align*}
To continue, applying the following formula for commutators(see Lemma 3.1, \cite{Jensen}):
\begin{align*}
\bigg(\frac{d(\cdot,x_{\gamma})}{r}\bigg)^{\kappa_{0}}e^{i\xi e^{-\frac{L}{\lambda}}}
=\sum_{ k=0}^{\kappa_{0}}\Gamma(\kappa_{0}, k){\rm Ad}_{\gamma,r}^{ k}(e^{i\xi e^{-\frac{L}{\lambda}}})\bigg(\frac{d(\cdot,x_{\gamma})}{r}\bigg)^{\kappa_{0}- k},
\end{align*}
we have
\begin{align}\label{commutator01}
&\left\|\bigg(\frac{d(\cdot,x_{\gamma})}{r}\bigg)^{\kappa_{0}}e^{i\xi e^{-\frac{L}{\lambda}}}
\bigg(1+\frac{d(\cdot,x_{\gamma})}{r}\bigg)^{-\kappa_{0}}\right\|_{2\rightarrow 2}\nonumber\\
&\leq C\sum_{ k=0}^{\kappa_{0}}\left\|{\rm Ad}_{\gamma,r}^{ k}(e^{i\xi e^{-\frac{L}{\lambda}}})
\bigg(\frac{d(\cdot,x_{\gamma})}{r}\bigg)^{\kappa_{0}- k}\bigg(1+\frac{d(\cdot,x_{\gamma})}{r}\bigg)^{-\kappa_{0}}\right\|_{2\rightarrow 2}\nonumber\\
&\leq C\sum_{ k=0}^{\kappa_{0}}\big{\|}{\rm Ad}_{\gamma,r}^{ k}(e^{i\xi e^{-\frac{L}{\lambda}}})\big{\|}_{2\rightarrow 2}\nonumber\\
&\leq C(1+|\xi|)^{\kappa_{0}}(\sqrt[m]{\lambda}r)^{-\kappa_{0}},
\end{align}
where in the last inequality we applied Lemma \ref{ppp1}.
This, in combination with the Lemma \ref{heatbound} and the doubling condition (\ref{doubling}), yields
\begin{align}\label{estimate01}
{\rm I} &\leq C(2^{j}\sqrt[m]{\lambda}r_{B})^{-\kappa_{0}} \sum_{\gamma\in \mathcal{I}_{r}}
\left\|\bigg(1+\frac{d(\cdot,x_{\gamma})}{r}\bigg)^{\kappa_{0}}\chi_{Q_{\gamma}(r)}e^{-\frac{L}{\lambda}}\chi_{B}f
\right\|_{2}\int_{-\infty}^{+\infty}|\hat{E}(\xi)|(1+|\xi|)^{\kappa_{0}}d\xi\nonumber\\
&\leq C(2^{j}\sqrt[m]{\lambda}r_{B})^{-\kappa_{0}} \sum_{\gamma\in \mathcal{I}_{r}}
\left\|\chi_{Q_{\gamma}(r)}e^{-\frac{L}{\lambda}}\chi_{B}f
\right\|_{2}\|\delta_{\lambda}F\|_{C^{\kappa_{0}+1}}\nonumber\\
&\leq C(2^{j}\sqrt[m]{\lambda}r_{B})^{-\kappa_{0}} \sum_{\beta\in\mathcal{I}_{r}}\sum_{\gamma\in \mathcal{I}_{r}}
\left\|\chi_{Q_{\gamma}(r)}e^{-\frac{L}{\lambda}}\chi_{Q_{\beta}(r)\cap B}f
\right\|_{2}\|\delta_{\lambda}F\|_{C^{\kappa_{0}+1}},
\end{align}
where we used the fact that supp$\delta_{\lambda}F\subset [-1,1]$ implies that
\begin{align}\label{Cnorm}
\int_{-\infty}^{+\infty}|\hat{E}(\xi)|(1+|\xi|)^{\kappa_{0}}d\xi\leq C\|E\|_{H^{\kappa_{0}+1}}\leq C
 \|\delta_{\lambda}F\|_{H^{\kappa_{0}+1}}\leq C \|\delta_{\lambda}F\|_{C^{\kappa_{0}+1}}.
\end{align}
To continue, we use the condition $({\rm PEV}_{p_{0},2}^{\kappa,m})$ and the doubling condition \eqref{doubling} to see that
\begin{align}
&\left\|\chi_{Q_{\gamma}(r)}e^{-\frac{L}{\lambda}}\chi_{Q_{\beta}(r)\cap B}f
\right\|_{2}\nonumber\\
&\leq C(1+\sqrt[m]{\lambda}d(x_{\gamma},x_{\beta}))^{-n-\kappa}
\|\chi_{Q_{\beta}(r)\cap B}V_{\lambda^{-1/m}}^{-\sigma_{p}}f\|_{p}\nonumber\\
&\leq C\mu(B)^{-\sigma_{p}}(1+\sqrt[m]{\lambda}d(x_{\gamma},x_{\beta}))^{-n-\kappa}
(\sqrt[m]{\lambda}r_{B})^{n\sigma_{p}}(\sqrt[m]{\lambda}r)^{-n\sigma_{p}}\|\chi_{Q_{\beta}(r)\cap B}f\|_{p}.
\end{align}
Combining this with inequality \eqref{estimate01}, we conclude that
\begin{align*}
{\rm I}
&\leq C\mu(B)^{-\sigma_{p}}(\sqrt[m]{\lambda}r_{B})^{n\sigma_{p}}(\sqrt[m]{\lambda}r)^{-n\sigma_{p}}(2^{j}\sqrt[m]{\lambda}r_{B})^{-\kappa_{0}} \|\delta_{\lambda}F\|_{C^{\kappa_{0}+1}}\sum_{\beta\in\mathcal{I}_{r}}\sum_{\gamma\in \mathcal{I}_{r}}
(1+\sqrt[m]{\lambda}d(x_{\gamma},x_{\beta}))^{-n-\kappa}\|\chi_{Q_{\beta}(r)\cap B}f\|_{p}\\
&\leq C\mu(B)^{-\sigma_{p}}2^{-j\kappa_0}(\sqrt[m]{\lambda}r)^{-c}(\sqrt[m]{\lambda}r_{B})^{-\kappa_{0}+n\sigma_{p}} \|\delta_{\lambda}F\|_{C^{\kappa_{0}+1}}\sum_{\beta\in\mathcal{I}_{r}}
\|\chi_{Q_{\beta}(r)\cap B}f\|_{p}\\
&\leq C\mu(B)^{-\sigma_{p}}2^{-j\kappa_0}(\sqrt[m]{\lambda}r)^{-c}(\sqrt[m]{\lambda}r_{B})^{-\kappa_{0}+\frac{n}{2}} \|\delta_{\lambda}F\|_{C^{\kappa_{0}+1}}\|f\|_{p}
\end{align*}
for some $c>0$.

To estimate the term ${\rm II}$, we first note that
\begin{align*}
{\rm II}&=\left(\sum_{\substack{\alpha\in \mathcal{I}_{r}\\
d(x_{\alpha},x_{B})\geq 2^{j}r_B}}\Big\|\sum_{\substack{\gamma\in \mathcal{I}_{r}\\
\ d(x_{\gamma},x_{B})> 2^{j-1}r_B}}\chi_{Q_{\alpha}(r)}E(e^{-\frac{L}{\lambda}})\chi_{Q_{\gamma}(r)}e^{-\frac{L}{\lambda}}
\chi_{B}f\Big\|_{2}^{2}\right)^{\frac{1}{2}}\\
&\leq\Big\|\sum_{\substack{\gamma\in \mathcal{I}_{r}\\
\ d(x_{\gamma},x_{B})> 2^{j-1}r_B}}E(e^{-\frac{L}{\lambda}})\chi_{Q_{\gamma}(r)}e^{-\frac{L}{\lambda}}
\chi_{B}f\Big\|_{2}.
\end{align*}
This, in combination with (\ref{fourier}) and the spectral theorem, yields
\begin{align}\label{IIestimateA}
{\rm II} &=\left\|\int_{-\infty}^{+\infty}e^{i\xi e^{-\frac{L}{\lambda}}}\sum_{\substack{\gamma\in \mathcal{I}_{r}\\
\ d(x_{\gamma},x_{B})> 2^{j-1}r_B}}\chi_{Q_{\gamma}(r)}e^{-\frac{L}{\lambda}}
\chi_{B}f\hat{E}(\xi)d\xi\right\|_2\nonumber\\
&\leq\Bigg\|\sum_{\substack{\gamma\in \mathcal{I}_{r}\\
\ d(x_{\gamma},x_{B})> 2^{j-1}r_B}}\chi_{Q_{\gamma}(r)}e^{-\frac{L}{\lambda}}
\chi_{B}f\Bigg\|_2\int_{-\infty}^{+\infty}|\hat{E}(\xi)|d\xi\nonumber\\
&\leq\Bigg\|\sum_{\substack{\gamma\in \mathcal{I}_{r}\\
\ d(x_{\gamma},x_{B})> 2^{j-1}r_B}}\chi_{Q_{\gamma}(r)}e^{-\frac{L}{\lambda}}
\chi_{B}f\Bigg\|_2\|\delta_\lambda F\|_{C^{\kappa_{0}+1}}\nonumber\\
&\leq\Bigg\|e^{-\frac{L}{\lambda}}
\chi_{B}f\Bigg\|_{L^2(X\backslash B(x_B,2^{j-2}r_B))}\|\delta_\lambda F\|_{C^{\kappa_{0}+1}}\nonumber\\
&\leq \left(\sum_{k=j-1}^{\infty}\Bigg\|\chi_{U_k(B)}e^{-\frac{L}{\lambda}}
\chi_{B}f\Bigg\|_{2}^2\right)^{1/2}\|\delta_\lambda F\|_{C^{\kappa_{0}+1}}.
\end{align}
It follows from Lemma \ref{criterion} that for any $k\geq 3$,
\begin{align}\label{sssi}
\Bigg\|\chi_{U_k(B)}e^{-\frac{L}{\lambda}}
V_{\lambda^{-1/m}}^{\sigma_p}\chi_{B}\Bigg\|_{p\rightarrow 2}
&\leq \sup\limits_{\alpha\in\mathcal{I}_{r}}\sum_{\beta\in\mathcal{I}_{r}}\Bigg\|\chi_{Q_{\alpha}(r)\cap U_k(B)}e^{-\frac{L}{\lambda}}
V_{\lambda^{-1/m}}^{\sigma_p}\chi_{Q_{\beta}(r)\cap B}\Bigg\|_{p\rightarrow2}\nonumber\\
&+ \sup\limits_{\beta\in\mathcal{I}_{r}}\sum_{\alpha\in\mathcal{I}_{r}}\Bigg\|\chi_{Q_{\alpha}(r)\cap U_k(B)}e^{-\frac{L}{\lambda}}
V_{\lambda^{-1/m}}^{\sigma_p}\chi_{Q_{\beta}(r)\cap B}\Bigg\|_{p\rightarrow2}\nonumber\\
&\leq C\sup\limits_{\substack{\alpha\in\mathcal{I}_{r}\\Q_{\alpha}(r)\cap U_{k}(B)\neq \emptyset}}\sum_{\substack{\beta\in\mathcal{I}_{r}\\Q_{\beta}(r)\cap B\neq\emptyset}}(1+\sqrt[m]{\lambda}d(x_\alpha,x_\beta))^{-n-\kappa}\nonumber\\
&+ C\sup\limits_{\substack{\beta\in\mathcal{I}_{r}\\Q_{\beta}(r)\cap B\neq\emptyset}}\sum_{\substack{\alpha\in\mathcal{I}_{r}\\Q_{\alpha}(r)\cap U_{k}(B)\neq \emptyset}}(1+\sqrt[m]{\lambda}d(x_\alpha,x_\beta))^{-n-\kappa}\nonumber\\
&\leq C(1+2^{k}\sqrt[m]{\lambda}r_B)^{-\kappa_0}(\sqrt[m]{\lambda}r)^{-n}.
\end{align}
Therefore,
\begin{align*}
{\rm II}&\leq C\mu(B)^{-\sigma_{p}}(\sqrt[m]{\lambda}r_{B})^{n\sigma_{p}}(\sqrt[m]{\lambda}r)^{-c_1}\left(\sum_{k=j-1}^{\infty}(1+2^{k}\sqrt[m]{\lambda}r_B)^{-2\kappa_0}\right)^{1/2}\|\delta_\lambda F\|_{C^{\kappa_{0}+1}}\|f\|_{p}\\
&\leq C\mu(B)^{-\sigma_{p}}2^{-j\kappa_0}(\sqrt[m]{\lambda}r)^{-c_2}(\sqrt[m]{\lambda}r_{B})^{-\kappa_{0}+\frac{n}{2}} \|\delta_{\lambda}F\|_{C^{\kappa_{0}+1}}\|f\|_{p}
\end{align*}
for some $c_1,c_2>0$. This finishes the proof of Lemma \ref{pro2}.
\end{proof}

\medskip

\begin{proof}[Proof of Proposition \ref{p2estimate}]
We embed the set $(2^{j-1}B)^{c}$ into a countable union of amalgam block $\{Q_{\alpha}(r)\}$, that is,
\begin{align*}
(2^{j-1}B)^{c}\subseteq \bigcup_{\substack{\alpha\in \mathcal{I}_{r}\\ \ d(x_{\alpha},x_{B})\geq 2^{j-2}r_B}}Q_{\alpha}(r).
\end{align*}
  Hence, it follows from Lemma \ref{pro2} that
\begin{align*}
\|\chi_{U_{j}(B)}F(L)\chi_{B}f\|_{2}
\leq& \left(\sum_{\substack{\alpha\in \mathcal{I}_{r}\\
d(x_{\alpha},x_{B})\geq 2^{j-2}r_B}}\|\chi_{Q_{\alpha}(r)}F(L)\chi_{ B}f\|_{2}^{2}\right)^{\frac{1}{2}}\\
\leq & C2^{-j\kappa_{0}}\mu(B)^{-\sigma_{p}}(\sqrt[m]{\lambda}r_{B})^{-\kappa_{0}+\frac{n}{2}}
(\sqrt[m]{\lambda}r)^{-c_{0}}\|\delta_{\lambda}F\|_{C^{\kappa_{0}+1}} \|f\|_{p}.
\end{align*}
This implies the estimate (\ref{p2es}).
\end{proof}
}

\subsection{Off-diagonal estimates for oscillatory compactly supported spectral multipliers}
In the previous subsection, we obtain off-diagonal estimates for compactly supported spectral multipliers
with sufficient smoothness, but this estimate is not suitable for those multiplier function with oscillatory
term. Inspired by \cite{Dancola}, to overcome this difficulty, we will use commutators techniques again to
obtain much more subtle estimates for oscillatory compactly supported spectral multipliers. That is, we will the following
Proposition \ref{p2estimate2}, which is a general version of Proposition \ref{p2estimate22}.
\begin{proposition}\label{p2estimate2}
Let $p_{0}\leq p\leq 2$, then there exist constants $C, c_{0}>0$ such that for any ball $B\subset X$ with radius $r_{B}$ and for any $\lambda>0$, $j\geq6 $,
\begin{align*}
\big\|\chi_{U_{j}(B)}e^{itL}F(L)\chi_{B}f\big\|_{2}\leq C2^{-j\kappa_{0}}\mu(B)^{-\sigma_{p}}
(\sqrt[m]{\lambda}r_{B})^{-\kappa_{0}+\frac{n}{2}}(1+\lambda|t|)^{\kappa_{0}}(\sqrt[m]{\lambda}r)^{-c_{0}}\|\delta_{\lambda}F\|_{C^{\kappa_{0}+1}} \|f\|_{p}
\end{align*}
for all Borel functions $F$ such that supp$F\subseteq [-\lambda,\lambda]$, where $r= {\rm min}\{r_{B},\lambda^{-1/m}\}$.
\end{proposition}

To begin with, for every $\lambda>0$, we denote
$$R_{\lambda}:=\Big(I+\frac{L}{\lambda}\Big)^{-1}.$$
The key observation is the following lemma.
\begin{lemma}\label{adresolvent1}
For every $1\leq  k\leq\kappa_{0}$, $r, \lambda>0$ and $\ell\in\mathcal{I}_{r}$, the operator ${\rm Ad}_{\ell,r}(R_{\lambda}^{2^{ k+1}-2}e^{itL})$
 is given by a finite combination of operators of the following type:
$$R_{\lambda}^{\mu_{1}}R_{\lambda}^{2^{ k}-2}e^{itL}{\rm Ad}_{\ell,r}(R_{\lambda}^{\mu_{2}})R_{\lambda}^{\mu_{3}},\
 \ \mu_{1}, \mu_{2}, \mu_{3}\in\mathbb{N},$$
$$R_{\lambda}^{\mu_{1}}{\rm Ad}_{\ell,r}(R_{\lambda}^{\mu_{2}})R_{\lambda}^{2^{ k}-2}e^{itL}R_{\lambda}^{\mu_{3}},\
 \ \mu_{1}, \mu_{2}, \mu_{3}\in\mathbb{N},$$
$$\lambda t\int_{0}^{1}R_{\lambda}^{2^{ k}-2}e^{i\rho tL}{\rm Ad}_{\ell,r}(R_{\lambda})R_{\lambda}^{2^{ k}-2}e^{i(1-\rho)tL}d\rho.$$
\end{lemma}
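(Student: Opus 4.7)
The plan is to reduce the claim to three algebraic manipulations: functional-calculus commutativity of $R_\lambda$ and $e^{itL}$, the Leibniz rule for ${\rm Ad}_{\ell,r}$, and a ``resolvent-sandwich'' identity that converts ${\rm Ad}_{\ell,r}(L)$ into $-\lambda\,{\rm Ad}_{\ell,r}(R_\lambda)$ when flanked on both sides by $R_\lambda$. Since $R_\lambda$ and $e^{itL}$ are both Borel functions of the self-adjoint $L$, they commute, so I will first rewrite
\[
R_\lambda^{2^{k+1}-2}\, e^{itL} \;=\; R_\lambda^{2^{k}-1}\, e^{itL}\, R_\lambda^{2^{k}-1}.
\]
Applying the derivation property of ${\rm Ad}_{\ell,r}$ then yields three pieces:
\[
{\rm Ad}_{\ell,r}(R_\lambda^{2^k-1})\, e^{itL}\, R_\lambda^{2^k-1} \;+\; R_\lambda^{2^k-1}\, {\rm Ad}_{\ell,r}(e^{itL})\, R_\lambda^{2^k-1} \;+\; R_\lambda^{2^k-1}\, e^{itL}\, {\rm Ad}_{\ell,r}(R_\lambda^{2^k-1}).
\]

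For the first and third pieces, I would telescope
\[
{\rm Ad}_{\ell,r}(R_\lambda^{2^k-1}) \;=\; \sum_{j=0}^{2^k-2} R_\lambda^{j}\, {\rm Ad}_{\ell,r}(R_\lambda)\, R_\lambda^{2^k-2-j}
\]
and migrate the resolvent powers past $e^{itL}$ by commutativity. Each summand in the first piece then takes the form $R_\lambda^{j}\, {\rm Ad}_{\ell,r}(R_\lambda)\, R_\lambda^{2^k-2}\, e^{itL}\, R_\lambda^{2^k-1-j}$, which matches type 2 with $\mu_{2}=1$; the third piece becomes symmetrically a sum of type 1 operators. This step is pure bookkeeping.

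The heart of the argument is the middle piece. I would pull out $R_\lambda^{2^k-2}$ on each side and reduce matters to establishing
\[
R_\lambda\, {\rm Ad}_{\ell,r}(e^{itL})\, R_\lambda \;=\; -i\lambda t \int_{0}^{1} e^{i s t L}\, {\rm Ad}_{\ell,r}(R_\lambda)\, e^{i(1-s)tL}\, ds.
\]
To derive this, set $\eta = d(x_\ell,\cdot)/r$, differentiate the map $s \mapsto R_\lambda\, e^{i s t L}\, \eta\, e^{i(1-s)tL}\, R_\lambda$, and integrate from $0$ to $1$; pushing one $R_\lambda$ through each $e^{i s t L}$ leaves an integrand of the form $e^{i s t L}\, R_\lambda [\eta, L] R_\lambda\, e^{i(1-s)tL}$. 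The resolvent-sandwich identity $R_\lambda [\eta,L] R_\lambda = -\lambda\, {\rm Ad}_{\ell,r}(R_\lambda)$ follows at once from $R_\lambda^{-1}\eta - \eta R_\lambda^{-1} = \lambda^{-1}[L,\eta]$ by multiplying by $R_\lambda$ on both sides. Substituting and distributing the outer $R_\lambda^{2^k-2}$ back into the integral produces the desired type 3 term.

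The main obstacle will be analytic rather than algebraic: ${\rm Ad}_{\ell,r}(L)$ need not be bounded, so the differentiation above must be justified on a dense domain (say $\mathcal{D}(L)$, on which $\eta\, \mathcal{D}(L)\subseteq \mathcal{D}(L)$ since $\eta$ is Lipschitz and locally bounded) before one extends by continuity. This is exactly why at least one $R_\lambda$ must sit on each side of the bare ${\rm Ad}_{\ell,r}(e^{itL})$, which in turn forces the particular exponent $2^{k+1}-2$: after the sandwich absorbs one $R_\lambda$ on each side, precisely $2^{k}-2$ resolvents remain on either side of ${\rm Ad}_{\ell,r}(R_\lambda)$, matching the exponent in type 3. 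Once this technicality is handled via Lemma \ref{du} applied to the sandwiched operator, the remainder is a routine combination of the Leibniz expansions above.
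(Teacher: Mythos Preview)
Your proof is correct and rests on the same two core identities the paper uses: Lemma~\ref{du} for $[\eta,e^{itL}]$ and the resolvent-sandwich $R_\lambda[\eta,L]R_\lambda=-\lambda\,{\rm Ad}_{\ell,r}(R_\lambda)$. The difference is structural: the paper argues by induction on $k$, writing $R_\lambda^{2^{k+1}-2}e^{itL}=R_\lambda^{2^{k-1}}\bigl(R_\lambda^{2^{k}-2}e^{itL}\bigr)R_\lambda^{2^{k-1}}$ and invoking the level-$(k-1)$ result on the middle factor, with the base case $k=1$ being exactly your argument specialised to $R_\lambda e^{itL}R_\lambda$. You instead split symmetrically as $R_\lambda^{2^{k}-1}e^{itL}R_\lambda^{2^{k}-1}$ once and for all, telescope ${\rm Ad}_{\ell,r}(R_\lambda^{2^{k}-1})$ explicitly, and handle the middle piece directly. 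Your route is slightly more elementary---it avoids the induction entirely---at the cost of producing explicit sums of type-1 and type-2 terms (with $\mu_2=1$ always) rather than the paper's more compact formulation that allows arbitrary $\mu_2$. The paper's inductive framing dovetails naturally with the subsequent Lemma~\ref{commutatorestimate}, which is itself proved by induction on the same index, but for the present lemma your direct argument is a clean simplification.
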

\begin{proof}
We deduce this result by induction on $ k$. First of all, we point out that it is true for $ k=1$. Indeed, by the commutator formula and  Lemma \ref{du},
\begin{align*}
{\rm Ad}_{\ell,r}(R_{\lambda}e^{itL}R_{\lambda})&={\rm Ad}_{\ell,r}(R_{\lambda})e^{itL}R_{\lambda}
+R_{\lambda}e^{itL}{\rm Ad}_{\ell,r}(R_{\lambda})+R_{\lambda}{\rm Ad}_{\ell,r}(e^{itL})R_{\lambda}\\
&={\rm Ad}_{\ell,r}(R_{\lambda})e^{itL}R_{\lambda}+R_{\lambda}e^{itL}{\rm Ad}_{\ell,r}(R_{\lambda})
+i\lambda t\int_{0}^{1}e^{i\rho tL}R_{\lambda}{\rm Ad}_{\ell,r}(L/\lambda)R_{\lambda}e^{i(1-\rho)tL}d\rho.
\end{align*}
Observe that
\begin{align*}
R_{\lambda}{\rm Ad}_{\ell,r}(L/\lambda)R_{\lambda}=R_{\lambda}\frac{d(x_{\ell},\cdot)}{r}(R_{\lambda}^{-1}-I)
R_{\lambda}-R_{\lambda}(R_{\lambda}^{-1}-I)\frac{d(x_{\ell},\cdot)}{r}R_{\lambda}=-{\rm Ad}_{\ell,r}(R_{\lambda}).
\end{align*}
Hence, the result for $ k=1$ is verified.

Now, let us assume this lemma holds for $ k-1$ and compute
\begin{align*}
{\rm Ad}_{\ell,r}(R_{\lambda}^{2^{ k+1}-2}e^{itL})
&={\rm Ad}_{\ell,r}(R_{\lambda}^{2^{ k-1}}(R_{\lambda}^{2^{ k}-2}e^{itL})R_{\lambda}^{2^{ k-1}})\\
&={\rm Ad}_{\ell,r}(R_{\lambda}^{2^{ k-1}})R_{\lambda}^{2^{ k}-2}e^{itL}R_{\lambda}^{2^{ k-1}}+
R_{\lambda}^{2^{ k-1}}{\rm Ad}_{\ell,r}(R_{\lambda}^{2^{ k}-2}e^{itL})R_{\lambda}^{2^{ k-1}}+
R_{\lambda}^{2^{ k-1}}R_{\lambda}^{2^{ k}-2}e^{itL}{\rm Ad}_{\ell,r}(R_{\lambda}^{2^{ k-1}}).
\end{align*}
The first and the last term are of the desired form. The second one is also of the desired form by the inductive hypothesis, since
\begin{align*}
R_{\lambda}^{2^{ k-1}}R_{\lambda}^{2^{ k-1}-2}=R_{\lambda}^{2^{ k}-2}.
\end{align*}
This finishes the proof of Lemma \ref{adresolvent1}.
\end{proof}

\begin{lemma}\label{adresolvent2}
There exists a constant $C>0$, such that for any $\lambda,r>0$, $\ell\in\mathcal{I}_r$ and $0\leq k\leq \kappa_{0}$,
\begin{align*}
\|{\rm Ad}_{\ell,r}^{ k}(R_{\lambda})f\|_{2}\leq C(\sqrt[m]{\lambda}r)^{- k}\|f\|_{2}.
\end{align*}
\end{lemma}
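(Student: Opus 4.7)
The plan is to deduce the bound from a Laplace-transform representation of the resolvent combined with the amalgam-block commutator technique already developed for $e^{-L/\lambda}$ in Lemma~\ref{ppp1}. By the spectral theorem applied to the scalar identity $\frac{1}{1+\xi/\lambda} = \int_0^\infty e^{-s}e^{-s\xi/\lambda}\,ds$, one has
\begin{equation*}
R_{\lambda} = \int_0^\infty e^{-s}\, e^{-sL/\lambda}\, ds,
\end{equation*}
and pulling ${\rm Ad}_{\ell,r}^{k}$ inside the integral (justified on a dense subspace of compactly supported $f$ on which the multiplication by $d(x_\ell,\cdot)/r$ interacts well with the semigroup, then extended by density) yields ${\rm Ad}_{\ell,r}^{k}(R_{\lambda}) = \int_0^\infty e^{-s}\,{\rm Ad}_{\ell,r}^{k}(e^{-sL/\lambda})\, ds$. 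The case $k=0$ is the trivial $\|R_\lambda\|_{2\to 2}\le 1$, so I focus on $1\le k\le \kappa_0$.

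The core step is the single-semigroup estimate
\begin{equation*}
\bigl\|{\rm Ad}_{\ell,r}^{k}(e^{-sL/\lambda})\bigr\|_{2\to 2} \le C\bigl((s/\lambda)^{1/m}/r\bigr)^{k}
\end{equation*}
valid for \emph{all} $r,s,\lambda>0$ and $0\le k\le \kappa_0$. This is the natural analog of the inequality \eqref{comheat} from the proof of Lemma~\ref{ppp1}, with the effective heat scale $(s/\lambda)^{1/m}$ in place of $\lambda^{-1/m}$. Partitioning $X$ into amalgam blocks $\{Q_\beta((s/\lambda)^{1/m})\}_\beta$ and expanding $\chi_{Q_\beta}{\rm Ad}_{\ell,r}^{k}(e^{-sL/\lambda})\chi_{Q_\alpha}$ by the trinomial identity exactly as in the proof of \eqref{ss1}, the two ``local'' distance factors contribute a product bounded by $((s/\lambda)^{1/m}/r)^{\gamma_2+\gamma_3}$, while the ``long-range'' factor $(d(x_\beta,x_\alpha)/r)^{\gamma_1}$ combines with the polynomial off-diagonal estimate $({\rm PEV}_{2,2}^{\kappa,m})$ supplied by \eqref{inter} and is summed by dyadic-annular decomposition (as in the derivation of \eqref{estimate02}) to give a contribution bounded by $((s/\lambda)^{1/m}/r)^{\gamma_1}$; the convergence of this dyadic sum needs only $\gamma_1<\kappa$, which is guaranteed by $\gamma_1\le k\le \kappa_0<\kappa$. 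Crucially, no comparison between $r$ and the block size $(s/\lambda)^{1/m}$ is used, so the bound holds uniformly in these parameters.

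Integrating the pointwise-in-$s$ bound against $e^{-s}\,ds$ and recognising a Gamma function,
\begin{equation*}
\bigl\|{\rm Ad}_{\ell,r}^{k}(R_{\lambda})\bigr\|_{2\to 2} \le \frac{C}{r^{k}\lambda^{k/m}}\int_0^\infty e^{-s}\, s^{k/m}\,ds \;=\; \frac{C\,\Gamma(k/m+1)}{(\sqrt[m]{\lambda}\,r)^{k}},
\end{equation*}
which is the desired conclusion. The main obstacle is the uniform-in-$r$ single-semigroup commutator estimate: one must verify that the proof of \eqref{ss1} never invokes the auxiliary hypothesis $r\le\lambda^{-1/m}$ stated in Lemma~\ref{ppp1} (which is only exploited afterwards when the upper bound is normalised against $1$). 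Once this is confirmed, the remaining steps — Fubini-style interchange of ${\rm Ad}_{\ell,r}^{k}$ with the Laplace integral, and the elementary Gamma evaluation — are routine.
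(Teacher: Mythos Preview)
Your proposal is correct and follows essentially the same route as the paper: both use the Laplace representation $R_\lambda=\int_0^\infty e^{-\tau}e^{-\tau L/\lambda}\,d\tau$, pass the commutator inside, and invoke the semigroup commutator bound \eqref{comheat} to produce the factor $\tau^{k/m}$ integrated against $e^{-\tau}$. The only cosmetic difference is that the paper exploits the homogeneity ${\rm Ad}_{\ell,r}^{k}=(r'/r)^{k}{\rm Ad}_{\ell,r'}^{k}$ (written via the integral kernel) to rescale to $r'=(\tau/\lambda)^{1/m}$, so that \eqref{comheat} is applied exactly at the boundary scale where its stated hypothesis $r'\le(\lambda/\tau)^{-1/m}$ holds with equality and yields the constant bound directly; you instead keep $r$ fixed and rely on your (correct) observation that the derivation of \eqref{comheat} never actually uses the restriction $r\le\lambda^{-1/m}$.
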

\begin{proof}
Denote $K_{T}(x,y)$ be the distribution kernel of the operator $T$. Then by induction on $ k\in [0,\kappa_{0}]$,
we have
\begin{align*}
{\rm Ad}_{\ell,r}^{ k}(R_{\lambda})f(x)
&=\int_{X}\left(\frac{d(x_{\ell},x)}{r}-\frac{d(x_{\ell},y)}{r}\right)^{ k}K_{R_{\lambda}}(x,y)f(y)d\mu(y)\\
&=(\sqrt[m]{\lambda}r)^{- k}\int_{X}\left(\sqrt[m]{\lambda}d(x_{\ell},x)-\sqrt[m]{\lambda}d(x_{\ell},y)\right)^{ k}K_{R_{\lambda}}(x,y)f(y)d\mu(y).
\end{align*}
Applying the following representation formula
\begin{align*}
R_{\lambda}=\int_{0}^{\infty}e^{-\tau L/\lambda}e^{-\tau}d\tau,
\end{align*}
we see that
\begin{align*}
{\rm Ad}_{\ell,r}^{ k}(R_{\lambda})f(x)
&=(\sqrt[m]{\lambda}r)^{- k}\int_{0}^{\infty}\int_{X}\left(\sqrt[m]{\lambda}d(x_{\ell},x)-
\sqrt[m]{\lambda}d(x_{\ell},y)\right)^{ k}p_{\tau/\lambda}(x,y)f(y)d\mu(y)e^{-\tau}d\tau\\
&=(\sqrt[m]{\lambda}r)^{- k}\int_{0}^{\infty}{\rm Ad}_{\ell,(\tau/\lambda)^{1/m}}^{ k}
(e^{-\frac{\tau }{\lambda}L})f(x)\tau^{\frac{ k}{m}}e^{-\tau}d\tau.
\end{align*}

It follows from (\ref{comheat}) with $r$ replaced by $(\tau/\lambda)^{1/m}$ and $\lambda$ replaced by $\lambda/\tau$ that
\begin{align*}
\|{\rm Ad}_{\ell,(\tau/\lambda)^{1/m}}^{ k}(e^{-\frac{\tau }{\lambda}L})f\|_{2}\leq C\|f\|_{2},
\end{align*}
which means that
\begin{align*}
\|{\rm Ad}_{\ell,r}^{ k}(R_{\lambda})f\|_{2}\leq (\sqrt[m]{\lambda}r)^{- k}\int_{0}^{\infty}
\|{\rm Ad}_{\ell,(\tau/\lambda)^{1/m}}^{ k}(e^{-\frac{\tau }{\lambda}L})f(x)\|_{2}\tau^{\frac{ k}{m}}e^{-\tau}d\tau
\leq C(\sqrt[m]{\lambda}r)^{- k}\|f\|_{2}.
\end{align*}
This ends the proof of Lemma \ref{adresolvent2}.
\end{proof}
Now we apply Lemmas \ref{adresolvent1} and \ref{adresolvent2} to obtain the following crucial commutator estimate for Schr\"{o}dinger group.
\begin{lemma}\label{commutatorestimate}
There exist  constants $C,c>0$, such that for any $\lambda>0$, $r\leq \lambda^{-1/m}$, $\ell\in\mathcal{I}_r$ and $0\leq k\leq j\leq\kappa_{0}$,
\begin{align*}
\|{\rm Ad}_{\ell,r}^{ k}(R_{\lambda}^{2^{j+1}-2}e^{itL})\|_{2\rightarrow 2}\leq C(1+\lambda|t|)^{ k}(\sqrt[m]{\lambda}r)^{-c}.
\end{align*}
\end{lemma}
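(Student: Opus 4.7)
I proceed by induction on $k$, with $j\geq k$ arbitrary. The base case $k=0$ is immediate: $R_{\lambda}$ is a contraction on $L^{2}(X)$ and $e^{itL}$ is unitary, so $\|R_{\lambda}^{2^{j+1}-2}e^{itL}\|_{2\rightarrow 2}\leq 1$. For the inductive step I first treat the diagonal case $j=k$, and then extend to $j>k$ by factoring out the excess resolvent. Since $R_{\lambda}$ commutes with $e^{itL}$, for $j>k$ I write $R_{\lambda}^{2^{j+1}-2}e^{itL}=R_{\lambda}^{2^{j+1}-2^{k+1}}\cdot R_{\lambda}^{2^{k+1}-2}e^{itL}$ and apply the Leibniz rule
\begin{equation*}
{\rm Ad}_{\ell,r}^{k}\bigl(R_{\lambda}^{2^{j+1}-2}e^{itL}\bigr)=\sum_{a+b=k}\binom{k}{a}{\rm Ad}_{\ell,r}^{a}\bigl(R_{\lambda}^{2^{j+1}-2^{k+1}}\bigr)\,{\rm Ad}_{\ell,r}^{b}\bigl(R_{\lambda}^{2^{k+1}-2}e^{itL}\bigr);
\end{equation*}
the first factor is $O((\sqrt[m]{\lambda}r)^{-a})$ by an iterated Leibniz expansion of $R_{\lambda}^{\mu}=R_{\lambda}\cdots R_{\lambda}$ combined with Lemma \ref{adresolvent2}, and the second is controlled either by the diagonal case (when $b=k$) or by the inductive hypothesis at level $b<k$.

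\textbf{The diagonal case $j=k$.} I write ${\rm Ad}_{\ell,r}^{k}(R_{\lambda}^{2^{k+1}-2}e^{itL})={\rm Ad}_{\ell,r}^{k-1}\bigl({\rm Ad}_{\ell,r}(R_{\lambda}^{2^{k+1}-2}e^{itL})\bigr)$ and invoke Lemma \ref{adresolvent1} to rewrite the inner commutator as a finite combination of the three listed operator types, each containing one or more blocks of the form $R_{\lambda}^{2^{k}-2}e^{i\rho tL}=R_{\lambda}^{2^{(k-1)+1}-2}e^{i\rho tL}$ for some $\rho\in[0,1]$. I then apply ${\rm Ad}_{\ell,r}^{k-1}$ and distribute through each product by iterated Leibniz. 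Every resulting summand factors into commutators of two elementary kinds: ${\rm Ad}_{\ell,r}^{i}(R_{\lambda}^{\mu})$, bounded by $C(\sqrt[m]{\lambda}r)^{-i}$ via Lemma \ref{adresolvent2}; and ${\rm Ad}_{\ell,r}^{i}(R_{\lambda}^{2^{k}-2}e^{i\rho tL})$ with $i\leq k-1$, bounded by $C(1+\lambda|t|)^{i}(\sqrt[m]{\lambda}r)^{-i}$ by the inductive hypothesis at level $k-1$ with $j=k-1$ (using $|\rho|\leq 1$).

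\textbf{Bookkeeping and main obstacle.} For Types 1 and 2 from Lemma \ref{adresolvent1}, the four multinomial indices $a+b+c+d=k-1$ together with the single pre-existing commutator on $R_{\lambda}^{\mu_{2}}$ give total exponent $-(a+b+c+1+d)=-k$ for $\sqrt[m]{\lambda}r$ and exponent $b\leq k-1\leq k$ for $1+\lambda|t|$, as claimed. For Type 3, the three multinomial indices $a+b+c=k-1$ again yield exponent $-k$ for $\sqrt[m]{\lambda}r$ once the inner ${\rm Ad}_{\ell,r}(R_{\lambda})$ is counted, while the prefactor $\lambda t$ combines with the factors $(1+\lambda|t|)^{a}(1+\lambda|t|)^{c}$ through $|\lambda t|\leq 1+\lambda|t|$ to produce at most $(1+\lambda|t|)^{a+c+1}\leq (1+\lambda|t|)^{k}$. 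The delicate point, and what I expect to be the main obstacle, is precisely this Type 3 accounting: the extra $\lambda t$ generated when Lemma \ref{du} is applied to the commutator of $e^{itL}$ has to be absorbed without exceeding the allowed power of $(1+\lambda|t|)$, and this succeeds only because the identity $2^{k+1}-2=2(2^{k}-2)+2$ furnishes exactly the right number of resolvent factors to sandwich both $e^{i\rho tL}$ and $e^{i(1-\rho)tL}$ by the full protection $R_{\lambda}^{2^{k}-2}$ required to invoke the inductive hypothesis on each block with commutator count at most $k-1$.
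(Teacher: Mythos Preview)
Your proof is correct and relies on the same ingredients as the paper's argument---Lemma~\ref{adresolvent1} to peel off one commutator, the multinomial Leibniz rule to distribute the remaining $k-1$, Lemma~\ref{adresolvent2} for the resolvent pieces, and the inductive hypothesis for the blocks $R_\lambda^{2^{k}-2}e^{i\rho tL}$. The only difference is organizational: the paper inducts on $j$ (proving the bound for all $k\le j$ at once by applying Lemma~\ref{adresolvent1} directly at level $j$), which avoids your separate ``extension to $j>k$'' step via the factorization $R_\lambda^{2^{j+1}-2}e^{itL}=R_\lambda^{2^{j+1}-2^{k+1}}R_\lambda^{2^{k+1}-2}e^{itL}$; both routes arrive at the same place.
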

\begin{proof}
The result will be shown by induction on $j=0,\cdots,\kappa_{0}$.

By the spectral theorem, the result is true for $j=0$. Now we assume it holds for $j-1$, and write, for $ k\leq j$,
\begin{align*}
{\rm Ad}_{\ell,r}^{ k}(R_{\lambda}^{2^{j+1}-2}e^{itL})={\rm Ad}_{\ell,r}^{ k-1}({\rm Ad}_{\ell,r}(R_{\lambda}^{2^{j+1}-2}e^{itL})).
\end{align*}
By Lemma \ref{adresolvent1} and the formula
\begin{align*}
{\rm Ad}_{\ell,r}^{ k-1}(T_{1}\cdots T_{n})=\sum_{\alpha_{1}+\cdots+\alpha_{n}= k-1}
\frac{( k-1)!}{\alpha_{1}!\cdots \alpha_{n}!}{\rm Ad}_{\ell,r}^{\alpha_{1}}(T_{1})\cdots {\rm Ad}_{\ell,r}^{\alpha_{n}}(T_{n})
\end{align*}
as well as the inductive hypothesis and Lemma \ref{adresolvent2}, we obtain the inequality.
\end{proof}
Now, the result in the previous subsection can be extended to oscillatory compactly supported spectral multipliers.
{
\begin{lemma}\label{oscillatory2}
Let $p_{0}\leq p\leq 2$, then there exist constants $C,c_{0}>0$, such that for any ball $B\subset X$
 with radius $r_{B}$ and for any $\lambda>0$, $j\geq 3$, $r= {\rm min}\{r_{B},\lambda^{-1/m}\}$,
\begin{align*}
&\left(\sum_{\substack{\alpha\in \mathcal{I}_{r}\\
d(x_{\alpha},x_{B})\geq 2^{j}r_B}}\|\chi_{Q_{\alpha}(r)}e^{itL}F(L)\chi_{ B}f\|_{2}^{2}\right)^{\frac{1}{2}}\\
\leq  & C\mu(B)^{-\sigma_{p}}2^{-j\kappa_{0}}(\sqrt[m]{\lambda}r_{B})^{-\kappa_{0}+\frac{n}{2}}(1+\lambda|t|)^{\kappa_{0}}
(\sqrt[m]{\lambda}r)^{-c_{0}}\|\delta_{\lambda}F\|_{C^{\kappa_{0}+1}} \|f\|_{p}
\end{align*}
for all Borel functions $F$ such that supp$F\subseteq [-\lambda,\lambda]$.
\end{lemma}
\begin{proof}
To begin with, we note that
\begin{align*}
e^{itL}F(L)=R_{\lambda}^{2^{\kappa_{0}+1}-2}e^{itL}\delta_{\lambda^{-1}}G(L),
\end{align*}
where $G(L)=R_{1}^{-2^{\kappa_{0}+1}+2}\delta_{\lambda}F(L)$.

Hence,
\begin{align*}
&\left(\sum_{\substack{\alpha\in \mathcal{I}_{r}\\
d(x_{\alpha},x_{B})\geq 2^{j}r_B}}\|\chi_{Q_{\alpha}(r)}e^{itL}F(L)\chi_{ B}f\|_{2}^{2}\right)^{\frac{1}{2}}\\
\leq &\left(\sum_{\substack{\alpha\in \mathcal{I}_{r}\\
d(x_{\alpha},x_{B})\geq 2^{j}r_B}}\big\|\sum_{\substack{\gamma\in \mathcal{I}_{r}\\
\ d(x_{\gamma},x_{B})\leq 2^{j-1}r_B}}\chi_{Q_{\alpha}(r)}R_{\lambda}^{2^{\kappa_{0}+1}-2}e^{itL}
\chi_{Q_{\gamma}(r)}\delta_{\lambda^{-1}}G(L)\chi_{B}f\big\|_{2}^{2}\right)^{\frac{1}{2}}\\
+&\left(\sum_{\substack{\alpha\in \mathcal{I}_{r}\\
d(x_{\alpha},x_{B})\geq 2^{j}r_B}}\big\|\sum_{\substack{\gamma\in \mathcal{I}_{r}\\
\ d(x_{\gamma},x_{B})> 2^{j-1}r_B}}\chi_{Q_{\alpha}(r)}R_{\lambda}^{2^{\kappa_{0}+1}-2}e^{itL}\chi_{Q_{\gamma}(r)}\delta_{\lambda^{-1}}G(L)
\chi_{B}f\big\|_{2}^{2}\right)^{\frac{1}{2}}\\
=:& {\rm I} + {\rm II}.
\end{align*}

To estimate the first term ${\rm I}$, we note that if $x\in Q_{\alpha}(r)$, then
\begin{align*}
\frac{d(x,x_{\gamma})}{r}\geq \frac{d(x_{\alpha},x_{\gamma})}{r}-\frac{d(x,x_{\alpha})}{r}\geq
\frac{d(x_{\alpha},x_{\gamma})}{r}-1\geq \frac{1}{2}\frac{d(x_{\alpha},x_{\gamma})}{r}.
\end{align*}
Besides, since $d(x_{\alpha},x_{B})\geq 2^{j}r_B$ and $d(x_{\gamma},x_{B})\leq 2^{j-1}r_B$,
\begin{align*}
\frac{d(x_{\alpha},x_{\gamma})}{r}\geq \frac{d(x_\alpha,x_B)}{r}-\frac{d(x_\gamma,x_B)}{r}\geq \frac{2^{j-1}r_B}{r}.
\end{align*}
Hence,
\begin{align*}
{\rm I}&\leq \sum_{\substack{\gamma\in \mathcal{I}_{r}\\
\ d(x_{\gamma},x_{B})\leq 2^{j-1}r_B}}\left(\sum_{\substack{\alpha\in \mathcal{I}_{r}\\
d(x_{\alpha},x_{B})\geq 2^{j}r_B}}\big\|\chi_{Q_{\alpha}(r)}R_{\lambda}^{2^{\kappa_{0}+1}-2}e^{itL}
\chi_{Q_{\gamma}(r)}\delta_{\lambda^{-1}}G(L)\chi_{B}f\big\|_{2}^{2}\right)^{\frac{1}{2}}\\
&\leq \sum_{\gamma\in \mathcal{I}_{r}}\left(\sum_{\substack{\alpha\in \mathcal{I}_{r}\\
d(x_{\alpha},x_{\gamma})\geq 2^{j-1}r_B}}\big\|\chi_{Q_{\alpha}(r)}R_{\lambda}^{2^{\kappa_{0}+1}-2}e^{itL}
\chi_{Q_{\gamma}(r)}\delta_{\lambda^{-1}}G(L)\chi_{B}f\big\|_{2}^{2}\right)^{\frac{1}{2}}\\
&\leq C\sum_{\gamma\in \mathcal{I}_{r}}\left(\sum_{\substack{\alpha\in
\mathcal{I}_{r}\\ \ d(x_{\alpha},x_{\gamma})\geq 2^{j-1}r_{B}}}
\left(\frac{d(x_{\alpha},x_{\gamma})}{r}\right)^{-2\kappa_{0}}
\left\|\left(\frac{d(\cdot,x_{\gamma})}{r}\right)^{\kappa_{0}}\chi_{Q_{\alpha}(r)}R_{\lambda}^{2^{\kappa_{0}+1}-2}e^{itL}
\chi_{Q_{\gamma}(r)}\delta_{\lambda^{-1}}G(L)\chi_{B}f\right\|_{2}^{2}\right)^{\frac{1}{2}}\\
&\leq C\left(\frac{2^{j}r_B}{r}\right)^{-\kappa_0}\sum_{\gamma\in \mathcal{I}_{r}}\left(\sum_{\alpha\in
\mathcal{I}_{r}}
\left\|\left(\frac{d(\cdot,x_{\gamma})}{r}\right)^{\kappa_{0}}\chi_{Q_{\alpha}(r)}R_{\lambda}^{2^{\kappa_{0}+1}-2}e^{itL}
\chi_{Q_{\gamma}(r)}\delta_{\lambda^{-1}}G(L)\chi_{B}f\right\|_{2}^{2}\right)^{\frac{1}{2}}\\
&\leq C\left(\frac{2^{j}r_B}{r}\right)^{-\kappa_0}\sum_{\gamma\in \mathcal{I}_{r}}
\left\|\left(\frac{d(\cdot,x_{\gamma})}{r}\right)^{\kappa_{0}}R_{\lambda}^{2^{\kappa_{0}+1}-2}e^{itL}
\chi_{Q_{\gamma}(r)}\delta_{\lambda^{-1}}G(L)\chi_{B}f\right\|_{2}.
\end{align*}
To continue, applying the following formula for commutators (see Lemma 3.1, \cite{Jensen}):
\begin{align*}
\bigg(\frac{d(\cdot,x_{\gamma})}{r}\bigg)^{\kappa_{0}}R_{\lambda}^{2^{\kappa_{0}+1}-2}e^{itL}
=\sum_{ k=0}^{\kappa_{0}}\Gamma(\kappa_{0}, k){\rm Ad}_{\gamma,r}^{ k}(R_{\lambda}^{2^{\kappa_{0}+1}-2}e^{itL})\bigg(\frac{d(\cdot,x_{\gamma})}{r}\bigg)^{\kappa_{0}- k},
\end{align*}
we obtain that there exists a constant $c>0$ such that
\begin{align}\label{commutator02}
&\left\|\bigg(\frac{d(\cdot,x_{\gamma})}{r}\bigg)^{\kappa_{0}}R_{\lambda}^{2^{\kappa_{0}+1}-2}e^{itL}
\bigg(1+\frac{d(\cdot,x_{\gamma})}{r}\bigg)^{-\kappa_{0}}\right\|_{2\rightarrow 2}\nonumber\\
&\leq C\sum_{ k=0}^{\kappa_{0}}\left\|{\rm Ad}_{\gamma,r}^{ k}(R_{\lambda}^{2^{\kappa_{0}+1}-2}e^{itL})
\bigg(\frac{d(\cdot,x_{\gamma})}{r}\bigg)^{\kappa_{0}- k}\bigg(1+\frac{d(\cdot,x_{\gamma})}{r}\bigg)^{-\kappa_{0}}\right\|_{2\rightarrow 2}\nonumber\\
&\leq C\sum_{ k=0}^{\kappa_{0}}\big{\|}{\rm Ad}_{\gamma,r}^{ k}(R_{\lambda}^{2^{\kappa_{0}+1}-2}e^{itL})\big{\|}_{2\rightarrow 2}\nonumber\\
&\leq C(1+\lambda|t|)^{\kappa_{0}}(\sqrt[m]{\lambda}r)^{-c},
\end{align}
where in the last inequality we applied Lemma \ref{commutatorestimate}.
Applying the estimate (\ref{commutator02}), we conclude that there exists a constant $c>0$ such that
\begin{align}\label{estiII}
{\rm I }&\leq C\left(\frac{2^{j}r_B}{r}\right)^{-\kappa_0}\sum_{\gamma\in \mathcal{I}_{r}}\left\|\bigg(\frac{d(\cdot,x_{\gamma})}{r}\bigg)^{\kappa_{0}}R_{\lambda}^{2^{\kappa_{0}+1}-2}e^{itL}
\bigg(1+\frac{d(\cdot,x_{\gamma})}{r}\bigg)^{-\kappa_{0}}\right\|_{2\rightarrow 2}
\left\|\bigg(1+\frac{d(\cdot,x_{\gamma})}{r}\bigg)^{\kappa_{0}}
\chi_{Q_{\gamma}(r)}\delta_{\lambda^{-1}}G(L)\chi_{B}f\right\|_{2}\nonumber\\
&\leq C\left(\frac{2^{j}r_B}{r}\right)^{-\kappa_0}(1+\lambda|t|)^{\kappa_{0}}(\sqrt[m]{\lambda}r)^{-c}\sum_{\gamma\in \mathcal{I}_{r}}\left\|
\chi_{Q_{\gamma}(r)}\delta_{\lambda^{-1}}G(L)\chi_{B}f\right\|_{2}.
\end{align}
To continue, we write
\begin{align*}
\sum_{\gamma\in \mathcal{I}_{r}}\left\|
\chi_{Q_{\gamma}(r)}\delta_{\lambda^{-1}}G(L)\chi_{B}f\right\|_{2}
&\leq\sum_{\beta\in \mathcal{I}_{r}}\sum_{\gamma\in \mathcal{I}_{r}}\left\|
\chi_{Q_{\gamma}(r)}\delta_{\lambda^{-1}}G(L)\chi_{Q_{\beta}(r)\cap B}f\right\|_{2}\\
&\leq\sum_{\beta\in \mathcal{I}_{r}}\sum_{\substack{\gamma\in \mathcal{I}_{r}\\d(x_\gamma,x_\beta)\leq 2^{3}r}}\left\|
\chi_{Q_{\gamma}(r)}\delta_{\lambda^{-1}}G(L)\chi_{Q_{\beta}(r)\cap B}f\right\|_{2}\\
&+\sum_{\beta\in \mathcal{I}_{r}}\sum_{k=4}^\infty\sum_{\substack{\gamma\in \mathcal{I}_{r}\\2^{k-1}\leq d(x_\gamma,x_\beta)\leq 2^{k}r}}\left\|
\chi_{Q_{\gamma}(r)}\delta_{\lambda^{-1}}G(L)\chi_{Q_{\beta}(r)\cap B}f\right\|_{2}={\rm (A)}+{\rm (B)}.
\end{align*}
By Lemma \ref{global00},
\begin{align*}
\left\|
\chi_{Q_{\gamma}(r)}\delta_{\lambda^{-1}}G(L)\chi_{Q_{\beta}(r)\cap B}f\right\|_{2}\leq \mu(B)^{-\sigma_{p}}(\sqrt[m]{\lambda}r_B)^{n\sigma_p}
(\sqrt[m]{\lambda}r)^{-c}\|\delta_{\lambda}F\|_{C^{1}}\|\chi_{Q_{\beta}(r)\cap B}f\|_{p}.
\end{align*}
Therefore,
\begin{align*}
{\rm (A)}&\leq \mu(B)^{-\sigma_{p}}(\sqrt[m]{\lambda}r_B)^{n\sigma_p}
(\sqrt[m]{\lambda}r)^{-c}\|\delta_{\lambda}F\|_{C^{1}}\sum_{\beta\in \mathcal{I}_{r}}\|\chi_{Q_{\beta}(r)\cap B}f\|_{p}\\
&\leq \mu(B)^{-\sigma_{p}}(\sqrt[m]{\lambda}r_B)^{\frac{n}{2}}
(\sqrt[m]{\lambda}r)^{-c}\|\delta_{\lambda}F\|_{C^{1}}\|f\|_{p}.
\end{align*}
Next, similar to the proof of Proposition \ref{p2estimate}, we obtain that
\begin{align*}
{\rm (B)}&\leq \sum_{\beta\in \mathcal{I}_{r}}\sum_{k=4}^\infty 2^{\frac{kn}{2}}\left(\sum_{\substack{\gamma\in \mathcal{I}_{r}\\2^{k-1}\leq d(x_\gamma,x_\beta)\leq 2^{k}r}}\left\|
\chi_{Q_{\gamma}(r)}\delta_{\lambda^{-1}}G(L)\chi_{Q_{\beta}(r)\cap B}f\right\|_{2}^2\right)^{1/2}\\
&\leq \sum_{\beta\in \mathcal{I}_{r}}\sum_{k=2}^\infty 2^{\frac{kn}{2}}\|\chi_{U_{k}(B(x_\beta,r))}\delta_{\lambda^{-1}}G(L)\chi_{Q_{\beta}(r)\cap B}f\|_{2}\\
&\leq \sum_{\beta\in \mathcal{I}_{r}}\sum_{k=2}^\infty 2^{-(\kappa_0-\frac{n}{2})k}\mu(B)^{-\sigma_{p}}(\sqrt[m]{\lambda}r_B)^{n\sigma_p}
(\sqrt[m]{\lambda}r)^{-c}\|\delta_{\lambda}F\|_{C^{\kappa_{0}+1}}\|\chi_{Q_\beta(r)\cap B}f\|_{p}\\
&\leq \mu(B)^{-\sigma_{p}}(\sqrt[m]{\lambda}r_B)^{n\sigma_p}
(\sqrt[m]{\lambda}r)^{-c}\|\delta_{\lambda}F\|_{C^{\kappa_{0}+1}}\sum_{\beta\in \mathcal{I}_{r}}\|\chi_{Q_\beta(r)\cap B}f\|_{p}\\
&\leq \mu(B)^{-\sigma_{p}}(\sqrt[m]{\lambda}r_B)^{\frac{n}{2}}
(\sqrt[m]{\lambda}r)^{-c}\|\delta_{\lambda}F\|_{C^{\kappa_{0}+1}}\|f\|_{p}.
\end{align*}
Combining the estimates for (A) and (B), we see that
\begin{align*}
{\rm I }&\leq C\mu(B)^{-\sigma_{p}}2^{-j\kappa_{0}}(\sqrt[m]{\lambda}r_{B})^{-\kappa_{0}+\frac{n}{2}}(1+\lambda|t|)^{\kappa_{0}}
(\sqrt[m]{\lambda}r)^{-c_{0}}\|\delta_{\lambda}F\|_{C^{\kappa_{0}+1}} \|f\|_{p}
\end{align*}
for some constants $C,c>0$.

As for the second term ${\rm II}$, we first note that
\begin{align*}
{\rm II}&=\left(\sum_{\substack{\alpha\in \mathcal{I}_{r}\\
d(x_{\alpha},x_{B})\geq 2^{j}r_B}}\big\|\sum_{\substack{\gamma\in \mathcal{I}_{r}\\
\ d(x_{\gamma},x_{B})> 2^{j-1}r_B}}\chi_{Q_{\alpha}(r)}R_{\lambda}^{2^{\kappa_{0}+1}-2}e^{itL}\chi_{Q_{\gamma}(r)}\delta_{\lambda^{-1}}G(L)
\chi_{B}f\big\|_{2}^{2}\right)^{\frac{1}{2}}\\
&\leq\Bigg\|\sum_{\substack{\gamma\in \mathcal{I}_{r}\\
\ d(x_{\gamma},x_{B})> 2^{j-1}r_B}}R_{\lambda}^{2^{\kappa_{0}+1}-2}e^{itL}\chi_{Q_{\gamma}(r)}\delta_{\lambda^{-1}}G(L)
\chi_{B}f\Bigg\|_{2}\\
&\leq\Bigg\|\sum_{\substack{\gamma\in \mathcal{I}_{r}\\
\ d(x_{\gamma},x_{B})> 2^{j-1}r_B}}\chi_{Q_{\gamma}(r)}\delta_{\lambda^{-1}}G(L)
\chi_{B}f\Bigg\|_{2}.
\end{align*}
where in the last inequality we used the spectral theorem. To continue, we apply Proposition \ref{p2estimate} to obtain that
\begin{align}\label{IIestimateA22}
{\rm II} &\leq \|\delta_{\lambda^{-1}}G(L)
\chi_{B}f\|_{L^{2}(X\backslash B(x_B,2^{j-2}r_B))}\nonumber\\
&\leq \left(\sum_{k\geq 0}\|\chi_{U_{k+j-1}(B)}\delta_{\lambda^{-1}}G(L)\chi_B f\|_{2}^{2}\right)^{1/2}\nonumber\\
&\leq \left(\sum_{k\geq 0}2^{-2(j+k)\kappa_0}\right)^{1/2}\mu(B)^{-\sigma_{p}}
(\sqrt[m]{\lambda}r_{B})^{-\kappa_{0}+n\sigma_{p}}(\sqrt[m]{\lambda}r)^{-c_{0}}\|\delta_{\lambda}F\|_{C^{\kappa_{0}+1}}\|\chi_B f\|_{p}\nonumber\\
&\leq 2^{-j\kappa_0}\mu(B)^{-\sigma_{p}}
(\sqrt[m]{\lambda}r_{B})^{-\kappa_{0}+n\sigma_{p}}(\sqrt[m]{\lambda}r)^{-c_{0}}\|\delta_{\lambda}F\|_{C^{\kappa_{0}+1}}\|\chi_B f\|_{p}.
\end{align}
This ends the proof of Lemma \ref{oscillatory2}.
\end{proof}
}
\begin{proof}[Proof of Proposition \ref{p2estimate2}]
The proof of this proposition can be shown by a similar argument as in the proof
 of Proposition \ref{p2estimate}. We omit the details and leave it to the readers.
\end{proof}

\medskip

\section{Proof of Theorem~\ref{main}}
\setcounter{equation}{0}

\subsection{Proof of boundedness on $H_{L}^{q}(X)$ for $\frac{2n}{2\kappa_0+n}<q<1$}
In this subsection, with the help of Proposition \ref{p2estimate2} and Lemma \ref{moleculardecomposition},
we will borrow the ideas from \cite{CDLY2, HM} to show Proposition \ref{tool}.

\begin{proof}[Proof of Proposition \ref{tool}]
Choose $M$ be a sufficient large constant and assume that $a(x)$ is a $(q,2,M,\epsilon)$-molecule
associated to a ball $B=B(x_B,r_{B})$ and $a=L^{M}b$ such that  for every $k=0,1,2,\dots,M$ and $j=0,1,2,\dots$,
\begin{align}\label{molecule}
\|(r_{B}^mL)^{k}b\|_{L^2(U_j(B))}\leq 2^{-j\epsilon} r_{B}^{mM}
\mu(2^jB)^{-(\frac{1}{q}-\frac{1}{2})}.
\end{align}
By Lemma \ref{moleculardecomposition}  and a standard argument (see for example, \cite{DY3, HLMMY, HM, KU}),
it suffices to show  that
\begin{align}\label{goalsch}
\left\|\left(\int_{0}^{+\infty}\int_{d(x,y)<\tau^{1/m}}\big|\phi(\tau L)e^{itL}F(L)a(y)
\big|^{2}\frac{d\mu(y)}{V(x,\tau^{1/m})}\frac{d\tau}{\tau}\right)^{\frac{1}{2}}\right\|_{L^{q}(X)}\leq C(1+|t|)^{n(\frac{1}{q}-\frac{1}{2})},
\end{align}
where, for simplicity, we denote $F(\lambda)=(1+\lambda)^{-n(\frac{1}{q}-\frac{1}{2})}$.

Let us show \eqref{goalsch}. Following  \cite[Lemma 8.1]{HM}, we write
\begin{eqnarray*}
I&=& mr_{B}^{-m}\int_{r_{B}}^{\sqrt[m]{2}r_{B}}s^{m-1}ds \cdot I\nonumber\\
&=& mr_{B}^{-m} \int_{r_{B}}^{\sqrt[m]{2}r_{B}}s^{m-1}(I-e^{-s^mL})^Mds
 +  \sum_{\nu=1}^M C_{\nu,M}  r_{B}^{-m}
\int_{r_{B}}^{\sqrt[m]{2}r_{B}}
s^{m-1} e^{- \nu s^mL} ds
\end{eqnarray*}
\noindent
for some constant $C_{\nu, M}$ depending on $\nu$ and $M$ only. Besides, it follows by
 $\partial_se^{-\nu s^mL}=-m\nu s^{m-1}Le^{-\nu s^mL} $ that
\begin{align}\label{iter0}
m\nu L\int_{r_{B}}^{\sqrt[m]{2}r_{B}} s^{m-1} e^{-\nu s^mL} ds
=e^{-\nu r_{B}^mL}-e^{-2\nu r_{B}^mL}
=  e^{-\nu r_{B}^mL}
(I-e^{-r_{B}^mL})    \sum_{\mu=0}^{\nu -1}e^{-\mu r_{B}^mL}.
\end{align}
Then we  iterate the procedure above  $M$ times to conclude that for every $x\in X,$
\begin{eqnarray}\label{iterate}
\phi(\tau L)F(L) a(x)
&=&\sum_{k=0}^{M-1} r_{B}^{-m}  \int_{r_{B}}^{\sqrt[m] 2 r_{B}} s^{m-1}(I-e^{-s^mL})^{M}G_{k,r_{B},M}(L)\phi(\tau L)F(L) (r_{B}^{-mk}L^{M-k} b)ds\nonumber\\
 && + \sum_{\nu =1}^{(2M-1)M} C(\nu, k, M) e^{-\nu  r_{B}^mL}\phi(\tau L)F(L)(I-e^{-r_{B}^mL})^M(r_{B}^{-mM} b)(x)\nonumber\\
 &=:&\sum_{k=0}^{M-1}E_{k}(x)+E_{M}(x),
\end{eqnarray}
where
$$
G_{0,r_{B},M}(\lambda):=m^M\left(r_{B}^{-m}\int_{r_{B}}^{\sqrt[m] 2 r_{B}} s^{m-1}(I-e^{-s^m\lambda})^{M}ds\right)^{M-1},
$$
and  for $k=1,2,\cdots,M-1$,
$$
G_{k,r_{B},M}(\lambda):=(1-e^{-r_{B}^m\lambda})^k \left(r_{B}^{-m}\int_{r_{B}}^{\sqrt[m] 2 r_{B}} s^{m-1}(I-e^{-s^m\lambda})^{M}ds\right)^{M-k-1}
\sum_{\nu =1}^{(2M-1)k} C(\nu,k, M) e^{-\nu r_{B}^m\lambda}.
$$
To continue, we consider two cases: $k=0, 1, \cdots, M-1$ and $k=M.$

\medskip

\noindent
{\bf Case 1.} \ $k=0, 1, \cdots, M-1$. \ In this case, we see that{
 \begin{eqnarray}\hspace{0.2cm}
&&\hspace{-1cm}\left\|\left(\int_0^{\infty}\!\!\!\!\int_{\substack{  d(x,y)<\tau^{1/m}}}
\big|e^{itL}E_k(y)\big|^2 {d\mu(y)\over V(x,\tau^{1/m})}{d\tau\over \tau}\right)^{1/2}\right\|_{L^{q}}\nonumber\\
&\leq& C\sup_{s\in [r_{B},\sqrt[m] 2 r_{B}]}\left\|\left(\int_0^{\infty}\!\!\!\!\int_{\substack{  d(x,y)<\tau^{1/m}}}
\big|e^{itL}F_{\tau,s}(L) (r_{B}^{-mk}L^{M-k} b)(y)\big|^2 {d\mu(y)\over V(x,\tau^{1/m})}{d\tau\over \tau}\right)^{1/2}\right\|_{L^{q}}\nonumber\\
&\leq& C\left(\sum_{j\geq 0}\sup_{s\in [r_{B},\sqrt[m] 2 r_{B}]}\left\|\left(\int_0^{\infty}\!\!\!\!\int_{\substack{  d(x,y)<\tau^{1/m}}}
\big|e^{itL}F_{\tau,s}(L)\chi_{U_{j}(B)} (r_{B}^{-mk}L^{M-k} b)(y)\big|^2 {d\mu(y)\over V(x,\tau^{1/m})}{d\tau\over \tau}\right)^{1/2}\right\|_{L^{q}}^q\right)^{1/q}\nonumber\\
&=:& C\left(\sum_{j\geq 0}\sup_{s\in [r_{B},\sqrt[m] 2 r_{B}]} \|E(k, j, s)\|_{L^{q}(X)}^q\right)^{1/q},
\end{eqnarray}}
where $F_{\tau,s}(\lambda):=\phi_\tau(\lambda) F(\lambda)(1-e^{-s^m\lambda})^{M}G_{k,r_{B},M}(\lambda)$ and
$$
E(k, j, s)=\left(\int_0^{\infty}\!\!\!\!\int_{\substack{  d(x,y)<\tau^{1/m}}}
\big|e^{itL}F_{\tau,s}(L)\chi_{U_{j}(B)} (r_{B}^{-mk}L^{M-k} b)(y)\big|^2 {d\mu(y)\over V(x,\tau^{1/m})}{d\tau\over \tau}\right)^{1/2}.
$$
Let us estimate the term $\|E(k, j, s)\|_{L^{q}(X)}.$ Note that  $\|F\|_{\infty}+\|G_{k,r_{B},M}\|_{L^\infty}\leq C$.
We apply the estimate  (\ref{molecule}) and the $L^{2}$-boundedness of the square function to see that
 \begin{eqnarray*}
&&\hspace{-1cm}\|E(k, j, s)\|_{L^{2}(64(1+|t|)2^{j} B)}\nonumber\\
&\leq&C \left(\int_{0}^{\infty}\int_{X}\big|e^{itL}F_{\tau,s}(L)\chi_{U_{j}(B)}(r_{B}^{-mk}L^{M-k}b)(y)\big|^{2}
\int_{d(x,y)<\tau^{1/m}}d\mu(x)\frac{d\mu(y)}{V(y,\tau^{1/m})}\frac{d\tau}{\tau}\right)^{\frac{1}{2}}\nonumber\\
&\leq&C\left(\int_{0}^{\infty}\big\|e^{itL}F_{\tau,s}(L)\chi_{U_{j}(B)}(r_{B}^{-mk}L^{M-k}b)\big\|_{2}^{2}\frac{d\tau}{\tau}\right)^{\frac{1}{2}}\nonumber\\
&\leq&C\big\|(1-e^{-s^{m}L})^{M}G_{k,r_{B},M}(L)e^{itL}F(L)\chi_{U_{j}(B)}(r_{B}^{-mk}L^{M-k}b)\big\|_{2}\nonumber\\
&\leq&C\|r_{B}^{-mk}L^{M-k}b\|_{L^{2}(U_{j}(B))}
\leq  C 2^{-j \epsilon}\mu(2^{j}B)^{-(\frac{1}{q}-\frac{1}{2})}.
\end{eqnarray*}
Hence, it follows from the H\"{o}lder's inequality and  the doubling condition (\ref{doubling}) that
$$
\|E(k, j, s)\|_{L^{q}(64(1+|t|)2^{j} B)}
\leq \|E(k, j, s)\|_{L^{2}(64(1+|t|)2^{j} B)} \left(\frac{\mu(64(1+|t|)2^{j} B)}{\mu(2^{j}B)}\right)^{\frac{1}{q}-\frac{1}{2}}
\leq C 2^{-j \epsilon}(1+|t|)^{n(\frac{1}{q}-\frac{1}{2})}.
$$

Next we show that for some $\varepsilon'>0$,
\begin{align}\label{gggoal}
\|E(k, j, s)\|_{L^{q}((64(1+|t|)2^{j} B)^c)}   \leq  C2^{-j \varepsilon'}(1+|t|)^{n(\frac{1}{q}-\frac{1}{2})}.
\end{align}
To prove \eqref{gggoal}, we write
\begin{eqnarray*}
E(k, j, s)&=&\left(\int_0^{\infty}\!\!\!\!\int_{\substack{  d(x,y)<\tau^{1/m}}}
\big|e^{itL}F_{\tau,s}(L)\chi_{U_{j}(B)} (r_{B}^{-mk}L^{M-k} b)(y)\big|^2 {d\mu(y)\over V(x,\tau^{1/m})}{d\tau\over \tau}\right)^{1/2}\nonumber\\
&\leq& \sum_{\ell\in\ZZ}\left(\int_{2^{-\ell}}^{2^{-\ell+1}}\!\!\!\!\int_{\substack{  d(x,y)<\tau^{1/m}}}
\big|e^{itL}F_{\tau,s}(L)\chi_{U_{j}(B)} (r_{B}^{-mk}L^{M-k} b)(y)\big|^2 {d\mu(y)\over V(x,\tau^{1/m})}{d\tau\over \tau}\right)^{1/2}\nonumber\\
&=:& \sum_{\ell\in\ZZ} E(k, j, s, \ell).
\end{eqnarray*}
If $\ell> \frac{1}{m}$, then let $\nu_{0}^{+}\in\mathbb{Z_{+}}$ be a positive integer such that
\begin{align}\label{nu1}
8<2^{\nu_{0}^{+}+j-\ell(m-1)/m}r_{B}\leq 16,\ \ & {\rm if}\ 2^{j-\ell(m-1)/m}r_{B}\leq \frac{1}{8};\nonumber\\
\nu_{0}^{+}=7,\ \ & {\rm if}\ 2^{j-\ell(m-1)/m}r_{B}>\frac{1}{8}.
\end{align}
If $\ell\leq \frac{1}{m}$, then let $\nu_{0}^{-}\in\mathbb{Z_{+}}$ be a positive integer such that
\begin{align}\label{nu2}
8<2^{\nu_{0}^{-}+j+(\ell-1)/m}r_{B}\leq 16,\ \ & {\rm if}\ 2^{(\ell-1)/m+j}r_{B}\leq \frac{1}{8};\nonumber\\
\nu_{0}^{-}=7,\ \ & {\rm if}\ 2^{(\ell-1)/m+j}r_{B}>\frac{1}{8}.
\end{align}
Then{
\begin{eqnarray}\label{discuss}
 \|E(k, j, s)\|_{L^{q}((64(1+|t|)2^j B)^c)}
&\leq &\left(\sum_{\ell>1/m}\|E(k, j, s, \ell)\|_{L^{q}(B(x_B,8(1+|t|)2^{\ell(m-1)/m}))}^q\right)^{1/q}\nonumber\\
&&+
 \left(\sum_{\ell>1/m}\sum_{\nu\geq \nu^+_0} \|E(k, j, s, \ell)\|_{L^{q}({U_{\nu+j}((1+|t|)B)} )}^q\right)^{1/q}\nonumber\\
 &&+  \left(\sum_{\ell\leq 1/m}\|E(k, j, s, \ell)\|_{L^{q}(B(x_B,8(1+|t|)2^{-(\ell-1)/m}))}^q\right)^{1/q}\nonumber\\
 &&+\left(\sum_{\ell\leq 1/m}\sum_{\nu\geq \nu^-_0} \|E(k, j, s, \ell)\|_{L^{q}({U_{\nu+j}((1+|t|)B)} )}^q\right)^{1/q}\nonumber\\
&=:& {\rm I} (k, j, s) + {\rm II} (k, j, s) + {\rm III} (k, j, s) + {\rm IV} (k, j, s).
\end{eqnarray}}
Let us first estimate the terms ${\rm I} (k, j, s)$ and ${\rm II} (k, j, s)$.
Note that there is no term ${\rm I} (k, j, s)$ if $2^{j-\ell(m-1)/m}r_{B}> \frac{1}{8}$
and $\ell>\frac{1}{m}$. Besides, when $2^{j-\ell(m-1)/m}r_{B}\leq\frac{1}{8}$ and $\ell>\frac{1}{m}$,
we apply the estimate (\ref{molecule}) and the doubling condition (\ref{doubling}) to get that
\begin{eqnarray}\label{global}
 \|E(k, j, s, \ell)\|^2_{L^2(X)}
&\leq&C\int_{2^{-\ell}}^{2^{-\ell+1}}\big\|e^{itL}F_{\tau,s}(L)\chi_{U_{j}(B)} (r_{B}^{-mk}L^{M-k} b)\big\|_2^2{d\tau\over \tau}\nonumber\\
&\leq&C\int_{2^{-\ell}}^{2^{-\ell+1}} \|e^{it(\cdot)}F_{\tau,s}\|_{L^\infty}^2 \|r_{B}^{-mk}L^{M-k} b\|_{L^{2}(U_{j}(B))}^2{d\tau\over \tau}\nonumber\\
&\leq&C\min\{1, (2^{\ell/m} r_{B})^{2mM}\}2^{-2(\frac{1}{q}-\frac{1}{2})n\ell} 2^{-2j \epsilon} V(x_B,2^j r_{B})^{-2(\frac{1}{q}-\frac{1}{2})},
\end{eqnarray}
which, in combination with  the doubling condition (\ref{doubling}), yields that
\begin{eqnarray*}
{\rm I} (k, j, s)
&\leq& \left(\sum_{\ell\in\mathbb{Z}}\|E(k, j, s, \ell)\|_{L^{2}(B(x_B,8(1+|t|)2^{\ell(m-1)/m}))}^q V\big(x_B,8(1+|t|)2^{\ell(m-1)/m}\big)^{q(\frac{1}{q}-\frac{1}{2})}\right)^{1/q}\\
&\leq& C\left(\sum_{\ell\in\mathbb{Z}}\min\{1, (2^{\ell/m} r_{B})^{qmM}\} 2^{-q(\frac{1}{q}-\frac{1}{2})n\ell} 2^{-jq\epsilon}
\left(\frac{V(x_B,8(1+|t|)2^{\ell(m-1)/m})}{V(x_B,2^j r_{B})}\right)^{q(\frac{1}{q}-\frac{1}{2})}\right)^{1/q}\\
&\leq& C2^{-j (\epsilon+n(\frac{1}{q}-\frac{1}{2}))}\left(\sum_{\ell\in\mathbb{Z}}\min\{1, (2^{\ell/m} r_{B})^{qmM}\} (2^{\ell/m}r_{B})^{-q(\frac{1}{q}-\frac{1}{2})n}(1+|t|)^{qn(\frac{1}{q}-\frac{1}{2})}\right)^{1/q}\\
&\leq& C2^{-j (\epsilon+n(\frac{1}{q}-\frac{1}{2}))}(1+|t|)^{n(\frac{1}{q}-\frac{1}{2})}.
\end{eqnarray*}

Next we estimate the term ${\rm II} (k, j, s)$. It follows from (\ref{nu1}) that for $\tau\in[2^{-\ell},2^{-\ell+1}]$ and $\ell>\frac{1}{m}$, we have
$
\tau^{1/m}\leq 2^{1/m}2^{-\ell/m}\leq 2^{\ell(m-1)/m}\leq2^{\nu+j-3}(1+|t|)r_{B}.
$
Therefore, if $d(x,y)<\tau^{1/m}$ and $x\in U_{\nu+j}((1+|t|)B)$, then $y\in U_{\nu+j}^{\prime}((1+|t|)B)$, where
$$
U_{\nu+j}^{\prime}((1+|t|)B)=U_{\nu+j+1}((1+|t|)B)\cup U_{\nu+j}((1+|t|)B)\cup U_{\nu+j-1}((1+|t|)B).
$$
Then we have
\begin{eqnarray}\label{klkl}
 &&\hspace{-1.2cm}\|E(k, j, s, \ell)\|^2_{L^2({U_{\nu+j}((1+|t|)B)} )}\nonumber\\
&\leq&C\sum_{w=-1}^{1}\int_{2^{-\ell}}^{2^{-\ell+1}}\big\|\chi_{U_{\nu+j+w}((1+|t|)B)}e^{itL}F_{\tau,s}(L)\chi_{U_{j}(B)} (r_{B}^{-mk}L^{M-k} b)\big\|_{2}^2
{d\tau\over \tau}\nonumber\\
&\leq&C\sum_{w=-1}^{1}\int_{2^{-\ell}}^{2^{-\ell+1}} \big\|\chi_{U_{\nu+j+w}((1+|t|)B)}e^{itL}
F_{\tau,s}(L)\chi_{U_{j}(B)} \big\|_{2\rightarrow 2}^2 \|r_{B}^{-mk}L^{M-k} b\|_{L^{2}(U_{j}(B))}^{2}{d\tau\over \tau}.
\end{eqnarray}
To deal with the $L^{2}-L^{2}$ off-diagonal term involving the oscillatory semigroup $e^{itL}$,
we apply Proposition \ref{p2estimate2}, with $r_{B}$ replaced by $2^{j}r_{B}$ and $2^{j}$
replaced by $2^{\nu+j+w+v_{t}}$, where $v_{t}\in \mathbb{N}$ and $2^{v_{t}}\leq (1+|t|)<2^{v_{t}+1}$,
to see that there exist constants $C, c_{0}>0$ such that for $w\in \{-1,0,1\}$ and $\nu\geq 7$, $2^{-\ell}\leq\tau\leq 2^{-l+1}$,
\begin{align}\label{off222}
&\hspace{-1cm}\big\|\chi_{U_{\nu+j+w}((1+|t|)B)}e^{itL}F_{\tau,s}(L)\chi_{U_{j}(B)}\big\|_{2\rightarrow 2}\nonumber\\
\leq & C(2^{\nu}(1+|t|))^{-\kappa_{0}}(2^{\ell/m}2^{j}r_{B})^{-\kappa_{0}+\frac{n}{2}}(1+2^{\ell}|t|)^{\kappa_{0}}(2^{\ell/m}r_{j,\ell})^{-c_{0}}
\|\delta_{\tau^{-1}}F_{\tau,s}\|_{C^{\kappa_{0}+1}},
\end{align}
where $r_{j,\ell}={\rm min}\{2^{j}r_{B}, 2^{-\ell/m}\}$.
Note that the conditions supp$\phi\subset (1/4,1)$ and $r_{B}\leq s\leq \sqrt[m]{2}r_{B}$ imply that if $2^{-\ell}\leq\tau\leq 2^{-l+1}$, then
\begin{align}\label{llll}
\|\delta_{\tau^{-1}}F_{\tau,s}\|_{C^{\kappa_{0}+1}}\leq C\min\{1, (2^{\ell/m} r_{B})^{mM}\}2^{-\ell n (\frac{1}{q}-\frac{1}{2})}.
\end{align}
By a simple calculation, we can see that
\begin{align}\label{van1}
\min\{1, (2^{\ell/m} r_{B})^{mM}\}(2^{\ell/m}r_{j,\ell})^{-c_{0}}\leq C\min\{1, (2^{\ell/m} r_{B})^{mM-c_{0}}\}.
\end{align}
This, in combination with the estimates (\ref{molecule}), (\ref{klkl}), (\ref{off222}) and  (\ref{llll}), yields
\begin{eqnarray*}
&&\hspace{-1.2cm}\|E(k, j, s, \ell)\|_{L^2({U_{\nu+j}((1+|t|)B)} )}\\
&\leq &C(2^{\nu}(1+|t|))^{-\kappa_{0}}(2^{\ell/m}2^{j}r_{B})^{-\kappa_{0}+\frac{n}{2}}(1+2^{\ell}|t|)^{\kappa_{0}}
\min\{1, (2^{\ell/m} r_{B})^{mM-c_{0}}\}2^{-\ell n (\frac{1}{q}-\frac{1}{2})}2^{-j\epsilon }V(x_{B},2^{j}r_{B})^{-(\frac{1}{q}-\frac{1}{2})}.
\end{eqnarray*}
This, together the doubling condition (\ref{doubling}) and the definition of $\nu_{0}^{+}$, indicates that
\begin{align*}
&{\rm II} (k, j, s)\\
 &\leq  \left(\sum_{\ell>1/m}\sum_{\nu\geq \nu^+_0} \|E(k, j, s, \ell)\|_{L^{2}({U_{\nu+j}((1+|t|)B)} )}^qV(x_{B},2^{\nu+j}(1+|t|)r_{B})^{q(\frac{1}{q}-\frac{1}{2})}\right)^{1/q}\\
&\leq C2^{-j(\kappa_{0}-\frac{n}{2}+\epsilon) }(1+|t|)^{n(\frac{1}{q}-\frac{1}{2})-\kappa_{0}}\\
&\ \ \ \ \ \times\left(\sum_{\ell>1/m}
\sum_{\nu\geq \nu^+_0} 2^{-q\nu(\kappa_{0}-n(\frac{1}{q}-\frac{1}{2}))}(2^{\ell/m}r_{B})^{-q(\kappa_{0}-\frac{n}{2})}(1+2^{\ell}|t|)^{q\kappa_{0}}\min\{1, (2^{\ell/m} r_{B})^{qmM-qc_{0}}\}2^{-\ell nq(\frac{1}{q}-\frac{1}{2})}\right)^{1/q}\\
&\leq C2^{-j (n(\frac{1}{q}-\frac{1}{2})-\frac{n}{2}+\epsilon )}(1+|t|)^{n(\frac{1}{q}-\frac{1}{2})}\sum_{\ell>1/m} (2^{\ell/m}r_B)^{-q(n(\frac{1}{q}-\frac{1}{2})-\frac{n}{2} )}\min\{1,(2^{\ell/m}r_B)^{qmM-qc_0}\}\\
&\leq C2^{-j (n(\frac{1}{q}-\frac{1}{2})-\frac{n}{2}+\epsilon )}(1+|t|)^{n(\frac{1}{q}-\frac{1}{2})},
\end{align*}
where in the last two inequalities we used the condition that $\frac{2n}{2\kappa_0+n}<q<1$.

Consider the terms ${\rm III} (k, j, s)$ and ${\rm IV} (k, j, s)$.
Note that there is no term ${\rm III} (k, j, s)$ if $2^{(\ell-1)/m+j}r_{B}>\frac{1}{8}$ and $\ell\leq 1/m$. Therefore,
 when $2^{(\ell-1)/m+j}r_{B}\leq\frac{1}{8}$ and $\ell\leq 1/m$, similar to the proof of (\ref{global}), we obtain that
\begin{eqnarray*}
 \|E(k, j, s, \ell)\|^2_{L^2(X)}
&\leq&C\int_{2^{-\ell}}^{2^{-\ell+1}} \|e^{it(\cdot)}F_{\tau,s}\|_{L^\infty}^2 \|r_{B}^{-mk}L^{M-k} b\|_{L^{2}(U_{j}(B))}^2{d\tau\over \tau}\nonumber\\
&\leq&C\int_{2^{-\ell}}^{2^{-\ell+1}}\min\{1, (\tau^{-1/m} r_{B})^{2mM}\} 2^{-2j \epsilon} V(x_B,2^j r_{B})^{-2(\frac{1}{q}-\frac{1}{2})}{d\tau\over \tau}\nonumber\\
&\leq&C\min\{1, (2^{\ell/m} r_{B})^{2mM}\} 2^{-2j \epsilon} V(x_B,2^j r_{B})^{-2(\frac{1}{q}-\frac{1}{2})},
\end{eqnarray*}
which gives
\begin{eqnarray*}
{\rm III} (k, j, s)
&\leq& \left(\sum_{\ell\leq 1/m}\|E(k,j,s,\ell)\|_{L^{2}(B(x_{B},8(1+|t|)2^{-(\ell-1)/m}))}^qV(x_{B},8(1+|t|)2^{-(\ell-1)/m})^{q(\frac{1}{q}-\frac{1}{2})}\right)^{1/q}\\
&\leq&  C2^{-j(\epsilon+(\frac{1}{q}-\frac{1}{2})n))}\left(\sum_{\ell\leq 1/m}{\rm min}\{1,(2^{\ell/m}r_{B})^{qmM}\}(2^{\ell/m}r_{B})^{-q(\frac{1}{q}-\frac{1}{2})n}(1+|t|)^{qn(\frac{1}{q}-\frac{1}{2})}\right)^{1/q}\\
&\leq&  C2^{-j(\epsilon+(\frac{1}{q}-\frac{1}{2})n)}(1+|t|)^{n(\frac{1}{q}-\frac{1}{2})}.
\end{eqnarray*}
To estimate the term ${\rm IV} (k, j, s)$, we first note that it follows from (\ref{nu2}) that for $\tau\in [2^{-\ell},2^{-\ell+1}]$, we have
$
\tau^{1/m}\leq 2^{1/m}2^{-\ell/m}\leq 2^{\nu+j-2}(1+|t|)r_{B}.
$
Hence, if $d(x,y)<\tau^{1/m}$ and $x\in U_{\nu+j}((1+|t|)B)$, then $y\in U_{\nu+j}^{\prime}((1+|t|)B)$, where
\begin{align*}
U_{\nu+j}^{\prime}((1+|t|)B):=U_{\nu+j+1}((1+|t|)B)\cup U_{\nu+j}((1+|t|)B)\cup U_{\nu+j-1}((1+|t|)B).
\end{align*}
We write
\begin{eqnarray*}\label{pri0}
&&\hspace{-1cm}\|E(k, j, s, \ell)\|^2_{L^2({U_{\nu+j}((1+|t|)B)} )}\nonumber \\
&\leq&C\sum_{w=-1}^{1}\int_{2^{-\ell}}^{2^{-\ell+1}}\big\|\chi_{U_{\nu+j+w}((1+|t|)B)}e^{itL}F_{\tau,s}(L)\chi_{U_{j}(B)} (r_{B}^{-mk}L^{M-k} b)\big\|_{2}^2
{d\tau\over \tau}\nonumber\\
&\leq&C\sum_{w=-1}^{1}\int_{2^{-\ell}}^{2^{-\ell+1}}
\big\|\chi_{U_{\nu+j+w}((1+|t|)B)}e^{itL}F_{\tau,s}(L)\chi_{U_{j}(B)}
\big\|_{2\rightarrow 2}^2 \|r_{B}^{-mk}L^{M-k} b\|_{L^{2}(U_{j}(B))}^{2}{d\tau\over \tau}.
\end{eqnarray*}
Note that the conditions supp$\phi\subset (1/4,1)$ and $r_{B}\leq s\leq \sqrt[m]{2}r_{B}$ implies if $2^{-\ell}\leq\tau\leq 2^{-l+1}$, then
\begin{align}
\|\delta_{\tau^{-1}}F_{\tau,s}\|_{C^{\kappa_{0}+1}}\leq C\min\{1, (2^{\ell/m} r_{B})^{mM}\}.
\end{align}
This, together with the estimate (\ref{van1}), implies
\begin{align*}
\big\|\chi_{U_{\nu+j+w}((1+|t|)B)}e^{itL}F_{\tau,s}(L)\chi_{U_{j}(B)}\big\|_{2\rightarrow 2}
\leq C(2^{\nu}(1+|t|))^{-\kappa_{0}}(2^{\ell/m}2^{j}r_{B})^{-\kappa_{0}+\frac{n}{2}}(1+2^{\ell}|t|)^{\kappa_{0}}\min\{1, (2^{\ell/m} r_{B})^{mM-c_{0}}\},
\end{align*}
and thus
\begin{eqnarray*}
&&\hspace{-1.2cm}\|E(k, j, s, \ell)\|_{L^2({U_{\nu+j}((1+|t|)B)} )}\\
&\leq &C(2^{\nu}(1+|t|))^{-\kappa_{0}}(2^{\ell/m}2^{j}r_{B})^{-\kappa_{0}+\frac{n}{2}}(1+2^{\ell}|t|)^{\kappa_{0}}
\min\{1, (2^{\ell/m} r_{B})^{mM-c_{0}}\}2^{-j\epsilon }V(x_{B},2^{j}r_{B})^{-(\frac{1}{q}-\frac{1}{2})}.
\end{eqnarray*}
Hence,
\begin{align*}
&{\rm IV} (k, j, s)\\
&\leq \left(\sum_{\ell\leq 1/m}\sum_{\nu\geq \nu^-_0} \|E(k, j, s, \ell)\|_{L^2({U_{\nu+j}((1+|t|)B)} )}^qV(x_{B},2^{\nu+j}((1+|t|)B))^{q(\frac{1}{q}-\frac{1}{2})}\right)^{1/q}\\
&\leq 2^{-jq(\kappa_0-\frac{n}{2}+\epsilon)}(1+|t|)^{n(\frac{1}{q}-\frac{1}{2})}\left(\sum_{\ell\leq 1/m}\sum_{\nu\geq \nu^-_0}2^{-q\nu(\kappa_0-n(\frac{1}{q}-\frac{1}{2}))}(2^{\ell/m}r_B)^{-q(\kappa_0-\frac{n}{2})}\min\{1, (2^{\ell/m} r_{B})^{qmM-qc_{0}}\} \right)^{1/q}\\
&\leq C2^{-j((\frac{1}{q}-\frac{1}{2})n-\frac{n}{2}+\epsilon)}(1+|t|)^{n(\frac{1}{q}-\frac{1}{2})}\left(\sum_{\ell\leq 1/m}(2^{\ell/m}r_{B})^{-q((\frac{1}{q}-\frac{1}{2})n-\frac{n}{2})}\min\{1, (2^{\ell/m} r_{B})^{qmM-qc_{0}}\}\right)^{1/q}\\
&\leq C2^{-j((\frac{1}{q}-\frac{1}{2})n-\frac{n}{2}+\epsilon)}(1+|t|)^{n(\frac{1}{q}-\frac{1}{2})},
\end{align*}
where in the last two inequalities we used the condition $\frac{2n}{2\kappa_0+n}<q<1$.

Combining the estimates for ${\rm I} (k, j, s)$, ${\rm II} (k, j, s)$,
${\rm III} (k, j, s)$ and ${\rm IV} (k, j, s)$, we obtain the estimate (\ref{gggoal}) and therefore,
\begin{eqnarray*}
  \sum_{k=0}^{M-1} \left\|\left(\int_0^{\infty}\!\!\!\!\int_{\substack{  d(x,y)<\tau^{1/m}}}
\big|e^{itL}E_k(y)\big|^2 {d\mu(y)\over V(x,\tau^{1/m})}{d\tau\over \tau}\right)^{1/2}\right\|_{L^{q}}
&\leq&  C\sum_{k=0}^{M-1} \left(\sum_{j\geq 0}\sup_{s\in [r_{B},\sqrt[m] 2 r_{B}]} \|E(k, j, s)\|_{L^{q}(X)}^q\right)^{1/q}\\
   &\leq & C\left(\sum_{j\geq 0}2^{-j q\varepsilon'}\right)^{1/q}(1+t)^{n(\frac{1}{q}-\frac{1}{2})} \\
   &\leq& C (1+t)^{n(\frac{1}{q}-\frac{1}{2})}.
\end{eqnarray*}

\medskip

\noindent
{\bf Case 2.} \ $k=M$. \ Similarly to the proof of estimating $e^{itL}E_{k}$ for $k=1,2,\cdots,M-1$ as in {\bf Case 1}, we conclude that
\begin{align*}
\left\|\left(\int_{0}^{\infty}\int_{d(x,y)<\tau^{1/m}}\big|e^{itL}E_{M}(y)\big|^{2}
\frac{d\mu(y)}{V(x,\tau^{1/m})}\frac{d\tau}{\tau}\right)^{\frac{1}{2}}\right\|_{L^{q}}
\leq C(1+|t|)^{n(\frac{1}{q}-\frac{1}{2})}.
\end{align*}
This finishes the proof of (\ref{goalsch}) and then Proposition \ref{tool}.
\end{proof}

\subsection{Proof of boundedness on $L^{p}(X)$}
\setcounter{equation}{0}
This subsection will show how to apply the $H_{L}^{q}-H_{L}^{q}$ ($\frac{2n}{2\kappa_0+n}<q<1$) boundedness for Schr\"dinger
groups and the complex interpolation method we obtain in the appendix to get the $L^{p}$ boundedness for Schr\"odinger groups.

\begin{proof}[Proof of   Theorem~\ref{main}]
Inspired by \cite{CDLY2}, we consider the analytic family of operators
\begin{align*}
T_{z}:=e^{(1-z)^{2}}(1+|t|)^{-(1-z)(\frac{1}{q}-\frac{1}{2})n}(I+L)^{-(1-z)(\frac{1}{q}-\frac{1}{2})n}e^{itL},\ \ 0\leq {\rm Re}z\leq 1.
\end{align*}
Then $T_{z}$ is a holomorphic function of $z$ in the sense that
\begin{align*}
z\rightarrow \int_{X}T_{z}f(x)g(x)d\mu(x)
\end{align*}
for $f,g\in L^{2}(X)$.

By the spectral theorem,
\begin{align*}
\|T_{1+iy}f\|_{2}=e^{-y^{2}}\|(I+L)^{iy(\frac{1}{q}-\frac{1}{2})n}e^{itL}\|_{2}\leq C\|f\|_{2}.
\end{align*}

Besides, it follows from the Theorem \ref{spectralmultiplier} that for any $\frac{2n}{2\kappa_0+n}<q<1$,
\begin{align*}
\|(I+L)^{iy(\frac{1}{q}-\frac{1}{2})n}\|_{H_{L}^{q}\rightarrow H_{L}^{q}}\leq C(1+|y|)^{\kappa_{0}+1}.
\end{align*}
{This, in combination with Proposition \ref{tool} (note that the argument here also deduce the $H_L^q\rightarrow L^q$ boundedness of the same operator), yields
\begin{align*}
\|T_{iy}f\|_{L^q}&=e^{1-y^{2}}(1+|t|)^{-n(\frac{1}{q}-\frac{1}{2})}\|e^{itL}(I+L)^{-n(\frac{1}{q}-\frac{1}{2})}(I+L)^{iy(\frac{1}{q}-\frac{1}{2})n}f\|_{L^q}\\
&\leq Ce^{1-y^{2}}\|(I+L)^{iy(\frac{1}{q}-\frac{1}{2})n}f\|_{H_{L}^{q}}\\
&\leq C\|f\|_{H_{L}^{q}}.
\end{align*}
By complex interpolation between ${\rm Re} z=0$ and ${\rm Re} z=1$, we obtain that
for $\theta\in (0,1)$ and $p\in (q,2)$,
\begin{align*}
\|T_{\theta}(f)\|_{H_L^p}\leq C\|f\|_{[H_{L}^{q},H_{L}^{2}]_{\theta}}\leq C\|f\|_{H_{L}^{p}}.
\end{align*}
where the parameter $\theta$ satisfies $\frac{1}{p}=\frac{1-\theta}{q}-\frac{\theta}{2}$.
This, together with Lemma \ref{Hp}, shows that for any $p_{0}<p\leq 2$,
\begin{align*}
\|(I+L)^{-\sigma_{p}n}e^{itL}f\|_{p}=\|e^{-(1-\theta)^{2}}(1+|t|)^{(1-\theta)(\frac{1}{q}-\frac{1}{2})n}T_{\theta}f\|_{p}\leq C(1+|t|)^{\sigma_{p} n}\|f\|_{p}.
\end{align*}}
By duality, we can obtain the corresponding results for $2\leq p<p_{0}^{\prime}$. Then the $L^{p}$ boundedness for Schr\"{o}dinger groups is proven.
\end{proof}

\medskip

\section{ Results on  Hardy space $H_{L}^{q}(X):$  Proof of Lemmas~ \ref{moleculardecomposition},
\ref{cominter} and \ref{Hp} }

In this section, with the help of the estimate \eqref{p2estimate}, we will give the proof of Lemmas \ref{moleculardecomposition},
\ref{cominter}, \ref{Hp} and establish the spectral theorem on $H_{L}^{q}$ for $\frac{2n}{2\kappa_0+n}<q\leq1$.

\subsection{Tent space}
We recall some preliminaries on tent spaces of homogeneous type.

Let $F$ be a measurable function defined on $X\times (0,\infty)$. Denote
\begin{align*}
\mathcal{A}(F)(x)=\left(\int_{0}^{\infty}\int_{d(x,y)<\tau}|F(y,\tau)|^{2}\frac{d\mu(y)}{V(x,t)}\frac{d\tau}{\tau}\right)^{\frac{1}{2}}.
\end{align*}
Following \cite{CMS}, for any $0< p<\infty$, the tent space $T_{2}^{p}(X)$ is defined as the space of measurable
functions $F$ on $X\times (0,\infty)$ such that $\mathcal{A}(F)\in L^{p}(X)$, equipped with the (quasi-)norm:
\begin{align*}
\|F\|_{T_{2}^{p}(X)}=\|\mathcal{A}(F)\|_{L^{p}(X)}.
\end{align*}
Next, we recall the atomic decomposition theory for tent spaces, which was originally studied in \cite{CMS}.
\begin{definition}
{Given $0<q\leq 1$}, a measurable function $A(x,\tau)$ on $X\times (0,\infty)$ is said to be a $T_{2}^{q}$-atom if there exists a ball
 $B\subset X$ such that $A$ is supported in $\hat{B}$ and satisfies
\begin{align*}
\left(\int_{0}^{\infty}\int_{X}|A(x,\tau)|^{2}\frac{d\mu(x)d\tau}{\tau}\right)^{\frac{1}{2}}\leq \mu(B)^{-(\frac{1}{q}-\frac{1}{2})}.
\end{align*}
\end{definition}
The following atomic decomposition theorem for tent spaces was obtained in \cite{Russ}.
\begin{lemma}\label{tentde}
{Given $0<q\leq 1$, then for every $F\in T_{2}^{q}(X)$ there exists a constant $C>0$, a sequence $\{\lambda_{j}\}_{j=0}^{\infty}\in\ell^{q}$ and
 a sequence of $T_{2}^{q}$-atoms $\{A_{j}\}_{j=0}^{\infty}$ such that
\begin{align*}
F=\sum_{j=0}^{\infty}\lambda_{j}A_{j}\ \ in\ T_{2}^{q}(X)\ a.e.\ in\ X\times(0,\infty)
\end{align*}
and
\begin{align*}
\left(\sum_{j=0}^{\infty}|\lambda_{j}|^q\right)^{1/q}\leq C\|F\|_{T_{2}^{q}(X)}.
\end{align*}
In addition, if $F\in T_{2}^{q}(X)\cap T_{2}^{2}(X)$, then the summation also converges in $T_{2}^{2}(X)$.}
\end{lemma}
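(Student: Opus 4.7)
The plan is to imitate the Coifman--Meyer--Stein atomic decomposition for tent spaces, adapted to a space of homogeneous type as in Russ's argument. The starting point is that $F\in T_{2}^{1}(X)$ means $\mathcal{A}(F)\in L^{1}(X)$, so the sublevel/superlevel structure of $\mathcal{A}(F)$ controls where the ``mass'' of $F$ lives in $X\times(0,\infty)$.

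First I would form the superlevel sets $O_{k}:=\{x\in X:\mathcal{A}(F)(x)>2^{k}\}$, $k\in\mathbb{Z}$, and note that $\mu(O_{k})<\infty$ since $\mathcal{A}(F)\in L^{1}(X)$. Using the doubling property of $\mu$, apply a Whitney-type covering of each $O_{k}$ by balls $\{B_{k,j}=B(x_{k,j},r_{k,j})\}_{j}$ with bounded overlap, such that the dilates $C B_{k,j}$ still lie inside $O_{k}$ but $\tilde C B_{k,j}$ meets $O_{k}^{c}$ (standard Whitney in homogeneous type). Next define the truncated tent $\hat O_{k}:=\{(y,\tau)\in X\times(0,\infty):\{x:d(x,y)<\tau\}\subset O_{k}\}$ and decompose
\begin{equation*}
F=\sum_{k\in\mathbb{Z}}F\,\chi_{\hat O_{k}\setminus \hat O_{k+1}}=\sum_{k,j}\lambda_{k,j}A_{k,j},
\end{equation*}
where $A_{k,j}:=\lambda_{k,j}^{-1}F\,\chi_{(\hat O_{k}\setminus\hat O_{k+1})\cap (\widehat{C B_{k,j}}\setminus\bigcup_{i<j}\widehat{C B_{k,i}})}$ and $\lambda_{k,j}$ is chosen to normalize the size condition.

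The key step is to verify that each $A_{k,j}$ is (up to a constant) a $T_{2}^{1}$--atom supported in $\widehat{CB_{k,j}}$, and that $\sum_{k,j}|\lambda_{k,j}|\lesssim \|F\|_{T_{2}^{1}}$. The size estimate
\begin{equation*}
\left(\int_{0}^{\infty}\!\!\int_{X}|A_{k,j}(x,\tau)|^{2}\frac{d\mu(x)d\tau}{\tau}\right)^{1/2}\leq \mu(CB_{k,j})^{-1/2}
\end{equation*}
is obtained by a duality/Carleson argument: one tests against an arbitrary $G\in T_{2}^{\infty}$ localized on $CB_{k,j}$, and uses that outside $\hat O_{k+1}$ the ``Carleson'' quantity is controlled by $2^{k+1}$, together with $\mu(O_{k})\lesssim 2^{-k}\|F\|_{T_{2}^{1}}$. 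Setting $\lambda_{k,j}\sim 2^{k}\mu(CB_{k,j})$ then yields $\sum_{k,j}|\lambda_{k,j}|\lesssim \sum_{k}2^{k}\mu(O_{k})\lesssim \|\mathcal{A}(F)\|_{L^{1}}=\|F\|_{T_{2}^{1}}$.

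The main obstacle I expect is establishing the atomic size control cleanly in the homogeneous-type setting: the standard Euclidean proof uses integration over a Whitney-type region together with an $L^{2}$ Carleson duality $(T_{2}^{2})^{*}=T_{2}^{2}$ localized to tents, which requires some care since neither the metric nor the measure is translation invariant; Russ's covering argument and the geometric doubling property are exactly what make this go through. Finally, convergence in $T_{2}^{1}(X)$ follows from $\sum|\lambda_{k,j}|<\infty$, while the additional convergence in $T_{2}^{2}(X)$ when $F\in T_{2}^{1}\cap T_{2}^{2}$ is obtained by a straightforward dominated convergence argument, since the partial sums are pointwise dominated by $|F|$ on $X\times(0,\infty)$.
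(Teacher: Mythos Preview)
Your outline is essentially the Coifman--Meyer--Stein/Russ argument for the atomic decomposition of tent spaces on spaces of homogeneous type, which is exactly what the paper invokes: the paper does not prove Lemma~\ref{tentde} at all but simply cites \cite{Russ} for it. So your proposal matches the intended (cited) proof and there is nothing to compare.
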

\subsection{Molecular decompositions for Hardy spaces}
This subsection is devoted to giving molecular decompositions for Hardy spaces $H_{L}^{q}$ for $\frac{2n}{2\kappa_0+n}<q\leq1$. To begin with,
we consider the operator: $\pi_{L}:T_{2}^{2}(X)\rightarrow L^{2}(X)$, given by
\begin{align*}
\pi_{L}(F)(x):=\int_{0}^{\infty}\phi(\tau^{m}L)(F(\cdot,\tau))(x)\frac{d\tau}{\tau},
\end{align*}
 where the improper integral converges weakly in $L^{2}(X)$.  By duality and the
boundedness of square function, it is not difficult to see that $\pi_{L}$ is bounded from $T_{2}^{2}(X)$ to $L^{2}(X)$.

\begin{lemma}\label{tentatom}
Given $\frac{2n}{2\kappa_0+n}<q\leq1$, for any $T_{2}^{q}(X)$-atom $A(y,\tau)$ associated to a ball $B$(or more precisely, to its tent $\hat{B}$), there is a
 uniform constant $C>0$ such that $C^{-1}\pi_{L}(A)$ is a $(q,2,M,\epsilon)$-molecule associated to $B$ for some $\epsilon>0$.
\end{lemma}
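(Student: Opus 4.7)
\medskip

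\noindent\textbf{Proof plan for Lemma~\ref{tentatom}.}
The plan is to exhibit the preimage $b$ of $\pi_{L}(A)$ under $L^{M}$ explicitly and to verify the molecular size estimates directly via the off-diagonal bound of Proposition~\ref{p2estimate}. Since $\phi$ is supported in $(1/4,1)$, the function $\tilde{\phi}(\lambda):=\phi(\lambda)/\lambda^{M}$ is again smooth and supported in $(1/4,1)$, so that $\phi(\lambda)=\lambda^{M}\tilde{\phi}(\lambda)$. Accordingly, I would define
\begin{align*}
b(x):=\int_{0}^{\infty}\tau^{mM}\,\tilde{\phi}(\tau^{m}L)\big(A(\cdot,\tau)\big)(x)\,\frac{d\tau}{\tau},
\end{align*}
so that $\pi_{L}(A)=L^{M}b$, with the integral converging weakly in $L^{2}(X)$ by the same boundedness argument used to define $\pi_{L}$ on $T_{2}^{2}(X)$. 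For each $0\leq k\leq M$, commuting $L^{k}$ under the integral yields
\begin{align*}
(r_{B}^{m}L)^{k}b(x)=r_{B}^{mk}\int_{0}^{r_{B}}\tau^{m(M-k)}\,\psi_{k}(\tau^{m}L)\big(A(\cdot,\tau)\big)(x)\,\frac{d\tau}{\tau},
\end{align*}
where $\psi_{k}(\lambda):=\lambda^{k}\tilde{\phi}(\lambda)$ is supported in $(1/4,1)$, and the upper limit is $r_{B}$ because $A(\cdot,\tau)$ vanishes for $\tau>r_{B}$ and its support lies in $B$ by the tent condition.

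Next I would apply Proposition~\ref{p2estimate} (with $p=2$, so $\sigma_{p}=0$) to the Borel function $F(\lambda):=\psi_{k}(\tau^{m}\lambda)$, which is supported in $[\tau^{-m}/4,\tau^{-m}]$. With this choice, $\delta_{\tau^{-m}}F=\psi_{k}$ is smooth and independent of $\tau$, so $\|\delta_{\tau^{-m}}F\|_{C^{\kappa_{0}+1}}\leq C$. Moreover, as $\tau\leq r_{B}$ in the integration range, $\min\{r_{B},\tau\}=\tau$, so the factor $(\sqrt[m]{\lambda}r)^{-c_{0}}$ equals $1$. Consequently, for $j\geq 5$,
\begin{align*}
\big\|\chi_{U_{j}(B)}\psi_{k}(\tau^{m}L)\chi_{B}\big\|_{2\to 2}\leq C\,2^{-j\kappa_{0}}(\tau/r_{B})^{\kappa_{0}},
\end{align*}
while for $j\in\{0,1,\dots,4\}$ I would use the trivial $L^{2}$ bound $\|\psi_{k}(\tau^{m}L)\|_{2\to 2}\leq C$ (spectral theorem), which is absorbed into the same estimate at the cost of enlarging $C$.

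Feeding this into the integral and applying the Cauchy--Schwarz inequality against the $T_{2}^{1}$-atom normalization $\int_{0}^{r_{B}}\|A(\cdot,\tau)\|_{2}^{2}\,d\tau/\tau\leq \mu(B)^{-1}$ gives
\begin{align*}
\big\|(r_{B}^{m}L)^{k}b\big\|_{L^{2}(U_{j}(B))}
&\leq C\,2^{-j\kappa_{0}}r_{B}^{mk-\kappa_{0}}\!\left(\int_{0}^{r_{B}}\!\tau^{2m(M-k)+2\kappa_{0}}\,\frac{d\tau}{\tau}\right)^{\!1/2}\!\!\mu(B)^{-1/2}\\
&\leq C\,2^{-j\kappa_{0}}\,r_{B}^{mM}\,\mu(B)^{-1/2};
\end{align*}
the $\tau$-integral is finite because $m(M-k)+\kappa_{0}>0$ for every $k\in\{0,\dots,M\}$, using $\kappa_{0}\geq 1$. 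Finally, invoking the doubling property~\eqref{doubling} to pass from $\mu(B)^{-1/2}$ to $\mu(2^{j}B)^{-1/2}$ costs at most $2^{jn/2}$, so the molecular estimate holds with $\epsilon:=\kappa_{0}-n/2>0$ since $\kappa_{0}=[n/2]+1>n/2$.

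The only delicate point is the interplay between the polynomial gain $(\tau/r_{B})^{\kappa_{0}}$ from the off-diagonal estimate and the integrability of $\tau^{m(M-k)+\kappa_{0}-1}$ at $\tau=0$ in the extreme case $k=M$; this is exactly where the hypothesis $\kappa>\kappa_{0}$ (and hence the strict positivity of $\kappa_{0}$) is crucial, and it is also what forces $M$ to be taken sufficiently large in Lemma~\ref{moleculardecomposition} so as to accommodate the $L^{2}$-convergence of $b$. Everything else is book-keeping with the doubling inequality.
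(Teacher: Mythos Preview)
Your argument is correct and follows the same strategy as the paper: define $b$ via the integral representation, invoke Proposition~\ref{p2estimate} with $p=2$ to gain the factor $(\tau/r_{B})^{\kappa_{0}}$, integrate in $\tau$ against the atom's $T_{2}^{2}$-normalization, and conclude via doubling with $\epsilon=\kappa_{0}-n/2>0$. The only cosmetic difference is that the paper argues by duality (pairing with $h\in L^{2}(U_{\ell}(B))$ and moving the multiplier onto $h$) whereas you apply Minkowski's inequality directly; the two routes are equivalent.

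One small caveat: for $j\le 4$ and $k=M$ your suggested ``use the trivial bound $\|\psi_{k}(\tau^{m}L)\|_{2\to2}\le C$ and absorb into the same estimate'' does not quite work, since without the $(\tau/r_{B})^{\kappa_{0}}$ gain the Cauchy--Schwarz step produces the divergent integral $\int_{0}^{r_{B}}d\tau/\tau$. The fix is immediate: for $k=M$ one has $(r_{B}^{m}L)^{M}b=r_{B}^{mM}\pi_{L}(A)$, and the $T_{2}^{2}\to L^{2}$ boundedness of $\pi_{L}$ stated just before the lemma gives $\|(r_{B}^{m}L)^{M}b\|_{L^{2}(X)}\le C\,r_{B}^{mM}\mu(B)^{-1/2}$, which handles all small $j$ at once. (The paper itself only writes out the case $\ell\ge 5$ and tacitly uses the same global bound for $\ell\le 4$.) Your closing remark that the hypothesis $\kappa>\kappa_{0}$ is what makes the $\tau$-integral converge at $k=M$ is a bit off: that convergence needs only $\kappa_{0}\ge 1$, which holds by definition; the assumption $\kappa>\kappa_{0}$ enters earlier, in the proof of Proposition~\ref{p2estimate} itself.
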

\begin{proof}

By the definition of $T_{2}^{q}(X)$-atom,
\begin{align*}
\left(\int_{X\times (0,\infty)}|A(y,\tau)|^{2}\frac{d\mu(y)d\tau}{\tau}\right)^{\frac{1}{2}}\leq \mu(B)^{-(\frac{1}{q}-\frac{1}{2})}.
\end{align*}
We write $a=\pi_{L}(A)=L^{M}b$, where $b(x)=\int_{0}^{\infty}L^{-M}\phi(\tau^{m}L)(A(\cdot,\tau))(x)\frac{d\tau}{\tau}$. Next
for any $\ell\geq 0$, $k=0,1,\ldots,M$, we estimate $\|(r_{B}^{m}L)^{k}b\|_{L^{2}(U_{\ell}(B))}$ by duality.
Consider $h\in L^{2}(U_{\ell}(B))$ such that $\|h\|_{L^{2}(U_{\ell}(B))}=1$. Then since $L$ is self-adjoint, we can
apply Proposition \ref{p2estimate} to conclude that when $\ell \geq 5$,
\begin{align*}
\left|\int_{X}(r_{B}^{m}L)^{k}b(x)h(x)d\mu(x)\right|
&\leq r_{B}^{mk}\int_{\hat{B}}|A(y,\tau)||L^{k-M}\phi(\tau^{m}L)h(y)|\frac{d\mu(y)d\tau}{\tau}\\
&\leq r_{B}^{mk}\|A\|_{T_{2}^{2}(X)}\left(\int_{\hat{B}}|(\tau^{m}L)^{k-M}\phi(\tau^{m}L)h(y)|^{2}\frac{d\mu(y)d\tau}{\tau^{2m(k-M)+1}}\right)^{\frac{1}{2}}\\
&\leq r_{B}^{mk} \mu(B)^{-(\frac{1}{q}-\frac{1}{2})}\left(\int_{0}^{r_{B}}\|\chi_{B}(\tau^{m}L)^{k-M}\phi(\tau^{m}L)\chi_{U_{\ell}(B)}h\|_{2}^{2}
\frac{d\tau}{\tau^{2m(k-M)+1}}\right)^{\frac{1}{2}}\\
&\leq C2^{-\ell\kappa_0}r_{B}^{mk} \mu(B)^{-(\frac{1}{q}-\frac{1}{2})}\left(\int_{0}^{r_{B}}\left(\frac{r_{B}}{\tau}\right)^{-2\kappa_{0}+n}
\frac{d\tau}{\tau^{2m(k-M)+1}}\right)^{\frac{1}{2}}\\
&\leq C2^{-\ell(\kappa_{0}-n(\frac{1}{q}-\frac{1}{2}))}r_{B}^{mM}\mu(2^{\ell}B)^{-(\frac{1}{q}-\frac{1}{2})}.
\end{align*}
Taking supremum over all $h\in L^{2}(U_{\ell}(B))$ satisfying $\|h\|_{L^{2}(U_{\ell}(B))}=1$ and choosing
$0<\epsilon<\kappa_{0}-n(\frac{1}{q}-\frac{1}{2})$, we obtain that for any $\ell\geq 0$, $k=0,1,\ldots,M$,
\begin{align*}
\|(r_{B}^{m}L)^{k}b\|_{L^{2}(U_{\ell}(B))}\leq C2^{-\ell(\kappa_{0}-n(\frac{1}{q}-\frac{1}{2}))}r_{B}^{mM}\mu(2^{\ell}B)^{-(\frac{1}{q}-\frac{1}{2})}
\leq C2^{-\ell\epsilon}r_{B}^{mM}\mu(2^{\ell}B)^{-(\frac{1}{q}-\frac{1}{2})}.
\end{align*}
This implies that $C^{-1}\pi_{L}(A)$ is a $(q,2,M,\epsilon)$-molecule.
\end{proof}

\begin{proof}[Proof of Lemma \ref{moleculardecomposition}]
Set $\mathcal{H}^{2}(X)=\overline{\{Lu\in L^{2}(X):u\in L^{2}(X)\}}$.
Recall that $H_{L,mol,M,\epsilon}^{q}(X)$  and $H_{L}^{q}(X)$ are the completions of $\mathbb{H}_{L,mol,M,\epsilon}^{q}(X)$
and $H_{L}^{q}(X)\cap \mathcal{H}^{2}(X)$, respectively. It suffices to show that $H_{L,mol,M,\epsilon}^{q}(X)$ and $H_{L}^{q}(X)$
have the same dense subset $\mathbb{H}_{L,mol,M,\epsilon}^{q}(X)=H_{L}^{q}(X)\cap \mathcal{H}^{2}(X)$ with equivalent norms.

\medskip

\noindent
\textbf{Step I}: $\mathbb{H}_{L,mol,M,\epsilon}^{q}(X)\subset (H_{L}^{q}(X)\cap \mathcal{H}^{2}(X))$.

By definition,   $\mathbb{H}_{L,mol,M,\epsilon}^{q}(X)\subset \mathcal{H}^{2}(X)$. Therefore, by
 a standard density argument, it will be enough to show that for every $(q,2,M,\epsilon)$-molecule $a(x)$ associated to a ball
 $B=B(x_{B},r_{B})$ of $X$, we have
\begin{align*}
\|S_{L,\phi}(a)\|_{L^{q}(X)}\leq C.
\end{align*}
Denote $F(y,\tau)=\phi(\tau^{m}L)a$. By a simple change of variable, it is enough to show that
\begin{align}\label{tent}
\|F\|_{T_{2}^{q}(X)}\leq C.
\end{align}
Now let $\eta_{0}=\chi_{2B\times (0,2r_{B})}$ and for all $j\geq 1$, define $\eta_{j}=\chi_{U_{j+1}(B)\times (0,r_{B})}$,
$\eta_{j}^{\prime}=\chi_{U_{j+1}(B)\times (r_{B},2^{j+1}r_{B})}$ and $\eta_{j}^{\prime\prime}=\chi_{2^{j}B\times(2^{j}r_{B},2^{j+1}r_{B})}$.
Then we decompose $F$ as follows.
\begin{align*}
F=\eta_{0}F+\sum_{j=1}^{\infty}\eta_{j}F+\sum_{j=1}^{\infty}\eta_{j}^{\prime}F+\sum_{j=1}^{\infty}\eta_{j}^{\prime\prime}F.
\end{align*}

Next, we will show that there exist constants  $C,\sigma>0$, such that

\smallskip

\noindent
(a) For any $j\geq 0$, $\|\eta_{j}F\|_{T_{2}^{2}(X)}\leq C2^{-j\sigma}\mu(2^{j}B)^{-(\frac{1}{q}-\frac{1}{2})}$;

\smallskip

\noindent
(b) For any $j\geq 1$, $\|\eta_{j}^{\prime}F\|_{T_{2}^{2}(X)}\leq C2^{-j\sigma}\mu(2^{j}B)^{-(\frac{1}{q}-\frac{1}{2})}$;

\smallskip

\noindent
(c) For any $j\geq 1$, $\|\eta_{j}^{\prime\prime}F\|_{T_{2}^{2}(X)}\leq C2^{-j\sigma}\mu(2^{j}B)^{-(\frac{1}{q}-\frac{1}{2})}$.

\smallskip

\noindent
Since each $\eta_{j}F$, $\eta_{j}^{\prime}F$, $\eta_{j}^{\prime\prime}F$ are supported in $\widehat{2^{j+2}B}$, these three
estimates will imply that $\frac{1}{C}2^{j\sigma}\eta_{j}F$, $\frac{1}{C}2^{j\sigma}\eta_{j}^{\prime}F$ and
 $\frac{1}{C}2^{j\sigma}\eta_{j}^{\prime\prime}F$ are atoms in $T_{2}^{q}(X)$, respectively, and thus the estimate (\ref{tent}) will be done.

Now we show the estimates (a),(b),(c). To show (a), we first apply the $L^{2}$ boundedness of the square function to obtain that
\begin{align*}
\|\eta_{0}F\|_{T_{2}^{2}(X)}\leq C\left(\int_{0}^{\infty}\int_{X}|\phi(\tau^{m}L)a(y)|^{2}\frac{d\mu(y)d\tau}{\tau}\right)^{\frac{1}{2}}
\leq C\|a\|_{L^{2}(X)}\leq C\mu(B)^{-(\frac{1}{q}-\frac{1}{2})}.
\end{align*}
For $j\geq 1$, we apply the formula (\ref{iterate}) to obtain that
\begin{eqnarray}\label{il0}
 \|\chi_{U_{j+1}(B)}\phi(\tau^{m}L)a\|_{2}
&\leq& \sum_{\ell=0}^{\infty}  {\rm I}(\ell,\tau) +  \sum_{\ell=0}^{\infty}  {\rm II}(\ell,\tau),
\end{eqnarray}
where
$$
{\rm I}(\ell,\tau) = \sum_{k=0}^{M-1}r_{B}^{-m}\int_{r_{B}}^{\sqrt[m]{2}r_{B}}s^{m-1}
\|\chi_{U_{j+1}(B)}(1-e^{-s^{m}L})^{M}G_{k,r_{B},M}(L)\phi(\tau^{m}L)\chi_{U_{\ell}(B)}(r_{B}^{-mk}L^{M-k}b)\|_{2}ds
$$
and
$$
{\rm II}(\ell,\tau) = \sum_{\nu=1}^{(2M-1)M}\|\chi_{U_{j+1}(B)}(1-e^{-s^{m}L})^{M}e^{-\nu r^{m}L}\phi(\tau^{m}L)
\chi_{U_{\ell}(B)}(r_{B}^{-mM}b)\|_{2}.
$$

We apply Proposition \ref{p2estimate} to conclude that when $\ell \leq j-5$,
\begin{align*}
{\rm I}(\ell,\tau) &\leq C\sum_{k=0}^{M-1}\sup\limits_{s\in [r_{B},\sqrt[m]{2}r_{B}]}
\|\chi_{U_{j+1}(B)}(1-e^{-s^{m}L})^{M}G_{k,r_{B},M}(L)\phi(\tau^{m}L)\chi_{U_{\ell}(B)}(r_{B}^{-mk}L^{M-k}b)\|_{2}\\
&\leq C2^{-j\kappa_0}\left(\frac{r_{B}}{\tau}\right)^{-\kappa_{0}+\frac{n}{2}}\|r_{B}^{-mk}L^{M-k}b\|_{L^{2}(U_{\ell}(B))}
\leq C2^{-\ell\epsilon}2^{-j\kappa_0}\left(\frac{r_{B}}{\tau}\right)^{-\kappa_{0}+\frac{n}{2}}\mu(2^{\ell}B)^{-(\frac{1}{q}-\frac{1}{2})}\\
&\leq C2^{-\ell\epsilon}2^{-j\kappa_0}\left(\frac{r_{B}}{\tau}\right)^{-\kappa_{0}+\frac{n}{2}}2^{(j-\ell)n(\frac{1}{q}-\frac{1}{2})}\mu(2^{j}B)^{-(\frac{1}{q}-\frac{1}{2})},
\end{align*}
and thus
\begin{align}\label{il1}
\sum_{\ell=0}^{j-5}\left(\int_{0}^{r_{B}}\big| {\rm I}(\ell,\tau)\big|^2 \frac{d\tau}{\tau}\right)^{\frac{1}{2}}
 \leq C\sum_{\ell=0}^{j-5}2^{-\ell(\epsilon+(\frac{1}{q}-\frac{1}{2})n)}2^{-j(\kappa_{0}-(\frac{1}{q}-\frac{1}{2})n)}\mu(2^{j}B)^{-(\frac{1}{q}-\frac{1}{2})}
 \leq C2^{-j(\kappa_{0}-(\frac{1}{q}-\frac{1}{2})n)}\mu(2^{j}B)^{-(\frac{1}{q}-\frac{1}{2})}.
\end{align}
Besides, the self-adjoint property of the operator $L$ allows us to apply Proposition \ref{p2estimate} to conclude that when $j\leq \ell-5$,
\begin{align*}
{\rm I}(\ell,\tau) &\leq C\sum_{k=0}^{M-1}\sup\limits_{s\in [r_{B},\sqrt[m]{2}r_{B}]}
\|\chi_{U_{j+1}(B)}(1-e^{-s^{m}L})^{M}G_{k,r_{B},M}(L)\phi(\tau^{m}L)\chi_{U_{\ell}(B)}(r_{B}^{-mk}L^{M-k}b)\|_{2}\\
&\leq C2^{-\ell\kappa_0}\left(\frac{r_{B}}{\tau}\right)^{-\kappa_{0}+\frac{n}{2}}\|r_{B}^{-mk}L^{M-k}b\|_{L^{2}(U_{\ell}(B))}\\
&\leq C2^{-\ell(\kappa_0+\epsilon)}\left(\frac{r_{B}}{\tau}\right)^{-\kappa_{0}+\frac{n}{2}}2^{(j-\ell)n(\frac{1}{q}-\frac{1}{2})}\mu(2^{j}B)^{-(\frac{1}{q}-\frac{1}{2})},
\end{align*}
and therefore
\begin{align}\label{il2}
\sum_{\ell=j+5}^{\infty}\left(\int_{0}^{r_{B}}\big| {\rm I}(\ell,\tau)\big|^2 \frac{d\tau}{\tau}\right)^{\frac{1}{2}}
&\leq C\sum_{\ell=j+5}^{\infty}2^{-\ell(\kappa_0+\epsilon)}2^{(j-\ell)n(\frac{1}{q}-\frac{1}{2})}\mu(2^{j}B)^{-(\frac{1}{q}-\frac{1}{2})}
\left(\int_{0}^{r_{B}}\left(\frac{r_{B}}{\tau}\right)^{-2\kappa_{0}+n}\frac{d\tau}{\tau}\right)^{\frac{1}{2}}\nonumber\\
&\leq C2^{-j(\kappa_{0}+\epsilon)}\mu(2^{j}B)^{-(\frac{1}{q}-\frac{1}{2})}.
\end{align}
Finally, by the $L^{2}$ boundedness of the square function, we have
\begin{eqnarray}\label{il3}
 \sum_{\ell=j-4}^{j+4}\left(\int_{0}^{r_{B}} \big| {\rm I}(\ell,\tau)\big|^2 \frac{d\tau}{\tau}\right)^{\frac{1}{2}}
&\leq & C\sum_{\ell=j-4}^{j+4}\left(\int_{0}^{\infty}\|\phi(\tau^{m}L)\chi_{U_{\ell}(B)}(r_{B}^{-mk}L^{M-k}b)\|_{2}^{2}
\frac{d\tau}{\tau}\right)^{\frac{1}{2}}\nonumber\\
&\leq & C\sum_{\ell=j-4}^{j+4}\|r_{B}^{-mk}L^{M-k}b\|_{L^{2}(U_{\ell}(B))}\nonumber\\
&\leq & C2^{-j\epsilon}\mu(2^{j}B)^{-(\frac{1}{q}-\frac{1}{2})}.
\end{eqnarray}
The term ${\rm II}(\ell,\tau)$ can be handled in a similar way. This, in combination with the estimates
 (\ref{il0}), (\ref{il1}), (\ref{il2}) and (\ref{il3}), imply that for any $j\geq 1$,
\begin{align*}
\|\eta_{j}F\|_{T_{2}^{2}}
\leq C\left(\int_{0}^{r_{B}}\|\chi_{U_{j+1}(B)}\phi(\tau^{m}L)a\|_{2}^{2}\frac{d\tau}{\tau}\right)^{\frac{1}{2}}
\leq C2^{-j\sigma}\mu(2^{j}B)^{-(\frac{1}{q}-\frac{1}{2})}
\end{align*}
for some $\sigma>0$.

Next, we turn to show the estimate (b).
We decompose $M=M_{0}+M_{1}$, where $M_{0}, M_{1}$ are two constants to be chosen large enough
later. Then, we iterate the formula (\ref{iter0}) $M_{0}$ times to conclude that
\begin{eqnarray}\label{jl0}
 \|\chi_{U_{j+1}(B)}\phi(\tau^{m}L)a\|_{2}
&\leq& \sum_{\ell=0}^{\infty}{\rm III}(\ell,\tau)+\sum_{\ell=0}^{\infty}{\rm IV}(\ell,\tau),
\end{eqnarray}
where
$$
{\rm III}(\ell,\tau)=\sum_{k=0}^{M_{0}-1}r_{B}^{-m}\int_{r_{B}}^{\sqrt[m]{2}r_{B}}s^{m-1}
\|\chi_{U_{j+1}(B)}(1-e^{-s^{m}L})^{M_{0}}G_{k,r_{B},M_{0}}(L)L^{M_{1}}\phi(\tau^{m}L)\chi_{U_{\ell}(B)}(r_{B}^{-mk}L^{M_{0}-k}b)\|_{2}ds
$$
and
$$
{\rm IV}(\ell,\tau)=\sum_{\nu=1}^{(2M_{0}-1)M_{0}}\|\chi_{U_{j+1}(B)}(1-e^{-s^{m}L})^{M_{0}}
e^{-\nu r^{m}L}L^{M_{1}}\phi(\tau^{m}L)\chi_{U_{\ell}(B)}(r_{B}^{-mM_{0}}b)\|_{2}.
$$
We apply Proposition \ref{p2estimate} to conclude that when $\ell \leq j-5$,
\begin{eqnarray*}
&&\hspace{-1.5cm}\sup\limits_{s\in [r_{B},\sqrt[m]{2}r_{B}]}\|\chi_{U_{j+1}(B)}(1-e^{-s^{m}L})^{M_{0}}
G_{k,r_{B},M_{0}}(L)L^{M_{1}}\phi(\tau^{m}L)\chi_{U_{\ell}(B)}\|_{2\rightarrow 2}\\
&\leq& C\tau^{-mM_{1}}2^{-j\kappa_0}\left(\frac{r_{B}}{\tau}\right)^{-\kappa_{0}+\frac{n}{2}}\min\{1, (\tau^{-1/m} r_{B})^{mM_{0}-c_{0}}\}
 \leq  C\tau^{-mM_{1}}2^{-j\kappa_0}\left(\frac{r_{B}}{\tau}\right)^{-\kappa_{0}+\frac{n}{2}}
\end{eqnarray*}
for some $ M_{0}> {c_{0}}/{m}$,
which implies that
\begin{align*}
{\rm III}(\ell,\tau)
&\leq C\tau^{-mM_{1}}2^{-j\kappa_0}\left(\frac{r_{B}}{\tau}\right)^{-\kappa_{0}+\frac{n}{2}}\|r_{B}^{-mk}L^{M_{0}-k}b\|_{L^{2}(U_{\ell}(B))}
\leq C2^{-\ell\epsilon}\tau^{-mM_{1}}2^{-j\kappa_0}\left(\frac{r_{B}}{\tau}\right)^{-\kappa_{0}+\frac{n}{2}}r_{B}^{mM_{1}}\mu(2^{\ell}B)^{-(\frac{1}{q}-\frac{1}{2})}\\
&\leq C2^{-\ell\epsilon}\tau^{-mM_{1}}2^{-j\kappa_0}\left(\frac{r_{B}}{\tau}\right)^{-\kappa_{0}+\frac{n}{2}}r_{B}^{mM_{1}}2^{(j-\ell)n(\frac{1}{q}-\frac{1}{2})}\mu(2^{j}B)^{-(\frac{1}{q}-\frac{1}{2})}.
\end{align*}
Hence,
\begin{align}\label{jl1}
\sum_{\ell=0}^{j-5}\left(\int_{r_{B}}^{2^{j+1}r_{B}}\big|{\rm III}(\ell,\tau)\big|^2 \frac{d\tau}{\tau}\right)^{\frac{1}{2}}
&\leq C\sum_{\ell=0}^{j-5}2^{-\ell\epsilon}r_{B}^{mM_{1}}2^{(j-\ell)n(\frac{1}{q}-\frac{1}{2})}\mu(2^{j}B)^{-(\frac{1}{q}-\frac{1}{2})}
2^{-j\kappa_0}\left(\int_{r_{B}}^{2^{j+1}r_{B}}\left(\frac{r_{B}}{\tau}\right)^{-2\kappa_{0}+n}\frac{d\tau}{\tau^{2mM_{1}+1}}\right)^{\frac{1}{2}}\nonumber\\
&\leq C\sum_{\ell=0}^{j-5}2^{-\ell\epsilon}2^{(j-\ell)n(\frac{1}{q}-\frac{1}{2})}\mu(2^{j}B)^{-(\frac{1}{q}-\frac{1}{2})}2^{-j\kappa_{0}}
 \leq C2^{-j(\kappa_{0}-n(\frac{1}{q}-\frac{1}{2}))}\mu(2^{j}B)^{-(\frac{1}{q}-\frac{1}{2})},
\end{align}
where in the next to the last inequality we choose $M_{1}$ sufficient large such that the integral can be bounded by a uniform constant $C>0$.
Similarly,
\begin{align}\label{jl2}
\sum_{\ell=j+5}^{\infty}\left(\int_{r_{B}}^{2^{j+1}r_{B}}\big|{\rm III}(\ell,\tau)\big|^2\frac{d\tau}{\tau}\right)^{\frac{1}{2}}
&\leq C \sum_{\ell=j+5}^{\infty}2^{-\ell\epsilon}r_{B}^{mM_{1}}2^{(j-\ell)n(\frac{1}{q}-\frac{1}{2})}\mu(2^{j}B)^{-(\frac{1}{q}-\frac{1}{2})}
2^{-\ell\kappa_0}\left(\int_{r_{B}}^{2^{j+1}r_{B}}\left(\frac{r_{B}}{\tau}\right)^{-2\kappa_{0}+n}\frac{d\tau}{\tau^{2mM_{1}+1}}\right)^{\frac{1}{2}}\nonumber\\
&\leq C\sum_{\ell=j+5}^{\infty}2^{-\ell(\kappa_{0}+n(\frac{1}{q}-\frac{1}{2})+\epsilon)}2^{jn(\frac{1}{q}-\frac{1}{2})}\mu(2^{j}B)^{-(\frac{1}{q}-\frac{1}{2})}
 \leq C2^{-j(\kappa_{0}+\epsilon)}\mu(2^{j}B)^{-(\frac{1}{q}-\frac{1}{2})}.
\end{align}
Also we have that
\begin{eqnarray}\label{jl3}
 \sum_{\ell=j-4}^{j+4}\left(\int_{r_{B}}^{2^{j+1}r_{B}} \big|{\rm III}(\ell,\tau)\big|^2\frac{d\tau}{\tau}\right)^{\frac{1}{2}}
&\leq&C\sum_{\ell=j-4}^{j+4}r_{B}^{-mM_{1}}\left(\int_{r_{B}}^{2^{j+1}r_{B}}\|(\tau^{m}L)^{M_{1}}
\phi(\tau^{m}L)\chi_{U_{\ell}(B)}(r_{B}^{-mk}L^{M_{0}-k}b)\|_{2}^{2}\frac{d\tau}{\tau}\right)^{\frac{1}{2}}\nonumber\\
&\leq&
\sum_{\ell=j-4}^{j+4}r_{B}^{-mM_{1}}\|r_{B}^{-mk}L^{M_{0}-k}b\|_{L^{2}(U_{\ell}(B))}\nonumber\\
&\leq &C2^{-j\epsilon}\mu(2^{j}B)^{-(\frac{1}{q}-\frac{1}{2})}.\nonumber
\end{eqnarray}
The term ${\rm IV}(\ell,\tau)$ can be handled in a similar way. This, in combination with the estimates
 (\ref{jl0}), (\ref{jl1}), (\ref{jl2}) and (\ref{jl3}), implies that for any $j\geq 1$,
\begin{align*}
\|\eta_{j}^{\prime}F\|_{T_{2}^{2}}
\leq C\left(\int_{r_{B}}^{2^{j+1}r_{B}}\|\chi_{U_{j+1}(B)}\phi(\tau^{m}L)a\|_{2}^{2}\frac{d\tau}{\tau}\right)^{\frac{1}{2}}
\leq C2^{-j\sigma}\mu(2^{j}B)^{-(\frac{1}{q}-\frac{1}{2})}
\end{align*}
for some $\sigma>0$.

Finally, it remains to show the estimate (c).
Similar to the proof of the estimate (b), we decompose $M=M_{0}+M_{1}$, where $M_{0}, M_{1}$
are two constants to be chosen large enough later. Then, we iterate the formula (\ref{iter0}) $M_{0}$ times to conclude that
\begin{eqnarray}\label{kl0}
 \|\chi_{2^{j}B}\phi(\tau^{m}L)a\|_{2}
&\leq& \sum_{\ell=0}^{\infty} {\rm V}(\ell,\tau) +\sum_{\ell=0}^{\infty}{\rm VI}(\ell,\tau),
\end{eqnarray}
where
$$
{\rm V}(\ell,\tau)=\sum_{k=0}^{M_{0}-1}r_{B}^{-m}\int_{r_{B}}^{\sqrt[m]{2}r_{B}}s^{m-1}
\|\chi_{2^{j}B}(1-e^{-s^{m}L})^{M_{0}}G_{k,r_{B},M_{0}}(L)L^{M_{1}}\phi(\tau^{m}L)\chi_{U_{\ell}(B)}(r_{B}^{-mk}L^{M_{0}-k}b)\|_{2}ds
$$
and
$$
{\rm VI}(\ell,\tau)= \sum_{\nu=1}^{(2M_{0}-1)M_{0}}\|\chi_{2^{j}B}(1-e^{-s^{m}L})^{M_{0}}e^{-\nu r^{m}L}L^{M_{1}}
\phi(\tau^{m}L)\chi_{U_{\ell}(B)}(r_{B}^{-mM_{0}}b)\|_{2}.
$$
It follows from the $L^{2}$ boundedness of square function that when $\ell\leq j+4$,
\begin{align*}
 \left(\int_{2^{j}r_{B}}^{2^{j+1}r_{B}}\big| {\rm V}(\ell,\tau)\big|^2\frac{d\tau}{\tau}\right)^{\frac{1}{2}}
\leq &C(2^{j}r_{B})^{-mM_{1}}\|r_{B}^{-mk}L^{M_{0}-k}\|_{L^{2}(U_{\ell}(B))}\\
\leq &C2^{-\ell\epsilon}2^{(j-\ell)n(\frac{1}{q}-\frac{1}{2})}2^{-jmM_{1}}\mu(2^{j}B)^{-(\frac{1}{q}-\frac{1}{2})}.
\end{align*}
Therefore,
\begin{align}\label{kl1}
\sum_{\ell=0}^{j+4}\left(\int_{2^{j}r_{B}}^{2^{j+1}r_{B}}\big| {\rm V}(\ell,\tau)\big|^2\frac{d\tau}{\tau}\right)^{\frac{1}{2}}
&\leq C\sum_{\ell=0}^{j+4}2^{-\ell\epsilon}2^{(j-\ell)n(\frac{1}{q}-\frac{1}{2})}2^{-jmM_{1}}\mu(2^{j}B)^{-(\frac{1}{q}-\frac{1}{2})}\nonumber\\
&\leq C2^{-j(mM_{1}-n(\frac{1}{q}-\frac{1}{2}))}\mu(2^{j}B)^{-(\frac{1}{q}-\frac{1}{2})}.
\end{align}
Besides, if we choose $M_{0}$ sufficient large, then the self-adjoint property of the operator $L$ allows us to apply
 Proposition \ref{p2estimate} to conclude that when $\ell\geq j+5$,
\begin{align*}
 \left(\int_{2^{j}r_{B}}^{2^{j+1}r_{B}} \big| {\rm V}(\ell,\tau)\big|^2\frac{d\tau}{\tau}\right)^{\frac{1}{2}}
\leq &C(2^{j}r_{B})^{-mM_{1}}2^{-\ell\kappa_0}\left(\int_{2^{j}r_{B}}^{2^{j+1}r_{B}}\left(\frac{r_{B}}{\tau}\right)^{-2\kappa_{0}+n}
\frac{d\tau}{\tau}\right)^{\frac{1}{2}}\|r_{B}^{-mk}L^{M_{0}-k}b\|_{L^{2}(U_{\ell}(B))}\\
\leq
&C2^{-\ell(\kappa_0+(\frac{1}{q}-\frac{1}{2})n+\epsilon)}2^{j(\kappa_0+(\frac{1}{q}-\frac{1}{2})n-\frac{n}{2})}2^{-jmM_{1}}\mu(2^{j}B)^{-(\frac{1}{q}-\frac{1}{2})}.
\end{align*}
Thus,
\begin{align}\label{kl2}
\sum_{\ell=j+5}^{\infty}\left(\int_{2^{j}r_{B}}^{2^{j+1}r_{B}}\big| {\rm V}(\ell,\tau)\big|^2\frac{d\tau}{\tau}\right)^{\frac{1}{2}}
&\leq C\sum_{\ell=j+5}^{\infty}2^{-\ell(\kappa_0+(\frac{1}{q}-\frac{1}{2})n+\epsilon)}2^{j(\kappa_0+(\frac{1}{q}-\frac{1}{2})n-\frac{n}{2})}2^{-jmM_{1}}\mu(2^{j}B)^{-(\frac{1}{q}-\frac{1}{2})}\nonumber\\
&\leq C2^{-j(mM_{1}+\frac{n}{2}+\epsilon)}\mu(2^{j}B)^{-(\frac{1}{q}-\frac{1}{2})}.
\end{align}
The term ${\rm VI}(\ell,\tau)$ can be handled in a similar way. This, in combination with the estimates (\ref{kl0}),
(\ref{kl1}) and (\ref{kl2}), implies that if we choose $M_{1}>\frac{n}{m}(\frac{1}{q}-\frac{1}{2})$, then for any $j\geq 1$,
\begin{align*}
\|\eta_{j}^{\prime\prime}F\|_{T_{2}^{2}}
\leq C\left(\int_{2^{j}r_{B}}^{2^{j+1}r_{B}}\|\chi_{2^{j}B}\phi(\tau^{m}L)a\|_{2}^{2}\frac{d\tau}{\tau}\right)^{\frac{1}{2}}
\leq C2^{-j\sigma}\mu(2^{j}B)^{-(\frac{1}{q}-\frac{1}{2})}
\end{align*}
for some $\sigma>0$. This finishes the proof of the estimate (c) and then the \textbf{Step I}.

\medskip

\noindent
\textbf{Step II}: $(H_{L}^{q}(X)\cap \mathcal{H}^{2}(X))\subset \mathbb{H}_{L,mol,M,\epsilon}^{q}(X)$.

Let $f\in H_{L}^{q}(X)\cap \mathcal{H}^{2}(X)$, we will establish a molecular $(q,2,M,\epsilon)$-representation for $f$.
 To this end, we modify the argument in \cite{DL} and set $F(x,\tau)=\phi(\tau^{m}L)f(x)$. Then the definition of
  $H_{L}^{q}(X)$ and the $L^{2}$ boundedness of square function imply that $F\in T_{2}^{q}(X)\cap T_{2}^{2}(X)$.
   Therefore, it follows from the Lemma \ref{tentde} that
\begin{align*}
F=\sum_{j}\lambda_{j}A_{j},
\end{align*}
where each $A_{j}$ is a $T_{2}^{q}(X)$-atom, the sum converges in both $T_{2}^{q}(X)$ and $T_{2}^{2}(X)$, and
\begin{align*}
\left(\sum_{j}|\lambda_{j}|^q\right)^{1/q}\leq C\|F\|_{T_{2}^{q}(X)}\leq C\|f\|_{H_{L}^{q}(X)}.
\end{align*}

Besides, by $L^{2}$-functional calculus, we have
\begin{align}\label{repres}
 f(x)=c\int_{0}^{\infty}\phi(\tau^{m}L)\phi(\tau^{m}L)f(x)\frac{d\tau}{\tau}=
c\pi_{L}(F)(x)=c\sum_{j}\lambda_{j}\pi_{L}(A_{j})(x),
\end{align}
where the last sum converges in $L^{2}(X)$ (see \cite[Lemma 3.22]{HLMMY}). Lemma \ref{tentatom} implies that up
to a harmless constant $C>0$, each $\pi_{L}(A_{j})$ is a $(q,2,M,\epsilon)$-molecule associated to $B$ for some
 $\epsilon>0$, which indicates that (\ref{repres}) gives a molecular $(q,2,M,\epsilon)$-representation of $f$
  so that $f\in$ $\mathbb{H}_{L,mol,M,\epsilon}^{q}(X)$.
This finishes the proof of Lemma \ref{moleculardecomposition}.
\end{proof}

\subsection{Interpolation}
The goal of this subsection is to establish the theory of complex interpolation for Hardy spaces.

\begin{proof}[Proof of Lemma \ref{cominter}]
For any $f\in H_{L}^{p}$, $\frac{2n}{2\kappa_0+n}< p <\infty$, we consider
\begin{align*}
Q_{\tau,L}f(x,\tau):=\phi(\tau^{m}L)f(x),\ \ \tau>0,\ x\in X.
\end{align*}
Then by the definition of $H_{L}^{p}(X)$, $Q_{\tau,L}$ embeds the Hardy space $H_{L}^{p}(X)$ isometrically into
the tent space $T_{2}^{p}(X)$ for $\frac{2n}{2\kappa_0+n}< p<\infty$.
Besides, from Lemma \ref{tentatom} we can easily see that the condition (${\rm PEV}_{2,2}^{\kappa,m}$) implies that
for any $\frac{2n}{2\kappa_0+n}< p<\infty$, $\pi_{L}$ is bounded from $T_{2}^{p}(X)$ to $H_{L}^{p}(X)$.
By the $L^{2}$-functional calculus, for any $f\in L^{2}(X)$, there exists a constant $c>0$ such that the following
Calder\'{o}n reproducing formula holds:
\begin{align*}
f(x)=c\pi_{L}(Q_{\tau,L}f)(x).
\end{align*}
Then Lemma \ref{cominter} can be shown by following a similar outline in \cite{HLMMY}.
\end{proof}

\begin{lemma}\label{boundtest}
Let $1\leq p_{0}<2$. Suppose that $T$ is a sublinear operator bounded on $L^{2}(X)$, and let $\{A_{r}\}_{r>0}$
be a family of linear operators acting on $L^{2}(X)$. Assume that there exists a constant $N>\frac{n}{2}$ such that for $j\geq 6$,
\begin{align}\label{test01}
\|\chi_{U_{j}(B)}T(I-A_{r_{B}})\chi_{B}f\|_{2}\leq C2^{-jN}\mu(B)^{-\sigma_{p_{0}}}\|f\|_{p_{0}}
\end{align}
and for $j\geq 0$,
\begin{align}\label{test02}
\|\chi_{U_{j}(B)}A_{r_{B}}\chi_{B}f\|_{2}\leq C2^{-jN}\mu(B)^{-\sigma_{p_{0}}}\|f\|_{p_{0}}
\end{align}
for all ball $B$ with $r_{B}$ the radius of $B$ and all $f$ supported in $B$. Then for any $p_{0}<p\leq2$, $T$ is bounded on $L^{p}$.
\end{lemma}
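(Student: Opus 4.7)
The plan is to prove that $T$ is of weak type $(p_0,p_0)$, and then apply the Marcinkiewicz interpolation theorem between this endpoint and the hypothesized $L^2$-boundedness to deduce the stated $L^p$-estimate for $p_0<p\le 2$. The mechanism for the weak-type bound is a Calder\'on--Zygmund decomposition in the Blunck--Kunstmann style, in which the approximating family $\{A_{r}\}$ takes the place of the resolvent/semigroup smoothing that appears in the classical formulation.

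Fix $\alpha>0$ and perform the Calder\'on--Zygmund decomposition of $f\in L^{p_0}(X)$ at level $\alpha^{p_0}$ on the space of homogeneous type: $f = g + \sum_i b_i$ with $\|g\|_\infty \le C\alpha$, each $b_i$ supported in a ball $B_i$ with $\|b_i\|_{p_0}\le C\alpha\,\mu(B_i)^{1/p_0}$, the balls $\{B_i\}$ having bounded overlap, and $\sum_i\mu(B_i)\le C\alpha^{-p_0}\|f\|_{p_0}^{p_0}$. Set $\Omega:=\bigcup_i 2^6 B_i$, so that $\mu(\Omega)\le C\alpha^{-p_0}\|f\|_{p_0}^{p_0}$. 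The good part satisfies $\|g\|_2^2 \le \|g\|_\infty^{2-p_0}\|g\|_{p_0}^{p_0}\le C\alpha^{2-p_0}\|f\|_{p_0}^{p_0}$, so the $L^2$-boundedness of $T$ combined with Chebyshev's inequality yields $\mu\{|Tg|>\alpha\}\le C\alpha^{-p_0}\|f\|_{p_0}^{p_0}$. For the bad part, split $\sum_i b_i = b_1 + b_2$ with $b_1 = \sum_i A_{r_{B_i}}b_i$ and $b_2 = \sum_i (I - A_{r_{B_i}})b_i$.

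I would control $\|b_1\|_2$ by duality: for $\phi\in L^2(X)$ with $\|\phi\|_2=1$, hypothesis \eqref{test02} (together with the uniform $L^2$-boundedness of $A_r$, which covers the $j=0$ annulus) yields $\|\chi_{U_j(B_i)}A_{r_{B_i}}b_i\|_2 \le C 2^{-jN}\alpha\,\mu(B_i)^{1/2}$, while a doubling argument shows $\|\chi_{U_j(B_i)}\phi\|_2^2 \le C 2^{jn}\mu(B_i)\mathcal{M}(|\phi|^2)(x)$ for any $x\in B_i$, where $\mathcal{M}$ denotes the Hardy--Littlewood maximal operator. Multiplying these two estimates, summing over $j\ge 0$ (which converges thanks to $N>n/2$), and averaging over $B_i$ produces $|\langle A_{r_{B_i}}b_i,\phi\rangle|\le C\alpha\int_{B_i}\mathcal{M}(|\phi|^2)^{1/2}d\mu$. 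Summing over $i$ with bounded overlap and invoking Kolmogorov's inequality for the weak-$L^1$ function $\mathcal{M}(|\phi|^2)$ on the finite-measure set $\Omega$ gives $\int_\Omega \mathcal{M}(|\phi|^2)^{1/2}\,d\mu\le C\mu(\Omega)^{1/2}\|\phi\|_2$. Hence $\|b_1\|_2\le C\alpha\mu(\Omega)^{1/2}$, and since $T$ is $L^2$-bounded, Chebyshev gives $\mu\{|Tb_1|>\alpha\}\le C\alpha^{-p_0}\|f\|_{p_0}^{p_0}$. The contribution of $Tb_2$ off $\Omega$ is handled by an identical duality argument, this time with $\phi$ supported in $X\setminus\Omega$: the support condition forces only $j\ge 6$ terms to appear, and \eqref{test01} provides precisely the $L^{p_0}\!\to\!L^2$ off-diagonal decay for $T(I-A_{r_{B_i}})b_i$ on $U_j(B_i)$ needed to rerun the same maximal-function/Kolmogorov bookkeeping. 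Assembling the contributions from $g$, $\Omega$, $Tb_1$, and $Tb_2$ gives $\mu\{|Tf|>3\alpha\}\le C\alpha^{-p_0}\|f\|_{p_0}^{p_0}$, which is the weak $(p_0,p_0)$ bound.

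The main technical obstacle will be executing the duality step cleanly, reconciling the $L^{p_0}\!\to\!L^2$ form of the off-diagonal hypotheses with an arbitrary $L^2$ test function $\phi$ whose mass is spread over many Whitney balls simultaneously. The crucial device is the localization of $\phi$ via $\mathcal{M}(|\phi|^2)^{1/2}$, coupled with Kolmogorov's inequality to trade the weak $(1,1)$ bound on $\mathcal{M}$ for an integrable majorant on $\Omega$; the hypothesis $N>n/2$ is precisely what makes the resulting volume-weighted geometric series in $j$ converge and drives the whole argument.
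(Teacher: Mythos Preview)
Your approach is correct and is exactly what the paper does: the paper's proof consists of the single sentence that this is a slight modification of Theorem~2.1 in Auscher's memoir \cite{Aus}, and the Calder\'on--Zygmund argument you outline---splitting the bad part via $A_{r_{B_i}}$, controlling $\sum_i A_{r_{B_i}}b_i$ in $L^2$ by the duality/maximal-function/Kolmogorov trick, and handling $T\sum_i(I-A_{r_{B_i}})b_i$ off the exceptional set via \eqref{test01}---is precisely that theorem. The one minor imprecision is your treatment of the $j=0$ annulus: since $b_i$ need only lie in $L^{p_0}$, uniform $L^2$-boundedness of $A_r$ is not the right tool there, and what you actually need is the on-diagonal $L^{p_0}\!\to\!L^2$ bound $\|\chi_B A_{r_B}\chi_B f\|_2\le C\mu(B)^{-\sigma_{p_0}}\|f\|_{p_0}$ (in Auscher's formulation this is folded into the $j=1$ hypothesis because his first annulus is the full ball $4B$, and in the paper's application it follows from Lemma~\ref{heatbound}).
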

\begin{proof}
The proof is a slight modification of Theorem 2.1 in \cite{Aus}. We omit the details and leave it to the readers.
\end{proof}

Next, we give the proof of Lemma \ref{Hp}.

\begin{proof}[Proof of Lemma \ref{Hp}]
We first show that $L^{p}(X)\subset H_{L}^{p}(X)$,
 or equivalently, for any $p\in (p_{0},2]$, $S_{L,\phi}$ is bounded on $L^{p}(X)$. By Lemma \ref{boundtest}, it is enough to
  verify that there exists a sufficient large constant $M$ such that the operator $T=S_{L,\phi}$ and $A_{r_{B}}=I-(I-e^{-r_{B}^{m}L})^{M}$
  satisfy the estimates (\ref{test01}) and (\ref{test02}).
Indeed,
\begin{eqnarray*}
&&\hspace{-1.2cm}\|\chi_{U_{j}(B)}S_{L,\phi}(I-e^{-r_{B}^{m}L})^{M}\chi_{B}f\|_{2}\\
&\leq&\left(\int_{U_{j}(B)}\int_{0}^{2^{m(j-2)r_{B}^{m}}}\int_{d(x,y)<\tau^{1/m}}|\phi(\tau L)(I-e^{-r_{B}^{m}L})^{M}
\chi_{B}f(y)|^{2}\frac{d\mu(y)d\tau d\mu(x)}{V(x,\tau^{1/m})\tau}\right)^{\frac{1}{2}}\\
&+&\left(\int_{U_{j}(B)}\int_{2^{m(j-2)r_{B}^{m}}}^{\infty}\int_{d(x,y)<\tau^{1/m}}|\phi(\tau L)(I-e^{-r_{B}^{m}L})^{M}
\chi_{B}f(y)|^{2}\frac{d\mu(y)d\tau d\mu(x)}{V(x,\tau^{1/m})\tau}\right)^{\frac{1}{2}}\\
&=:& {\rm I}+ {\rm II}.
\end{eqnarray*}
Note that if $x\in U_{j}(B)$, $\tau<2^{m(j-2)}r_{B}^{m}$ and $d(x,y)<\tau^{1/m}$, then $y\in U_{j}^{\prime}(B)$, where
\begin{align*}
U_{j}^{\prime}(B):=U_{j-1}(B)\cup U_{j}(B)\cup U_{j+1}(B).
\end{align*}

Hence,
it follows from the doubling condition (\ref{doubling}) and Proposition \ref{p2estimate} that there exist constants $C,c>0$ such that
\begin{align*}
{\rm I}&\leq C\left(\int_{0}^{2^{m(j-2)r_{B}^{m}}}\|\chi_{U_{j}^{\prime}(B)}\phi(\tau L)(I-e^{-r_{B}^{m}L})^{M}\chi_{B}f\|_{2}^{2}
\frac{d\tau}{\tau}\right)^{\frac{1}{2}}\\
&\leq C2^{-j\kappa_{0}}\mu(B)^{-\sigma_{p_{0}}}\left(\int_{0}^{2^{m(j-2)r_{B}^{m}}}
(\tau^{-1/m}r_{B})^{-2\kappa_{0}+n}{\rm min}\{1,(\tau^{-1/m}r_{B})^{2mM-c_{0}}\}\frac{d\tau}{\tau}\right)^{\frac{1}{2}}\|f\|_{p_{0}}\\
&\leq C2^{-j\kappa_{0}}\mu(B)^{-\sigma_{p_{0}}}\|f\|_{p_{0}}.
\end{align*}
Next, we apply Lemma \ref{global00} and the doubling condition (\ref{doubling}) to obtain that
\begin{align*}
{\rm II}&\leq\left(\int_{2^{m(j-2)r_{B}^{m}}}^{\infty}\|\phi(\tau L)(I-e^{-r_{B}^{m}L})^{M}\chi_{B}f(y)\|_{2}^{2}\frac{d\tau}{\tau}\right)^{\frac{1}{2}}\\
&\leq C\left(\int_{2^{m(j-2)r_{B}^{m}}}^{\infty}{\rm min}\{1,(\tau^{-1/m}r_{B})^{2mM}\}
\|V_{\tau^{1/m}}^{-\sigma_{p_{0}}}\chi_Bf\|_{p_{0}}^{2}\frac{d\tau}{\tau}\right)^{\frac{1}{2}}\\
&\leq C\mu(B)^{-\sigma_{p_{0}}}\left(\int_{2^{m(j-2)r_{B}^{m}}}^{\infty}{\rm min}\{1,(\tau^{-1/m}r_{B})^{2mM}\}
\frac{d\tau}{\tau}\right)^{\frac{1}{2}}\|f\|_{p_{0}}\\
&\leq C2^{-jmM}\mu(B)^{-\sigma_{p_{0}}}\|f\|_{p_{0}}.
\end{align*}
Hence, (\ref{test01}) is proved after we choose $M>\frac{n}{2m}$.

Besides, observe that $[I-(I-e^{-r_{B}^{m}L})^{M}]$ is a finite combination of the terms $e^{-kr_{B}^{m}L}$, $k=1,\ldots, M$. Besides, similarly to the argument in \eqref{sssi}, we conclude that for any $j> 2$, the semigroup $e^{-kr_{B}^{m}L}$
  satisfies the following estimate
\begin{align*}
\|\chi_{U_{j}(B)}e^{-kr_{B}^{m}L}\chi_{B}f\|_{2}
&\leq C2^{-j\kappa_0}\mu(B)^{-\sigma_{p_{0}}}\|f\|_{p_{0}}.
\end{align*}
This, together with the Lemma \ref{heatbound}, shows (\ref{test02}).

Next, by the same argument as in the proof of \cite[Theorem 4.19]{Uhl}, our proof can be reduced to showing the $L^{p}$ boundedness of square function $G_{L,\phi}$, where $p\in(2,p_{0}^{\prime})$ and $G_{L,\phi}$ is defined by
$$G_{L,\phi}f(x):=\left(\int_{0}^{\infty}|\phi(tL)f(x)|^{2}\frac{dt}{t}\right)^{1/2},$$
while this boundedness can be obtained by verifying the condition in \cite[Theorem 2.2]{Aus} as verifying the estimates (\ref{test01}) and (\ref{test02}) (see also \cite[p.78]{Aus}). This ends the proof of Lemma \ref{Hp}.
\end{proof}
\subsection{Spectral multipliers theorem on the Hardy space $H_{L}^{q}(X)$ for $\frac{2n}{2\kappa_0+n}<q\leq1$}
Under the assumption that the operator $L$ satisfies the Gaussian upper bounds $({\rm GE_{m}})$, the spectral multipliers
theorem on the Hardy space $H_{L}^{1}(X)$ was shown in \cite{DY3}. Now, with the help of Proposition \ref{p2estimate} and
Lemma $\ref{moleculardecomposition}$, we can extend this result under a weaker assumption that $L$ satisfies the inequality
 (${\rm PEV}_{2,2}^{\kappa,m}$). Such a result is a helpful tool to obtain the boundedness on $L^{p}$ for Schr\"{o}dinger groups.
\begin{theorem}\label{spectralmultiplier}
Given $\frac{2n}{2\kappa_0+n}<q\leq1$. Suppose that $L$ satisfies the estimate (${\rm PEV}_{2,2}^{\kappa,m}$) for some $m>0$ and $\kappa>\kappa_{0}:= \big[\frac{n}{2}\big]+1$.
Assume in addition that $F$ is an even bounded Borel function such that ${\rm sup}_{R>0}\|\eta\delta_{R}F\|_{C^{\kappa_{0}+1}}<\infty$
and some nonzero cutoff function $\eta\in C_{c}^{\infty}(\mathbb{R}_{+})$. Then the operator $F(L)$ is bounded on $H_{L}^{q}(X)$. More precisely,
\begin{align*}
\|F(L)\|_{H_{L}^{q}(X)\rightarrow H_{L}^{q}(X)}\leq C\left(\sup\limits_{R>0}\|\eta\delta_{R}F\|_{C^{\kappa_{0}+1}}+F(0)\right).
\end{align*}
\end{theorem}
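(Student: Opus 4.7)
The plan is to combine the molecular decomposition of $H_L^1(X)$ (Lemma~\ref{moleculardecomposition}) with the compactly supported off-diagonal bound of Proposition~\ref{p2estimate}, following the tent-space scheme used in the proof of Proposition~\ref{tool}, but now without the oscillatory factor $e^{itL}$ (and therefore without the loss $(1+\lambda|t|)^{\kappa_0}$). By Lemma~\ref{moleculardecomposition} it suffices to show that for every $(1,2,M,\epsilon)$-molecule $a = L^M b$ associated with a ball $B = B(x_B, r_B)$,
\[
\|F(L)a\|_{H_L^1(X)} \leq C\bigl(\sup_{R>0}\|\eta \delta_R F\|_{C^{\kappa_0+1}} + |F(0)|\bigr).
\]
Writing $F = F(0) + (F-F(0))$, the constant contribution produces $F(0)\cdot a$, whose $H_L^1$ norm is $\leq C|F(0)|$ since molecules have uniformly bounded $H_L^1$ norm by Lemma~\ref{moleculardecomposition}; the task thus reduces to controlling $\|S_{L,\phi}(F(L)a)\|_{L^1(X)}$ for an $F$ satisfying the $C^{\kappa_0+1}$ hypothesis and $F(0) = 0$.

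Repeating the expansion in the proof of Proposition~\ref{tool}, I would apply the iteration identity~(\ref{iterate}) to rewrite $\phi(\tau L) F(L) a$ as a finite combination of operators of the form $(I-e^{-s^m L})^M G_{\ell,r_B,M}(L)\,\phi(\tau L) F(L)$ acting on $\chi_{U_j(B)}(r_B^{-m\ell}L^{M-\ell}b)$, and then split the tent integral in $x$ into a near part (handled by the $L^2$ boundedness of the square function, the $L^\infty$ bound $\|F\|_\infty \lesssim \sup_R\|\eta\delta_R F\|_{C^{\kappa_0+1}}$, and the molecular estimates on $b$) and far annuli $U_{\nu+j}(B)$. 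The far-annuli contribution is controlled by Proposition~\ref{p2estimate} applied to the compactly supported symbol
\[
F_{\tau,s,\ell}(\lambda) := \phi(\tau\lambda)F(\lambda)G_{\ell,r_B,M}(\lambda)(1-e^{-s^m\lambda})^M,
\]
which is supported in $[1/(4\tau), 1/\tau]$; with $\lambda = 1/\tau$ in Proposition~\ref{p2estimate}, the relevant rescaled norm is bounded by
\[
\|\delta_{\tau^{-1}} F_{\tau,s,\ell}\|_{C^{\kappa_0+1}} \leq C\sup_{R>0}\|\eta\delta_R F\|_{C^{\kappa_0+1}}\cdot \min\{1, (\tau^{-1/m}r_B)^{mM-c_0}\},
\]
where the last factor comes from the cutoff $(1-e^{-s^m\cdot})^M$ evaluated on the support of $\phi$. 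Combined with the geometric factor $2^{-j\sigma}$ supplied by Proposition~\ref{p2estimate}, this gives integrability in $\tau$ and summability in $j,\ell$, yielding the required $L^1$ bound on the square function.

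The main technical obstacle will be the careful bookkeeping of the two scales $r_B$ and $\tau^{1/m}$: the $\tau$-integral has to be split into the regimes $\tau \leq r_B^m$ and $\tau > r_B^m$, and within each regime the tent condition $d(x,y) < \tau^{1/m}$ must be matched against the annular decomposition $U_{\nu+j}(B)$ through a choice of integer $\nu_0^{\pm}$ analogous to~(\ref{nu1}) and~(\ref{nu2}), together with the quantity $r = \min\{r_B, \lambda^{-1/m}\}$ appearing in Proposition~\ref{p2estimate}. Once these regimes are identified, the arithmetic is essentially the $t = 0$ specialization of that already carried out in the proof of Proposition~\ref{tool}, and no additional ideas are needed beyond exploiting the uniform smoothness bound $\sup_{R>0}\|\eta\delta_R F\|_{C^{\kappa_0+1}} < \infty$ at every dyadic spectral scale.
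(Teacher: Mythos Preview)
Your proposal is correct and follows essentially the same route as the paper: reduce via Lemma~\ref{moleculardecomposition} to a uniform square-function bound on molecules, subtract $F(0)$, apply the iteration identity~(\ref{iterate}), split into near and far parts, and control the far part with Proposition~\ref{p2estimate} using the uniform $C^{\kappa_0+1}$ bound on $\eta\delta_R F$. The only minor deviation is that the paper gets by with a single threshold $\nu_0$ (equation~(\ref{nu0})) rather than the pair $\nu_0^\pm$ you anticipate, since without the oscillatory factor $e^{itL}$ there is no $(1+|t|)$ weight to absorb and the frequency splitting $\ell>1/m$ versus $\ell\leq 1/m$ from Proposition~\ref{tool} becomes unnecessary.
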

\begin{proof}
We will modify the proof of Proposition \ref{tool} and use the same notation as before (except that we will use $F(\lambda)$ to denote an even bounded Borel function instead of $(I+\lambda)^{-n(\frac{1}{q}-\frac{1}{2})}$) to show this theorem.

It suffices to show that
\begin{align}\label{goalhardy0}
\left\|\left(\int_{0}^{+\infty}\int_{d(x,y)<\tau^{1/m}}|\phi(\tau L)F(L)a(y)|^{2}\frac{d\mu(y)}{V(x,\tau^{1/m})}
\frac{d\tau}{\tau}\right)^{\frac{1}{2}}\right\|_{L^{q}(X)}\leq C\left(\sup\limits_{R>0}\|\eta\delta_{R}F\|_{C^{\kappa_{0}+1}}+F(0)\right).
\end{align}
We may assume in the sequel that $F(0)=0$. Otherwise, we may replace $F$ by $F-F(0)$.

By the formula (\ref{iterate}), the theorem can be reduced to showing that for $k=0,1,\cdots,M$,
\begin{align*}
\left\|\left(\int_{0}^{\infty}\int_{d(x,y)<\tau^{1/m}}|E_{k}(y)|^{2}\frac{d\mu(y)}{V(x,\tau^{1/m})}\frac{d\tau}{\tau}\right)^{\frac{1}{2}}\right\|_{L^{q}}
\leq C\sup\limits_{R>0}\|\phi\delta_{R}F\|_{C^{\kappa_{0}+1}}.
\end{align*}

\medskip

\noindent
{\bf Case 1.} \ $k=0, 1, \cdots, M-1$. \ In this case, we see that
 \begin{eqnarray}\label{e1}
&&\hspace{-1.2cm}\left\|\left(\int_0^{\infty}\!\!\!\!\int_{\substack{  d(x,y)<\tau^{1/m}}}
|E_k(y)|^2 {d\mu(y)\over V(x,\tau^{1/m})}{d\tau\over \tau}\right)^{1/2}\right\|_{L^{q}}\nonumber\\
&\leq& C\left(\sum_{j\geq 0}\sup_{s\in [r_{B},\sqrt[m] 2 r_{B}]}\left\|\left(\int_0^{\infty}\!\!\!\!\int_{\substack{  d(x,y)<\tau^{1/m}}}
|F_{\tau,s}(L)\chi_{U_{j}(B)} (r_{B}^{-mk}L^{M-k} b)(y)|^2 {d\mu(y)\over V(x,\tau^{1/m})}{d\tau\over \tau}\right)^{1/2}\right\|_{L^{q}}^q\right)^{1/q}\nonumber\\
&=:& C\left(\sum_{j\geq 0}\sup_{s\in [r_{B},\sqrt[m] 2 r_{B}]} \|E^{\prime}(k, j, s)\|_{L^{q}(X)}^q\right)^{1/q},
\end{eqnarray}
where
$$
E^{\prime}(k, j, s)=\left(\int_0^{\infty}\!\!\!\!\int_{\substack{  d(x,y)<\tau^{1/m}}}
|F_{\tau,s}(L)\chi_{U_{j}(B)} (r_{B}^{-mk}L^{M-k} b)(y)|^2 {d\mu(y)\over V(x,\tau^{1/m})}{d\tau\over \tau}\right)^{1/2}.
$$

Let us estimate the term $\|E^{\prime}(k, j, s)\|_{L^{q}(X)}.$ To begin with, let $\psi\in C_c^{\infty}(\mathbb{R})$ such
that ${\rm supp}\  \psi \subseteq ({1/8}, 2)$ and supp$\psi=1$ on $({1/4}, 1)$.
Noting that  $\|G_{k,r_{B},M}\|_{L^\infty}\leq C$,
we apply the estimate  (\ref{molecule}), the $L^{2}$-boundedness of the square function  and  the doubling condition (\ref{doubling}) to see that
\begin{eqnarray} \label{eddd}
&&\hspace{-1.2cm}\|E^{\prime}(k, j, s)\|_{L^{q}(64\cdot2^{j} B)}\nonumber\\
&\leq& \left\|\left(\int_0^{\infty}\!\!\!\!\int_{\substack{  d(x,y)<\tau^{1/m}}}
|F_{\tau,s}(L)\chi_{U_{j}(B)} (r_{B}^{-mk}L^{M-k} b)(y)|^2 {d\mu(y)\over V(x,\tau^{1/m})}{d\tau\over \tau}\right)^{1/2}
\right\|_{L^2(64\cdot2^{j} B)} \mu(64\cdot2^{j} B)^{\frac{1}{q}-\frac{1}{2}}\nonumber\\
&\leq&C\left(\int_{0}^{\infty}\|\psi_{\tau}(L)F_{\tau,s}(L)\chi_{U_{j}(B)}(r_{B}^{-mk}L^{M-k}b)\|_{2}^{2}
\frac{d\tau}{\tau}\right)^{\frac{1}{2}}\mu(64\cdot2^{j} B)^{\frac{1}{q}-\frac{1}{2}}\nonumber\\
&\leq& C  \left\|r_{B}^{-mk}L^{M-k}b\right\|_{L^{2}(U_{j}(B))}\mu(64\cdot2^{j} B)^{\frac{1}{q}-\frac{1}{2}}\sup\limits_{R>0}\|\phi\delta_{R}F\|_{L^\infty}\nonumber\\
&\leq& C 2^{-j \epsilon}\sup\limits_{R>0}\|\phi\delta_{R}F\|_{L^\infty}.
\end{eqnarray}

Next we show that for some $\varepsilon'>0$,
\begin{align*}
\|E^{\prime}(k, j, s)\|_{L^{q}((64\cdot2^{j} B)^c)}   \leq  C2^{-j \varepsilon'}\sup\limits_{R>0}\|\phi\delta_{R}F\|_{C^{\kappa_{0}+1}}.
\end{align*}
To begin with, we have a decomposition according to the frequency,
\begin{align}
E^{\prime}(k, j, s)=&\left(\int_0^{\infty}\!\!\!\!\int_{\substack{  d(x,y)<\tau^{1/m}}}
|F_{\tau,s}(L)\chi_{U_{j}(B)} (r_{B}^{-mk}L^{M-k} b)(y)|^2 {d\mu(y)\over V(x,\tau^{1/m})}{d\tau\over \tau}\right)^{1/2}\nonumber\\
\leq& \sum_{\ell\in\ZZ}\left(\int_{2^{-\ell}}^{2^{-\ell+1}}\!\!\!\!\int_{\substack{  d(x,y)<\tau^{1/m}}}
|F_{\tau,s}(L)\chi_{U_{j}(B)} (r_{B}^{-mk}L^{M-k} b)(y)|^2 {d\mu(y)\over V(x,\tau^{1/m})}{d\tau\over \tau}\right)^{1/2}\nonumber\\
=:&\sum_{\ell\in\ZZ} E^{\prime}(k, j, s, \ell).
\end{align}
Let $v_{0}\in\mathbb{Z_{+}}$ be a positive integer such that
\begin{align}\label{nu0}
8<2^{\nu_{0}+j+(\ell-1)/m}r_{B}\leq 16,\ \ & {\rm if}\ 2^{(\ell-1)/m+j}r_{B}\leq \frac{1}{8};\nonumber\\
\nu_{0}=7,\ \ & {\rm if}\ 2^{(\ell-1)/m+j}r_{B}>\frac{1}{8}.
\end{align}
Then it follows from (\ref{nu0}) that if $2^{(\ell-1)/m+j}r_{B}\leq \frac{1}{8}$,
\begin{align}\label{ti1}
\|E^{\prime}(k,j,s)\|_{L^{q}((64\cdot 2^{j}B)^{c})}
&\leq \left(\sum_{\ell\in\mathbb{Z}}\|E^{\prime}(k,j,s,\ell)\|_{L^{q}(B(x_{B}, 8\cdot2^{-(\ell-1)/m}))}^q\right)^{1/q}+\left(\sum_{\ell\in\mathbb{Z}}\sum_{\nu\geq\nu_{0}}\|E^{\prime}(k,j,s,\ell)\|_{L^{q}(U_{\nu+j}(B))}^q\right)^{1/q}\nonumber\\
&=:{\rm I}(k,j,s)+ {\rm II}(k,j,s).
\end{align}
Note that there is no term ${\rm I}(k,j,s)$ if $2^{(\ell-1)/m+j}r_{B}>\frac{1}{8}$. Besides, when $2^{(\ell-1)/m+j}r_{B}\leq\frac{1}{8}$,
similar to the procedure of dealing with the term ${\rm I}(k,j,s)$ defined by (\ref{discuss}), we can easily show that
\begin{align}\label{ti2}
{\rm I}(k,j,s)\leq C2^{-j\epsilon}\sup\limits_{R>0}\|\phi\delta_{R}F\|_{C^{\kappa_{0}+1}}.
\end{align}
By the estimates (\ref{ti1}) and (\ref{ti2}), it remains to estimate the second term ${\rm II}(k,j,s)$.
To estimate this term, we first note that (\ref{nu0}) implies that for $\tau\in [2^{-\ell},2^{-\ell+1}]$, we have $\tau^{1/m}\leq 2^{1/m}2^{-\ell/m}\leq 2^{\nu+j-2}r_{B}.$ Hence, if $d(x,y)<\tau^{1/m}$ and $x\in U_{\nu+j}(B)$, then $y\in U_{\nu+j}^{\prime\prime}(B)$, where
\begin{align*}
U_{\nu+j}^{\prime\prime}(B):=U_{\nu+j+1}(B)\cup U_{\nu+j}(B)\cup U_{\nu+j-1}(B).
\end{align*}
Then we have
\begin{eqnarray}\label{pri0}
&&\hspace{-1.2cm}\|E^{\prime}(k, j, s, \ell)\|^2_{L^2({U_{\nu+j}(B)} )}\nonumber \\
&\leq&C\int_{2^{-\ell}}^{2^{-\ell+1}}\int_{U_{\nu+j}^{\prime\prime}(B)}|F_{\tau,s}(L)
\chi_{U_{j}(B)} (r_{B}^{-mk}L^{M-k} b)(y)|^2\int_{\substack{  d(x,y)<\tau^{1/m}}}\,d\mu(x)
 {d\mu(y)\over V(y,\tau^{1/m})}{d\tau\over \tau}\nonumber\\
&\leq& C\sum_{w=-1}^{1}\int_{2^{-\ell}}^{2^{-\ell+1}}\|\chi_{U_{\nu+j+w}(B)}F_{\tau,s}(L)\chi_{U_{j}(B)} (r_{B}^{-mk}L^{M-k} b)\|_{2}^2
{d\tau\over \tau}\nonumber\\
&\leq& C\sum_{w=-1}^{1}\int_{2^{-\ell}}^{2^{-\ell+1}} \|\chi_{U_{\nu+j+w}(B)}F_{\tau,s}(L)\chi_{U_{j}(B)}
\|_{2\rightarrow 2}^2 \|r_{B}^{-mk}L^{M-k} b\|_{L^{2}(U_{j}(B))}^{2}{d\tau\over \tau}.
\end{eqnarray}
It follows from Proposition \ref{p2estimate} that there exist constants $C, c_{0}>0$ such that for
 $w\in \{-1,0,1\}$ and $\nu\geq 7$, $2^{-\ell}\leq\tau\leq 2^{-l+1}$,
\begin{align}\label{off22}
\|\chi_{U_{\nu+j+w}(B)}F_{\tau,s}(L)\chi_{U_{j}(B)}\|_{2\rightarrow 2}
\leq C2^{-\nu\kappa_{0}}(2^{\ell/m}2^{j}r_{B})^{-\kappa_{0}+\frac{n}{2}}(2^{\ell/m}r_{j,\ell})^{-c_{0}}\|\delta_{\tau^{-1}}F_{\tau,s}\|_{C^{\kappa_{0}+1}},
\end{align}
where $r_{j,\ell}={\rm min}\{2^{j}r_{B}, 2^{-\ell/m}\}$.
Note that the conditions supp$\phi\subset (1/4,1)$ and $r_{B}\leq s\leq \sqrt[m]{2}r_{B}$ implies if $2^{-\ell}\leq\tau\leq 2^{-l+1}$, then
\begin{align}\label{van0}
\|\delta_{\tau^{-1}}F_{\tau,s}\|_{C^{\kappa_{0}+1}}\leq C\min\{1, (2^{\ell/m} r_{B})^{mM}\}\sup\limits_{R>0}\|\phi\delta_{R}F\|_{C^{\kappa_{0}+1}}.
\end{align}
Combining the estimates (\ref{pri0}), (\ref{off22}), (\ref{van0}), (\ref{van1}) with (\ref{molecule}), we conclude that
\begin{eqnarray*}
&&\hspace{-1.2cm}\|E^{\prime}(k, j, s, \ell)\|_{L^2({U_{\nu+j}(B)} )}\\
&\leq& C2^{-j\epsilon}2^{-\nu\kappa_{0}}\mu(2^{j}B)^{-(\frac{1}{q}-\frac{1}{2})}(2^{\ell/m}2^{j}r_{B})^{-\kappa_{0}+\frac{n}{2}}
\min\{1, (2^{\ell/m} r_{B})^{mM-c_{0}}\}\sup\limits_{R>0}\|\phi\delta_{R}F\|_{C^{\kappa_{0}+1}}.
\end{eqnarray*}
This, in combination with the doubling condition (\ref{doubling}), yields{
\begin{align}\label{tii}
&{\rm II}(k,j,s)\nonumber\\&\leq C\left(\sum_{\ell\in\mathbb{Z}}\sum_{\nu\geq\nu_{0}}
\|E^{\prime}(k,j,s,\ell)\|_{L^{2}(U_{\nu+j}(B))}^q\mu(2^{\nu+j}B)^{q(\frac{1}{q}-\frac{1}{2})}\right)^{1/q}\nonumber\\
&\leq C2^{-j(\kappa_{0}-\frac{n}{2}+\epsilon)}\left(\sum_{\ell\in\mathbb{Z}}\sum_{\nu\geq\nu_{0}}
2^{-q\nu(\kappa_{0}-n(\frac{1}{q}-\frac{1}{2}))}(2^{\ell/m}r_{B})^{-q(\kappa_{0}-\frac{n}{2})}\min\{1, (2^{\ell/m} r_{B})^{qmM-qc_{0}}\}\right)^{1/q}\sup\limits_{R>0}\|\phi\delta_{R}F\|_{C^{\kappa_{0}+1}}\nonumber\\
&\leq  2^{-j(\kappa_0-\frac{n}{2}+\epsilon)}\left(\sum_{2^{\frac{\ell-1}{m}+j}r_B>\frac{1}{8}}(2^{\ell/m}r_B)^{-q(\kappa_0-\frac{n}{2})})\right)^{1/q}\sup\limits_{R>0}\|\phi\delta_{R}F\|_{C^{\kappa_{0}+1}}\nonumber\\
&+C2^{-j(n(\frac{1}{q}-\frac{1}{2})-\frac{n}{2}+\epsilon)}\left(\sum_{2^{\frac{\ell-1}{m}+j}r_B\leq \frac{1}{8}}
(2^{\ell/m}r_B)^{-q(n(\frac{1}{q}-\frac{1}{2})-\frac{n}{2})}\min\{1, (2^{\ell/m} r_{B})^{qmM-qc_{0}}\}\right)^{1/q}\sup\limits_{R>0}\|\phi\delta_{R}F\|_{C^{\kappa_{0}+1}}\nonumber\\
&\leq C2^{-j\epsilon}\sup\limits_{R>0}\|\phi\delta_{R}F\|_{C^{\kappa_{0}+1}},
\end{align}
}
It follows from the estimates (\ref{eddd}), (\ref{ti1}), (\ref{ti2}) and (\ref{tii})  that for $k=0,1,\cdots,M-1$,
\begin{align*}
\left\|\left(\int_{0}^{\infty}\int_{d(x,y)<\tau^{1/m}}|E_{k}(y)|^{2}\frac{d\mu(y)}{V(x,\tau^{1/m})}\frac{d\tau}{\tau}\right)^{\frac{1}{2}}\right\|_{L^{q}}
\leq C\sup\limits_{R>0}\|\phi\delta_{R}F\|_{C^{\kappa_{0}+1}}.
\end{align*}

\medskip

\noindent
{\bf Case 2.} \ $k=M$.
Similarly to the proof of estimating $E_{k}$ for $k=1,2,\cdots, M-1$ as in {\bf Case 1}, we conclude that
\begin{align*}
\left\|\left(\int_{0}^{\infty}\int_{d(x,y)<\tau^{1/m}}|E_{M}(y)|^{2}\frac{d\mu(y)}{V(x,\tau^{1/m})}\frac{d\tau}{\tau}\right)^{\frac{1}{2}}\right\|_{L^{q}}
\leq C\sup\limits_{R>0}\|\phi\delta_{R}F\|_{C^{\kappa_{0}+1}}.
\end{align*}
This finishes the proof of (\ref{goalhardy0}) and then Theorem \ref{spectralmultiplier}.
\end{proof}

\bigskip

 \noindent
 {\bf Acknowledgements:} The authors want to thank the referee for helpful comments. The authors want to thank Felipe Ponce-Vanegas for pointing out the mistakes made in the previous version. P. Chen was supported by NNSF of China 11501583, Guangdong Natural Science Foundation
2016A030313351. Duong is supported by the Australian Research Council (ARC) through the research grants  DP190100970.
Z.J. Fan was supported by International Program for Ph.D. Candidates from Sun Yat-Sen University.
J. Li is supported by the Australian Research Council (ARC) through the
research grant DP170101060 and by Macquarie University Research Seeding Grant.
 L. Yan was supported by the NNSF of China, Grant No.  ~11871480, and by the Australian Research
  Council (ARC) through the research grants  DP190100970.

\end{document}